\definecolor{darkblue}{rgb}{0,0,0.7}
\definecolor{darkgreen}{rgb}{0.01,0.75,0.24}
\newcommand\as[1]{\textcolor{darkgreen}{#1}}
 \renewcommand{\Delta}{\triangle}
\newcommand{\bee}{\bf {e}}
\def \Ee[#1]{\mathcal{E}^{\text{{#1}}}}
\def\R{\mathbb{R}}
\def\EE{\mathscr{E}} 
\def\pa[#1,#2]{\frac{\partial {#1}}{\partial {#2}} }
\def\idom[#1,#2,#3]{\int_{#1}\hspace{1pt} {#2} \hspace{1pt} \text{d}{#3}}
\def\res[#1,#2]{\left.{#1}\right|_{#2}}
\def\gt{\rightarrow}
\def\lgt{\downarrow}
\def\var[#1,#2]{\langle \delta \mathcal{E}^{\text{{#1}}}({#2}),v\rangle}
\def\vars[#1,#2,#3]{\langle \delta^2\mathcal{E}^{\text{{#1}}}({#2})v,{#3}\rangle}
\def\vard[#1,#2,#3,#4]{\langle \delta\mathcal{E}^{\text{{#1}}}({#2})-\delta\mathcal{E}^{\text{{#3}}}({#4}),v\rangle}
\def\P{\mathbb{P}}
\def\E{\mathbb{E}}
\def\N{\mathbb{N}}
\newcommand{\bSig}{\boldsymbol{\Sigma}}
\newcommand{\bLam}{\boldsymbol{\Lambda}}
\newcommand{\bbeta}{\boldsymbol{\beta}}
\newcommand{\bdeta}{\boldsymbol{\eta}}
\newcommand{\balpha}{\boldsymbol{\alpha}}
\newcommand{\bxi}{\boldsymbol{\xi}}
\newcommand{\bm}{\mathbf{m}}
\newcommand{\bA}{\mathbf{A}}
\newcommand{\A}{\mathcal{A}}
\newcommand{\M}{\mathcal{M}}
\newcommand{\eps}{\varepsilon}
\newcommand{\Tr}{\mathrm{Tr}}
\newcommand{\bI}{\mathbf{I}}
\newcommand{\bX}{\mathbf{X}}
\newcommand{\by}{\mathbf{y}}
\newcommand{\be}{\begin{equation}}
\newcommand{\en}{\end{equation}}
\newcommand{\ben}{\begin{equation*}}
\newcommand{\enn}{\end{equation*}}
\newcommand{\bea}{\begin{aligned}}
\newcommand{\ena}{\end{aligned}}
\def\ba#1\ena{\begin{align}#1\end{align}}
\def\ban#1\enan{\begin{align*}#1\end{align*}}
\numberwithin{theorem}{section}
\newcommand{\TheTitle}{Gaussian Approximations for Probability Measures on $\mathbb{R}^d$} 
\newcommand{\TheAuthors}{Yulong Lu, Andrew Stuart and Hendrik Weber}
\headers{\TheTitle}{\TheAuthors}
\title{{\TheTitle} \thanks{
\funding{YL is supported by EPSRC as part of the MASDOC DTC at the University of Warwick with grant No. EP/HO23364/1. AMS is supported by DARPA, EPSRC
and ONR. HW is supported by the Royal Society through the University Research Fellowship UF140187.}}}
\author{
  Yulong Lu\thanks{Mathematics Institute, University of Warwick, Coventry, CV4 7AL, UK
    (\email{yulong.lu@warwick.ac.uk}, \email{hendrik.weber@warwick.ac.uk}).}
  \and
  Andrew Stuart\thanks{Computing \& Mathematical Sciences, California Institute of Technology, Pasadena, CA 91125, USA (\email{astuart@caltech.edu}).}
  \and
  Hendrik Weber\footnotemark[2]
}
\begin{document}

\maketitle

\begin{abstract}
 This paper concerns the approximation of probability measures on $\mathbb{R}^d$
with respect to the Kullback-Leibler divergence.
Given an admissible target measure, we show the existence of the best
approximation, with respect to this divergence, from certain sets of
Gaussian measures and Gaussian mixtures. The asymptotic behavior of
such best approximations is then studied in the small parameter limit where the measure concentrates; this asympotic
behaviour is characterized using $\Gamma$-convergence. The theory developed
is then applied to understand the frequentist consistency of Bayesian
inverse problems in finite dimensions.
For a fixed realization of additive observational noise, 
we show the asymptotic
normality of the posterior measure in the small noise limit. Taking
into account the randomness of the noise, we prove a Bernstein-Von Mises type result for the posterior measure.
\end{abstract}

\begin{keywords}
  Gaussian approximation, Kullback-Leibler divergence, Gamma-convergence, Bernstein-Von Mises Theorem
\end{keywords}

\begin{AMS}
 60B10, 60H07, 62F15
\end{AMS}

\section{Introduction}

In this paper, we study the ``best'' approximation of a general finite 
dimensional probability measure, which could be non-Gaussian, from a set of 
simple probability measures, such as a single Gaussian measure or a Gaussian 
mixture family. We define ``best'' to mean the measure within the simple class 
which minimizes the Kullback-Leibler divergence between itself and the target 
measure. This type of approximation is central to many ideas, especially including the so-called ``variational inference'' \cite{wainwright2008graphical}, that are widely
used in machine learning \cite{B06}. Yet such approximation has not been the subject of
any substantial systematic underpinning theory. The purpose of this paper
is to develop such a theory in the concrete finite
dimensional setting in two ways: (i) by establishing the existence of
best approximations; (ii) by studying their asymptotic properties in
a measure concentration limit of interest. The abstract theory is then
applied to study frequentist consistency \cite{V00} of Bayesian inverse 
problems. 

\subsection{Background and Overview}

The idea of approximation for probability measures with respect to 
Kullback-Leibler divergence has been applied in a number of areas; 
see for example \cite{MG11,KP13,LSW16,sanz2016gaussian}. 
Despite the wide usage of Kullback-Leibler approximation, systematic
theoretical study has only been initiated recently. In \cite{PSSW15a}, the measure approximation problem is studied from 
the calculus of variations point of view, and existence of minimizers 
established therein; the companion paper \cite{PSSW15b} proposed 
numerical algorithms for implementing Kullback-Leibler minimization in practice.
In \cite{LSW16}, Gaussian approximation is used as a new approach for 
identifying the most likely path between equilibrium states in molecular 
dynamics; furthermore, the asymptotic behavior of the Gaussian approximation 
in the small temperature limit is analyzed via $\Gamma$-convergence. Here our 
interest is to develop the ideas in \cite{LSW16} in the context of a general 
class of measure approximation problems in finite dimensions.

To be concrete we consider approximation of a family of probability 
measures $\{\mu_\eps\}_{\eps > 0}$ on $\R^d$ with (Lebesgue)
density of the form
\be\label{eq:mu_eps}
\mu_\eps (dx) = \frac{1}{Z_{\mu,\eps}} \exp\left(-\frac{1}{\eps} V^\eps_1(x) - V_2(x)\right) dx;
\en
here $Z_{\mu, \eps}$ is the normalization constant. A typical example of a measure $\mu_\eps$ with this form is a posterior measure in Bayesian inverse problems. For instance, consider the inverse problem of identifying $x$ from a sequence of noisy observations $\{y_j\}_{j\in \N}$ where
$$
y_j  = G(x) + \eta_{j},
$$
and where the  $\eta_j$ denote describe the random noise terms. This may model a statistical measurement with an increasing number of observations or with vanishing noise. In the Bayesian approach to this inverse problem, if we take a prior with density proportional to $\exp(-V_2(x))$, then the posterior measure is given by \eqref{eq:mu_eps} with the function $\eps^{-1} V_1^\eps$, up to an additive constant, coinciding with the negative log-likelihood. The parameter 
$\eps$ is associated with the number of observations or the noise level of 
the  statistical experiment.

 Our study of Gaussian approximation to the measures $\mu_\eps$ in \eqref{eq:mu_eps} is partially motivated by the famous Bernstein-von Mises (BvM) theorem \cite{V00} in asymptotic statistics. Roughly speaking, the BvM theorem states that under mild conditions on the prior, the posterior distribution of a Bayesian procedure converges to a Gaussian distribution centered at any consistent estimator (for instance the maximum likelihood estimator)  in the limit of large 
data (or, relatedly, small noise \cite{BL}). The BvM theorem is of great importance in Bayesian statistics for at least two reasons. First, it gives a quantitative description of how the posterior contracts to the underlying truth. Second, it implies that the Bayesian credible sets are asymptotically equivalent to frequentist confidence intervals and hence the estimation of the latter can be realized by making use of the computational power of Markov Chain Monte Carlo algorithms. We interpret the BvM phenomenon in the abstract theoretical framework of best Gaussian approximations with respect to a Kullback-Leibler measure
of divergence.

\subsection{Main Contributions}

The main contributions of this paper are twofold:

\begin{itemize}

\item We use the calculus of variations to give a framework to the problem
of finding the best Gaussian (mixture) approximation of a given measure,
with respect to a Kullback-Liebler divergence;

\item We study the resulting calculus of variations problem in the small
noise (or large data) limits, therby making new links to, and ways
to think about, the classical Bernstein-von Mises theory of 
asymptotic normality.

\end{itemize}

We describe these contributions in more detail.
First we introduce a theoretical framework of calculus of variations to analyze the measure approximation problem. Given a measure $\mu_\eps$ defined by \cref{eq:mu_eps}, we find a 
measure $\nu_\eps$ from a set of simple measures, Gaussians or mixtures of finitely many Gaussians, which minimizes the 
Kullback-Leibler divergence $D_{\text{KL}}(\nu || \mu_\eps)$. We 
characterize the limiting behavior of the best approximation $\nu_\eps$ as well as the limiting behaviour of the Kullback-Leibler divergence as $\eps \lgt 0$ using the framework of  $\Gamma$-convergence. In particular, if $\mu_\eps$ is a multimodal distribution and $\nu_\eps$ is the best approximation from within the class of Gaussian mixtures, then  the limit of the minimized KL-divergence $D_{\text{KL}}(\nu_\eps || \mu_\eps)$ can characterized explicitly as the sum of two contributions:
a local term which consists of a weighted sum of the  KL-divergences between the Gaussian approximations, as well 
as the Gaussian measure whose covariance is determined by the Hessian of $V_2$ at its minimizers; and a global term which measures
how well the weights approximate the mass distribution between the modes; see \cref{thm:gamma-2}. 

We then adopt the abstract measure approximation theory to understanding the posterior consistency of finite dimensional Bayesian inverse problems. In particular, we give an alternative (and more analytical) proof of the Bernstein-von Mises theorem, see \cref{thm:expdkl} and \cref{cor:bvm}. We highlight the fact
that our BvM result improves classical BvM results for parametric statistical models in two aspects. Firstly, the convergence of posterior in the total variation distance is improved to convergence in the KL-divergence, under certain regularity assumptions on the forward map. Secondly, our BvM result allows the posterior distribution to be multimodal, in which case the posterior approaches a mixture of Gaussian distributions rather than a single Gaussian distribution in the limit of infinite data. These improvements come at a cost, and we need to make 
stronger assumptions than those made in classical BvM theory.

\subsection{Structure}

The rest of the paper is organized as follows. In \cref{sec:SU}
we set up various underpinning concepts which are used throughout
the paper: in \cref{subsec:dkl} and \cref{subsec:gamma}, 
we recall some basic facts on Kullback-Leibler divergence and 
$\Gamma$-convergence and in \cref{ssec:A} and \cref{ssec:N}
we spell out the assumptions made and the notation used. 
 In \cref{sec:SGaussian} and \cref{sec:gaussianmix} we \as{study}
the problem of approximation of the measure $\mu_\eps$ by, respectively, a single 
Gaussian measure and a Gaussian mixture. In particular, the small $\eps$
asymptotics of the Gaussians (or Gaussian mixtures) are captured by using 
the framework of $\Gamma$-convergence. In \cref{sec:app}, 
the theory which we have developed is applied to understand the posterior 
consistency for Bayesian inverse problems, and connections to the
BvM theory. 
Finally, we finish in \cref{sec:conclusion} with several conclusion remarks. 

\section{Set-Up}\label{sec:SU}

\subsection{Kullback-Leibler Divergence} \label{subsec:dkl}
Let $\nu$ and $\mu$ be two probability measures on $\R^d$ and assume
that $\nu$ is absolutely continuous with resepct to $\mu$. The 
Kullback-Leibler divergence, or relative entropy, of $\nu$ with respect
to $\mu$ is
$$
 D_{\text{KL}}(\nu || \mu) = \E^{\nu} \log \left(\frac{d\nu}{d \mu}\right).
$$
If $\nu$ is not absolutely continuous with respect to $\mu$, then the 
Kullback-Leibler divergence is defined as $+\infty$. By definition, 
the Kullback-Leibler divergence is non-negative but it is not a metric 
since it does not obey the triangle inequality  
and it is not symmetric in its two arguments. In this paper, we will 
consider minimizing $D_{\text{KL}}(\nu || \mu_\eps)$ with respect to $\nu$,
over a suitably chosen set of measures, 
and with $\mu_\eps$ being the target measure defined in \cref{eq:mu_eps}. 
Swapping the order of these two measures within the divergence is undesirable 
for our purposes. This is because minimizing $D_{\text{KL}}(\mu_\eps || \cdot)$
within the set of all Gaussian measures will lead to matching of moments \cite{B06}; this is inappropriate for multimodal measures where a more desirable
outcome would be the existence of multiple local minimizers at each
mode \cite{PSSW15a,PSSW15b}. 

Although the Kullback-Leibler divergence is not a metric, its information
theoretic interpretation make it natural for approximate inference.
Furthermore it is a convenient 
quantity to work with for at least two reasons. First the divergence provides 
useful upper bound for many metrics; in particular, one has the Pinsker 
inequality
\be\label{ieq:pinsker}
d_{\text{TV}} (\nu, \mu) \leq \sqrt{\frac{1}{2} D_{\text{KL}}(\nu || \mu)}
\en
where $d_{\text{TV}}$ denotes the total variation distance. Second the 
logarithmic structure of $D_{\text{KL}}(\cdot || \cdot)$ allows us to carry 
out explicit calculations, and numerical computations, which are
considerably more difficult when using the total variation distance directly. 

\subsection{$\Gamma$-convergence}\label{subsec:gamma}
We recall the definition and a basic result concerning
$\Gamma$-convergence. This is a useful tool for studying families
of minimization problems. In this paper we will use it to study
the parametric limit $\eps \to 0$ in our approximation problem.

\begin{definition}
\label{d:gcc}
Let $\mathcal{X}$ be a metric space and $E_\eps: \mathcal{X} \gt \R$ a family of functionals indexed by $\eps > 0$. Then $E_\eps$ $\Gamma$-converges to $E:\mathcal{X} \gt \R$ as $\eps \gt 0$ if the following conditions hold:

(i) (liminf inequality) for every $u \in \mathcal{X}$, and for every sequence
$u_\eps \in \mathcal{X}$ such that $u_\eps \gt u$, it holds that
$
E(u) \leq \liminf_{\eps \lgt 0} E_\eps(u_\eps)
$;

(ii) (limsup inequality) for every $u \in \mathcal{X}$ there exists a recovery sequence $\{u_\eps\}$ such that $u_\eps \gt u$ and $E(u) \geq \limsup_{\eps \lgt 0} E_\eps(u_\eps)$.

We say a sequence of functionals $\{E_\eps\}$ is compact if $\limsup_{\eps\lgt 0} E_\eps(u_\eps) < \infty$ implies that there exists a subsequence $\{u_{\eps_j}\}$ such that $u_{\eps_j} \gt u\in \mathcal{X}$. 
\end{definition}

The notion of $\Gamma$-convergence is useful because of the following fundamental theorem, which can be proved by similar methods as the proof of \cite[Theorem 1.21]{bra02a}. 

\begin{theorem}\label{thm:fgamma}
Let $u_\eps$ be a minimizer of $E_\eps$ with $\limsup_{\eps \lgt 0} E_\eps(u_\eps) < \infty$. If $E_\eps$ is compact and $\Gamma$-converges to $E$, then there exists a subsequence $u_{\eps_j}$ such that $u_{\eps_j} \gt u$ where $u$ is a minimizer of $E$.
\end{theorem}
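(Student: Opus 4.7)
The plan is to carry out the standard two-step argument for the fundamental theorem of $\Gamma$-convergence: first use compactness to extract a cluster point of the sequence of minimizers, then combine the liminf and limsup inequalities with minimality to show that this cluster point minimizes $E$.

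First I would apply the compactness hypothesis. Since $u_\eps$ is a minimizer and $\limsup_{\eps \lgt 0} E_\eps(u_\eps) < \infty$ by assumption, the compactness property guarantees the existence of a subsequence, which I relabel as $u_{\eps_j}$, converging to some $u \in \mathcal{X}$. The liminf inequality from Definition \ref{d:gcc}(i) then immediately gives
\ben
E(u) \leq \liminf_{j \to \infty} E_{\eps_j}(u_{\eps_j}).
\enn

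Next I would show $u$ is a minimizer of $E$. Pick an arbitrary $v \in \mathcal{X}$. By the limsup inequality from Definition \ref{d:gcc}(ii), there exists a recovery sequence $v_\eps \gt v$ with $\limsup_{\eps \lgt 0} E_\eps(v_\eps) \leq E(v)$. Since $u_\eps$ minimizes $E_\eps$, we have $E_\eps(u_\eps) \leq E_\eps(v_\eps)$ for every $\eps > 0$. Chaining these along the subsequence $\eps_j$ yields
\ben
E(u) \;\leq\; \liminf_{j \to \infty} E_{\eps_j}(u_{\eps_j}) \;\leq\; \liminf_{j \to \infty} E_{\eps_j}(v_{\eps_j}) \;\leq\; \limsup_{j \to \infty} E_{\eps_j}(v_{\eps_j}) \;\leq\; E(v).
\enn
Since $v$ was arbitrary, $u$ minimizes $E$ on $\mathcal{X}$.

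There is no real obstacle here: the argument is entirely definitional and requires no additional structural hypotheses beyond those already assumed. The only subtle point worth being careful about is that the subsequence used in compactness is fixed at the outset, and all subsequent liminf/limsup statements are taken along that same subsequence; this is harmless because passing to a further subsequence of a convergent sequence does not change the limit $u$. For this reason I would simply cite \cite[Theorem 1.21]{bra02a} as the authors do, noting that the proof transfers verbatim to our setting.
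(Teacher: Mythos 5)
Your argument is correct and is exactly the standard proof of the fundamental theorem of $\Gamma$-convergence that the paper itself defers to by citing \cite[Theorem 1.21]{bra02a}: extract a convergent subsequence via compactness, bound $E(u)$ from below along it with the liminf inequality, and bound it from above by $E(v)$ for arbitrary $v$ using a recovery sequence together with minimality of $u_\eps$. No gaps; your remark that passing to the subsequence is harmless is the only subtlety and you handle it correctly.
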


Thus, when this theorem applies, it tells us that minimizers of $E$ characterize
the limits of convergent subsequences of minimizers of $E_\eps$. In other
words the $\Gamma-$limit captures the behavior of the minimization problem
in the small $\eps$ limit.

\subsection{Assumptions}\label{ssec:A}

Throughout the paper, we make the following assumptions on the potential functions $V^\eps_1$ and $V_2$ which define the target measure of interest.

\begin{assumption}\label{assump}
\item[ (A-1)] For any $\eps > 0$, $V_1^\eps$ and $V_2$ are non-negative functions in the space $C^4(\R^d)$ and $C^2(\R^d)$ respectively. Moreover, there exists constants $\eps_0 > 0$ and $M_V > 0$ such that when $\eps < \eps_0$,
$$
 \left|\partial^\alpha_x V^\eps_1(x)\right| \vee 
\left|\partial^\beta_x V_2(x)\right|
  \leq M_V e^{|x|^2}
$$
 $\text{ any } |\alpha| \leq 4, |\beta| \leq 2 \text{ and all } x\in \R^d.$

\item [(A-2)] There exists $n > 0$ such that when $\eps \ll 1$, the set of minimizers of $V^\eps_1$ is
$
\EE^\eps = \{x^1_\eps, x^2_\eps, \cdots, x^n_\eps\}
$
and $V^\eps_1(x^i_\eps) = 0, i=1,\cdots, n$. 

\item [(A-3)] There exists $V_1$ such that $V^\eps_1 \gt V_1$ pointwise. The limit $V_1$ 
has $n$ distinct global minimisers which are given by $\EE =  \{x^1, x^2, \cdots, x^n\}$. For each $i =1, \ldots, n$  the Hessian $D^2 V_1(x^i)$ is positive definite. 

\item[(A-4)] The convergence $x^i_\eps \gt x^i$ holds. 

\item [(A-5)] There exist constants $c_0, c_1 > 0$ and $\eps_0 > 0$ such that when $\eps < \eps_0$,
$$
V^\eps_1(x) \geq -c_0 + c_1 |x|^2, x\in \R^d.
$$
\end{assumption}

\begin{remark}
Conditions (A-2)-(A-4) mean that for sufficiently small $\eps > 0$, the function $V^\eps_1$ behaves like a quadratic function in the neighborhood of the minimizers $x_\eps^i$ and of $x^i$. Consequently, the measure $\mu_\eps$ is asymptotically normal in the local neighborhood of $x^i_\eps$. In particular, in conjunction with Condition (A-5) this implies that there exists $\delta> 0$ and $C_{\delta} > 0$ such that $\forall\, 0 \leq \eta < \delta$,
\be\label{eq:distV}
\text{dist}(x, \EE) \geq \eta \Longrightarrow \liminf_{\eps \lgt 0} V^\eps_1(x) \geq C_{\delta} |\eta|^2.
\en

\end{remark}
\begin{remark}
The local boundedness of $V_1^\eps$ in $C^4(\R^d)$  (Assumption (A-1)) together with the pointwise convergence of $V^{\eps}_1$ to $V_1$ (Assumption (A-3)) implies the much stronger locally uniform convergence of derivatives up to order $3$.
Furthermore, (A-4) then implies that $V^\eps_1(x^i_\eps) \gt V_1(x^i)$ and $D^2 V^\eps_1(x^i_\eps) \gt D^2 V_1(x^i)$.
\end{remark}

\subsection{Notation}\label{ssec:N}

Throughout the paper, $C$ and $\tilde{C}$ will be generic constants which 
are independent of the quantities of interest, and may change from 
line to line. Let $\mathcal{S}_{\geq}(\R, d)$ and $\mathcal{S}_{>}(\R, d)$ 
be the set of all $d\times d$ real matrices which are positive semi-definite or positive definite, respectively. 
Denote by $N(m, \bSig)$ a Gaussian measure with mean $m$ and covariance matrix 
$\bSig$. We use $|\bA|$ to denote the Frobenius norm of the $d\times d$ matrix 
$\bA$, namely $|\bA| = \sqrt{\Tr(\bA^T \bA)}$. We denote by $\lambda_{\min} (\bA)$
the smallest eigenvalue of $\bA$. 
We let $B(x, r)$ denote a ball in $\R^d$ with center $x$ and radius $r$. Given 
a random variable $\eta$, we use $\E^{\eta}$ and $\P^\eta$ when computing 
the expectation and the probability under the law of $\eta$ respectively.

\section{Approximation by Single Gaussian measures}\label{sec:SGaussian}

Let $\A$ be the set of Gaussian measures on $\R^d$, given by $$\A = \{ N(m,  \bSig): m\in \R^d, \bSig\in \mathcal{S}_{\geq}(\R, d) \}.$$
The set $\A$ is closed with respect to 
weak convergence of probability measures. 
 Consider the variational problem
\be\label{prob:var}
\inf_{\nu \in \A} D_{\text{KL}}(\nu || \mu_\eps).
\en
Given $\nu = N(m, \bSig) \in \A$, the Kullback-Leibler divergence $D_{\text{KL}}(\nu || \mu_\eps)$ can be calculated explicitly as
\be\label{eq:dkl}
\bea
D_{\text{KL}}(\nu || \mu_\eps) &= \E^{\nu} \log\left(\frac{d\nu}{d\mu_\eps}\right)\\
& = \frac{1}{\eps}\E^{\nu} V^\eps_1(x) + \E^{\nu} V_2(x) - \log \sqrt{(2\pi)^d\det \bSig} -\frac{d}{2} + \log Z_{\mu, \eps}.
\ena
\en
If $\bSig$ is non-invertible then $D_{\text{KL}}(\nu || \mu_\eps) = +\infty$. The term $-\frac{d}{2}$ comes from the expectation $\E^{\nu} \frac{1}{2} (x - m)^T\bSig (x - m)$ and is independent of $\bSig$. The term $-\log \sqrt{(2\pi)^d\det \bSig}$ prevents the measure $\nu$ from being too close to a Dirac measure.
The following theorem shows that the problem \cref{prob:var} has a solution.
\begin{theorem}\label{thm:exists1}
Consider the measure $\mu_\eps$ given by \cref{eq:mu_eps}. For any $\eps > 0$, there exists at least one probability measure $\overline{\nu}_\eps \in \A$ solving the problem \cref{prob:var}.
\end{theorem}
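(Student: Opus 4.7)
The plan is to apply the direct method of the calculus of variations, with the explicit formula \cref{eq:dkl} reducing the problem to one on the finite-dimensional parameter space $\R^d \times \mathcal{S}_>(\R, d)$. To begin I would verify that the infimum is finite by evaluating $D_{\text{KL}}(\nu || \mu_\eps)$ on a concrete trial Gaussian (e.g.\ $N(0,\delta I)$ with $\delta>0$ small enough that the exponential upper bound in (A-1) keeps $\E^{\nu}V_1^\eps$ and $\E^{\nu}V_2$ finite).

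The heart of the argument is coercivity. Given a minimizing sequence $\nu_n = N(m_n,\bSig_n)$, I would combine the non-negativity of $V_1^\eps$ and $V_2$ (from (A-1)) with the quadratic lower bound in (A-5) to obtain, for sufficiently small $\eps$,
\ben
D_{\text{KL}}(\nu_n || \mu_\eps) \;\geq\; \frac{c_1}{\eps}\bigl(|m_n|^2 + \Tr \bSig_n\bigr) - \tfrac{1}{2}\log \det \bSig_n + C_\eps,
\enn
where $C_\eps$ collects the $\eps$-dependent constants coming from $c_0/\eps$, the $(2\pi)^d$ factor and $\log Z_{\mu,\eps}$. Uniform boundedness of the left side along the minimizing sequence immediately controls $|m_n|$ and $\Tr \bSig_n$, so in particular every eigenvalue of $\bSig_n$ is uniformly bounded above. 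On the other hand, discarding the two (non-negative) expectation terms from \cref{eq:dkl} produces an upper bound on $-\tfrac{1}{2}\log\det \bSig_n$, forcing $\det \bSig_n$ to stay bounded away from zero: the $-\tfrac{1}{2}\log \det$ term of the KL functional itself acts as a barrier preventing the minimizing covariance from collapsing to a Dirac mass. Coupling the trace bound with the determinant bound confines each eigenvalue of $\bSig_n$ to a compact subinterval of $(0,\infty)$, so after passing to a subsequence $(m_n,\bSig_n)\to(m^\star,\bSig^\star)$ with $\bSig^\star$ positive definite.

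For lower semicontinuity, the Gaussian densities of $\nu_n$ converge pointwise to that of $\nu^\star := N(m^\star,\bSig^\star)$, and since $V_1^\eps,V_2\geq 0$ Fatou's lemma delivers
\ben
\E^{\nu^\star} V_1^\eps \;\leq\; \liminf_n \E^{\nu_n} V_1^\eps, \qquad \E^{\nu^\star} V_2 \;\leq\; \liminf_n \E^{\nu_n} V_2.
\enn
Combined with the continuity of $\bSig \mapsto -\tfrac{1}{2}\log\det\bSig$ on $\mathcal{S}_>(\R,d)$, formula \cref{eq:dkl} then gives $D_{\text{KL}}(\nu^\star || \mu_\eps) \leq \liminf_n D_{\text{KL}}(\nu_n || \mu_\eps)$, so $\nu^\star \in \A$ attains the infimum.

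The main technical obstacle is the simultaneous two-sided control of $\bSig_n$: the upper eigenvalue bound is needed so that the Gaussian does not spread out, and the lower eigenvalue bound is needed so that the limit covariance remains invertible. These two bounds come from entirely different sources — the first from the quadratic coercivity of $V_1^\eps$ encoded in (A-5), the second from the structure of the KL functional itself — and securing them simultaneously is the real content of the proof. Everything else reduces to standard compactness, continuity of $\log\det$, and Fatou's lemma applied to the non-negative integrands $V_1^\eps$ and $V_2$.
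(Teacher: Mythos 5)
Your argument is correct in substance, but it takes a genuinely different route from the paper. The paper's proof is two lines: it checks finiteness of the infimum with the trial measure $N(0,\tfrac14 \bI_d)$ (essentially your first step) and then invokes the general fact that $\nu \mapsto D_{\text{KL}}(\nu||\mu_\eps)$ has compact sub-level sets with respect to weak convergence of probability measures, together with the weak closedness of $\A$, citing \cite[Corollary 2.2]{PSSW15a}. You instead run the direct method explicitly in the parameter space $\R^d\times\mathcal{S}_{>}(\R,d)$: coercivity in $(m_n,\bSig_n)$ from Assumption (A-5) plus the $-\tfrac12\log\det$ barrier, then Fatou and continuity of $\log\det$ for lower semicontinuity. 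What your version buys is a self-contained, elementary proof with quantitative two-sided eigenvalue control — in fact your compactness step is essentially the fixed-$\eps$ analogue of the paper's own \cref{lem:compactness}, which the paper only develops later for the rescaled functional $F_\eps$. What the paper's version buys is robustness: the abstract compactness of KL sub-level sets holds for \emph{any} fixed reference probability measure and needs no structure on $V_1^\eps$ beyond normalizability.

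That robustness matters for one point you should patch: \cref{thm:exists1} asserts existence for \emph{every} $\eps>0$, but Assumption (A-5) only provides the quadratic lower bound $V_1^\eps(x)\geq -c_0+c_1|x|^2$ for $\eps<\eps_0$, so your coercivity estimate — and hence your upper bounds on $|m_n|$ and $\Tr\bSig_n$ — is only available in that regime. For $\eps\geq\eps_0$ you would need a different tightness argument, e.g.\ the standard one showing that a uniform bound on $D_{\text{KL}}(\nu_n||\mu_\eps)$ forces $\sup_n\nu_n(B(0,R)^c)\to 0$ as $R\to\infty$ because $\mu_\eps$ is itself tight (which is exactly the content of the compact-sub-level-set result the paper cites). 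With that substitution, or with the theorem read in the small-$\eps$ regime the paper actually cares about, your proof is complete.
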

\begin{proof}
We first show that the infimum of \cref{prob:var} is finite. 
 In fact, consider $\nu^\ast = N(0,  \frac14 \bI_d)$. Under the \cref{assump} (A-1) we have that
$$
\E^{\nu^\ast} V^\eps_1(x) \vee \E^{\nu^\ast} V_2(x) \leq \frac{M_V}{\sqrt{(2\pi \times \frac{1}{4})^d }} \int_{\R^d} e^{-\frac{4}{2 } |x|^2 + |x|^2} dx < \infty.
$$
Note that the integral in the last expression is finite due to $-\frac{4}{2} +1<0$.
Hence we know from \cref{eq:dkl} that $\inf_{\nu \in \A} D_{\text{KL}}(\nu || \mu_\eps)
< \infty$.  
Then the existence of minimizers follows from the fact that the Kullback-Leibler divergence has compact sub-level sets and the closedness of $\A$ with respect to weak convergence of probability measures; see e.g.  \cite[Corollary 2.2]{PSSW15a}. 
\end{proof}
We aim  to understand the asymptotic behavior of the minimizers $\overline{\nu}_\eps$  of the problem \cref{prob:var} as
 $\eps \lgt 0$. Due to the factor $\frac{1}{\eps}$ in front of $V_1^{\eps}$ in the definition of $\mu_\eps$,   \cref{eq:mu_eps}, we expect the typical size of fluctuations around the minimizers to be of order $\sqrt\eps$
 and we reflect that in our choice of scaling. More precisely, for $m \in \R^d$, $\bSig \in \mathcal{S}_{\geq}(\R, d)$ we define $\nu_\eps = N(m, \eps \bSig)$ and set
\be\label{eq:Feps0}
F_\eps(m, \bSig) :=  D_{\text{KL}}( \nu_\eps  || \mu_\eps).
\en
Understanding the asymptotic behavior of minimizers $\overline{\nu}_\eps$ in
the small $\eps$ limit may be achieved by understanding $\Gamma$-convergence
of the functional $F_\eps$.

To that end, we define weights
$$
\beta^i = \left(\det D^2V_1(x^i)\right)^{-\frac{1}{2}}\cdot e^{-V_2(x^i)}, \qquad i = 1,\cdots, n,
$$
and the counting probability measure on $\{1, \ldots, n\}$ given by 
$$
\bbeta := \frac{1}{\sum_{j=1}^n \beta^j} (\beta^1, \cdots, \beta^n).
$$ 
Intuitively, as $\eps \lgt 0$, we expect the measure $\mu_\eps$ to concentrate 
on the set $\{x^i\}$ with weights on each $x^i$ given by
$\bbeta$; this intuition is reflected in the asymptotic behavior  of the normalization constant $Z_{\mu, \eps}$, as we now show. 
By definition,
$$
Z_{\mu, \eps} = \int_{\R^d} \exp\left(-\frac{1}{\eps} V^\eps_1(x) - V_2(x)\right) dx.
$$
 The following lemma follows from the Laplace approximation for integrals (see e.g. \cite{Jensen}) and \cref{assump} (A-4). 

\begin{lemma}\label{lem:normconst}
Let $V^\eps_1$ and $V_2$ satisfy \cref{assump}. Then as $\eps \lgt 0$,
\be
Z_{\mu, \eps} = \sqrt{(2\pi\eps)^d} \cdot \left(\sum_{i= 1}^n \beta^i \right) \cdot \left( 1+ o(1)\right).
\en
\end{lemma}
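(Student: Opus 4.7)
This is a standard Laplace asymptotic evaluation of the normalization constant, and my strategy would be to localize the integral around the minimizers $x^i_\eps$ of $V^\eps_1$ (where the integrand is concentrated), then to perform a diffusive rescaling $x = x^i_\eps + \sqrt{\eps}\, y$ and apply a dominated-convergence argument.

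First I would fix a small $\delta > 0$ (smaller than half the minimal pairwise distance between the limit minimizers $x^i$) and decompose
\[
Z_{\mu,\eps} = \sum_{i=1}^n \int_{B(x^i_\eps,\delta)} e^{-\frac{1}{\eps} V^\eps_1(x) - V_2(x)}\, dx + \int_{\R^d \setminus \bigcup_i B(x^i_\eps,\delta)} e^{-\frac{1}{\eps} V^\eps_1(x) - V_2(x)}\, dx.
\]
For the far-field piece, the distance estimate \cref{eq:distV} produces a constant $c > 0$ with $V^\eps_1(x) \geq c$ outside $\bigcup_i B(x^i_\eps,\delta)$ for $\eps$ small. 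Splitting the exponent as $\frac{1}{\eps} V^\eps_1 = \frac{1}{2\eps} V^\eps_1 + \frac{1}{2\eps} V^\eps_1$ and using the quadratic coercivity in (A-5) on the second half, the far-field integral is bounded by $e^{-c/(2\eps)} \int_{\R^d} e^{-\frac{1}{2\eps}(-c_0 + c_1|x|^2)}\,dx = O(\eps^{d/2}\,e^{-c/(3\eps)})$, which is negligible compared to the expected $\eps^{d/2}$ order.

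For each local integral I would change variables $x = x^i_\eps + \sqrt{\eps}\, y$, obtaining
\[
\int_{B(x^i_\eps,\delta)} e^{-\frac{1}{\eps} V^\eps_1(x) - V_2(x)}\, dx = \eps^{d/2} \int_{B(0,\delta/\sqrt\eps)} \exp\!\left( -\tfrac{1}{\eps} V^\eps_1(x^i_\eps + \sqrt\eps y) - V_2(x^i_\eps + \sqrt\eps y)\right) dy.
\]
Taylor expanding around $x^i_\eps$, and using $V^\eps_1(x^i_\eps) = 0$ and $\nabla V^\eps_1(x^i_\eps) = 0$ from (A-2), gives
\[
\tfrac{1}{\eps} V^\eps_1(x^i_\eps + \sqrt\eps y) = \tfrac{1}{2} y^T D^2 V^\eps_1(x^i_\eps)\, y + R^i_\eps(y),
\]
where $|R^i_\eps(y)| \leq C\sqrt{\eps}\,|y|^3$ on $|y|\leq \delta/\sqrt\eps$ by the uniform $C^3$ control from the Remark. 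Combined with the continuity of $V_2$ and the convergence $D^2 V^\eps_1(x^i_\eps) \to D^2 V_1(x^i)$ (also from the Remark, together with (A-4)), the integrand converges pointwise to $\exp(-\tfrac12 y^T D^2 V_1(x^i)\, y - V_2(x^i))$. Evaluating the Gaussian integral then produces $(2\pi)^{d/2} (\det D^2 V_1(x^i))^{-1/2} e^{-V_2(x^i)} = (2\pi)^{d/2} \beta^i$, and summing over $i$ yields the claim.

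The main technical obstacle is justifying the passage to the limit under the integral, because the quadratic form depends on $\eps$ and the Taylor remainder is only controlled on shrinking sets. I would handle this by splitting the $y$-integration at the scale $|y| = \eps^{-1/4}$: on $|y|\leq \eps^{-1/4}$ the cubic remainder is bounded by $C\eps^{1/4}|y|^2$, so for $\eps$ small it is absorbed into (half of) the non-degenerate quadratic form defined by $D^2 V_1(x^i)$, producing a fixed Gaussian $L^1$ majorant and allowing dominated convergence. The intermediate annulus $\eps^{-1/4} \leq |y| \leq \delta/\sqrt\eps$ is disposed of exactly as the far-field estimate above, using (A-5) after rescaling. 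This gives the desired asymptotic equivalence with error $o(1)$ relative to the leading $\sqrt{(2\pi\eps)^d}\sum_i \beta^i$.
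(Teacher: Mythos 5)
Your proposal follows exactly the route the paper intends: the paper does not write out a proof of this lemma at all, simply invoking ``the Laplace approximation for integrals'' together with (A-4), and your argument is a correct and essentially complete implementation of that method. The localization around the $x^i_\eps$, the $\sqrt{\eps}$ rescaling, the cubic-remainder control via the uniform local $C^3$ bounds coming from (A-1), and the dominated-convergence argument with the cut at $|y|=\eps^{-1/4}$ are all sound, and the local pieces do produce $(2\pi\eps)^{d/2}\beta^i(1+o(1))$.

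One quantitative slip in the far-field estimate: after splitting the exponent as $\tfrac{1}{2\eps}V_1^\eps+\tfrac{1}{2\eps}V_1^\eps$ and using $V_1^\eps\ge c$ on one half and (A-5) on the other, the bound you actually get is of order $e^{-c/(2\eps)}\,e^{c_0/(2\eps)}\,\eps^{d/2}$, and this is \emph{not} $O(\eps^{d/2}e^{-c/(3\eps)})$ unless $c_0<c/3$. Since $c$ is on the order of $C_\delta\delta^2$ (small when $\delta$ is small) while $c_0$ in (A-5) is an arbitrary fixed constant, the equal-weight splitting can fail. The repair is immediate: either split the region rather than the exponent --- on a fixed large ball $\{|x|\le R\}$ minus the $\delta$-neighborhoods use $V_1^\eps\ge c$ and the finite volume, and on $\{|x|>R\}$ with $R^2\ge 2c_0/c_1$ use (A-5) directly, which gives $-c_0+c_1|x|^2\ge \tfrac{c_1}{2}|x|^2$ --- or use an unequal splitting $\theta V_1^\eps+(1-\theta)V_1^\eps$ with $\theta$ close enough to $1$ that $\theta c>(1-\theta)c_0$. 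A further cosmetic point: \cref{eq:distV} is stated as a pointwise $\liminf$ in $\eps$ for each fixed $x$, so strictly speaking you need the locally uniform convergence of $V_1^\eps$ (noted in the paper's second remark after \cref{assump}) to upgrade it to a lower bound that is uniform over the compact part of the far-field region; the paper uses \cref{eq:distV} in the same uniform way elsewhere, so this is consistent with its conventions. With these small repairs the argument is complete.
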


Recall from \eqref{eq:Feps0} that $F_\eps(m, \bSig)  = D_{\text{KL}}(\nu_\eps || \mu_\eps)$  with the specific scaling $\nu_\eps = N(m, \eps \bSig)$. In view of the expression \eqref{eq:dkl} for the Kullback-Leibler divergence, it follows from \cref{lem:normconst} that 
\be\label{eq:dkl_asym1}
F_\eps(m, \bSig) = \frac{1}{\eps}\E^{\nu_\eps} V^\eps_1(x) + \E^{\nu_\eps} V_2(x)  -\frac{d}{2}  - \frac{1}{2}\log\left(\det \bSig\right)+ \log \left(\sum_{i=1}^n \beta^i\right) + o(1).
\en

Armed with this analysis of the normalization constant we
may now prove the following theorem which identifies the $\Gamma$-limit of $F_\eps$.
To this end we define 
$$F_0(m,\bSig) := V_2(m) + \frac{1}{2}\Tr\left(D^2V_1(m)\cdot \bSig\right)-\frac{d}{2}- \frac{1}{2}\log\det\bSig  + \log \left(\sum_{i=1}^n \beta^i\right).$$

\begin{theorem}\label{thm:gamma}
The $\Gamma$-limit of $F_\eps$ is 
\be\bea\label{eq:F}
F(m, \bSig) := \begin{cases}
F_0(m,\bSig)
& \text{ if } m \in \EE \text{ and } \bSig \in \mathcal{S}_{>}(\R, d) ,\\
\infty & \text{ otherwise}.
\end{cases}
\ena
\en
\end{theorem}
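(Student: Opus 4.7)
The plan is to build on the expansion \cref{eq:dkl_asym1}, which already isolates the $\eps$-dependent content of $F_\eps$ as the three terms $\frac{1}{\eps}\E^{\nu_\eps} V_1^\eps$, $\E^{\nu_\eps} V_2$, and $-\frac{1}{2}\log\det\bSig_\eps$, modulo an explicit $o(1)$ supplied by \cref{lem:normconst}. I would prove the $\Gamma$-liminf inequality and produce a recovery sequence separately, first dispatching the two degenerate cases where $F = \infty$ by showing that $F_\eps \to \infty$ along the sequence, and then reducing the nontrivial case $m = x^i \in \EE$, $\bSig \in \mathcal{S}_{>}(\R, d)$ to a quantitative Taylor expansion of $V_1^\eps$ around the nearby minimiser $x^i_\eps$.

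\textbf{Recovery sequence.} Given $(x^i, \bSig) \in \EE \times \mathcal{S}_{>}(\R, d)$, I would set $m_\eps = x^i_\eps$ and $\bSig_\eps = \bSig$. Because $V_1^\eps(x^i_\eps) = 0$ and $\nabla V_1^\eps(x^i_\eps) = 0$ by (A-2), the integral form of Taylor's theorem applied with increment $\sqrt{\eps}\,\bSig^{1/2} z$ gives
\[
\frac{1}{\eps} V_1^\eps\bigl(x^i_\eps + \sqrt{\eps}\,\bSig^{1/2} z\bigr) = \int_0^1 (1-t)\, z^T \bSig^{1/2} D^2 V_1^\eps\bigl(x^i_\eps + t\sqrt{\eps}\,\bSig^{1/2} z\bigr) \bSig^{1/2} z \, dt.
\]
For each fixed $z$, the integrand converges pointwise to $(1-t)\, z^T \bSig^{1/2} D^2 V_1(x^i) \bSig^{1/2} z$ by (A-4) together with the locally uniform convergence of the Hessians noted in the second remark following \cref{assump}. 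The growth bound $|D^2 V_1^\eps(y)| \leq M_V e^{|y|^2}$ from (A-1) dominates the integrand by $C|z|^2 e^{C'\eps|z|^2}$, which is integrable against the standard Gaussian density uniformly in $\eps$ as soon as $\eps \|\bSig\|$ is small enough. Dominated convergence then yields $\frac{1}{\eps} \E^{\nu_\eps} V_1^\eps \to \frac{1}{2} \Tr(D^2 V_1(x^i)\bSig)$; an analogous domination argument for the continuous $V_2$ gives $\E^{\nu_\eps} V_2 \to V_2(x^i)$; and $-\frac{1}{2}\log\det \bSig$ is unchanged. Combining these in \cref{eq:dkl_asym1} produces $F_\eps(x^i_\eps, \bSig) \to F(x^i, \bSig)$.

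\textbf{Liminf inequality.} Fix $(m, \bSig)$ and any sequence $(m_\eps, \bSig_\eps) \to (m, \bSig)$. If $\bSig$ is singular, $-\frac{1}{2}\log\det \bSig_\eps \to +\infty$ while the remaining terms stay bounded below using $V_1^\eps, V_2 \geq 0$ from (A-1), so $\liminf F_\eps = +\infty$. If $\bSig \succ 0$ but $m \notin \EE$, then on a ball $B(m, r)$ disjoint from $\EE$ the locally uniform convergence $V_1^\eps \to V_1$ combined with the strict positivity of $V_1$ off $\EE$ gives $V_1^\eps \geq c_*/2$ on $B(m, r)$ for small $\eps$; since $\nu_\eps(B(m,r)) \to 1$, this forces $\frac{1}{\eps}\E^{\nu_\eps} V_1^\eps \to +\infty$. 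The genuine case is $m = x^i \in \EE$, $\bSig \succ 0$. Writing $A := D^2 V_1(x^i)$, for every $\eta > 0$ the locally uniform convergence of the Hessians yields, for a sufficiently small $r > 0$ and all small $\eps$,
\[
V_1^\eps(x) \geq \tfrac{1}{2}(x - x^i_\eps)^T (A - 2\eta I)(x - x^i_\eps) \qquad \text{for all } x \in B(x^i, r).
\]
Integrating against $\nu_\eps = N(m_\eps, \eps \bSig_\eps)$, using $V_1^\eps \geq 0$ on $B(x^i, r)^c$, and computing the Gaussian second moment gives
\[
\frac{1}{\eps}\E^{\nu_\eps} V_1^\eps \geq \frac{1}{2\eps}(m_\eps - x^i_\eps)^T (A - 2\eta I)(m_\eps - x^i_\eps) + \frac{1}{2}\Tr\bigl((A - 2\eta I)\bSig_\eps\bigr) - R_\eps,
\]
where the tail correction $R_\eps$ is controlled by the Gaussian estimate $\nu_\eps(B(x^i, r)^c) \leq e^{-c/\eps}$ (times a polynomial factor) and is therefore $o(1)$. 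For $\eta$ small enough that $A - 2\eta I \succ 0$, the first term is nonnegative, so taking $\liminf_\eps$ and then $\eta \downarrow 0$ produces $\liminf \frac{1}{\eps}\E^{\nu_\eps} V_1^\eps \geq \frac{1}{2}\Tr(A\bSig)$. Combined with $\E^{\nu_\eps} V_2 \to V_2(x^i)$ and the continuity of $-\frac{1}{2}\log\det$, this closes the liminf.

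\textbf{Main obstacle.} The delicate step is the liminf in the main case: one must simultaneously (i) localise to a neighbourhood of $x^i$ on which the Taylor lower bound is uniform in $\eps$, (ii) estimate the tail $\int_{B(x^i, r)^c} V_1^\eps \, d\nu_\eps$ without losing the leading constant $\frac{1}{2}\Tr(A\bSig)$, and (iii) absorb the Hessian approximation error through the $\eta$-slack device, exploiting positive semi-definiteness to discard the unknown $\frac{1}{\eps}|m_\eps - x^i_\eps|^2$-scale contribution (which could be $O(1)$ or even diverge, but is always nonnegative). Once this localisation-plus-slack packaging is set up, the remaining steps are routine convergence of Gaussian integrals.
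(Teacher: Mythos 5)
Your proof is correct, and your recovery sequence ($m_\eps = x^i_\eps$, $\bSig_\eps = \bSig$) is exactly the one the paper uses; the core computation identifying $\frac{1}{2}\Tr(D^2V_1(x^i)\bSig)$ as the limit of $\frac{1}{\eps}\E^{\nu_\eps}V_1^\eps$ is also the same. Where you genuinely diverge is in the architecture of the liminf inequality. The paper first proves a standalone compactness result (\cref{lem:compactness}): any sequence with $\limsup_\eps F_\eps(m_\eps,\bSig_\eps)<\infty$ has $\mathrm{dist}(m_\eps,\EE)\to 0$ and eigenvalues of $\bSig_\eps$ bounded above and away from zero; this rules out the degenerate limits at a stroke, and the liminf then follows from the two-sided quantitative expansion of \cref{lem:dkl-asym2} (remainder $O(\eps)$) by discarding the nonnegative term $V_1^\eps(m_\eps)/\eps$. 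You instead exploit the fact that in the $\Gamma$-liminf the sequence is already assumed convergent, so you can dispatch the degenerate cases ($\bSig$ singular, $m\notin\EE$) by direct blow-up along that sequence, and in the main case you use a one-sided Hessian bound with an $\eta$-slack plus dominated convergence, never needing two-sided error control. Your route is lighter and more self-contained for \cref{thm:gamma} alone. The paper's heavier lemmas are not wasted, however: the compactness statement is precisely what is needed for \cref{cor:covmin} via \cref{thm:fgamma} (convergence of minimizers requires compactness of the family $\{F_\eps\}$, which your blow-up arguments do not supply, since they presuppose a convergent sequence), and the quantitative $O(\eps)$ remainder of \cref{lem:dkl-asym2} is reused later to obtain the Bernstein--von Mises rate in \cref{thm:expdkl}. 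So if you adopted your argument in the paper you would still have to prove those two lemmas for the subsequent results.
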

The following corollary follows directly from the $\Gamma$-convergence of $F_\eps$. 
\begin{corollary}\label{cor:covmin}
Let $\{(m_\eps, \bSig_\eps)\}$ be a family of minimizers of $\{F_\eps\}$. 
Then there exists a subsequence $\{\eps_k\}$ such that $(m_{\eps_k}, \bSig_{\eps_k}) \gt (m, \bSig)$ and $F_{\eps_k} (m_{\eps_k}, \bSig_{\eps_k}) \gt F(m, \bSig)$. Moreover, $(m, \bSig)$ is a minimizer of $F$.
\end{corollary}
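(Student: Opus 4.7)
The plan is to invoke the fundamental theorem of $\Gamma$-convergence (\cref{thm:fgamma}) applied to the family $\{F_\eps\}$, whose $\Gamma$-limit is identified as $F$ in \cref{thm:gamma}. Two ingredients are required: (i) compactness of the family $\{F_\eps\}$ in the sense of \cref{d:gcc}; and (ii) a uniform finite upper bound $\limsup_{\eps\lgt 0} F_\eps(m_\eps,\bSig_\eps) < \infty$ along the given sequence of minimizers.

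For the upper bound, I would fix any admissible point in the domain of $F$, say $(m^\star,\bSig^\star) := (x^1, (D^2 V_1(x^1))^{-1}) \in \EE \times \mathcal{S}_{>}(\R,d)$, so that $F(m^\star,\bSig^\star) < \infty$. By the limsup inequality in \cref{thm:gamma} there exists a recovery sequence $(m_\eps^\star,\bSig_\eps^\star) \gt (m^\star,\bSig^\star)$ with $\limsup_{\eps \lgt 0} F_\eps(m_\eps^\star,\bSig_\eps^\star) \leq F(m^\star,\bSig^\star) < \infty$. Since $(m_\eps,\bSig_\eps)$ minimizes $F_\eps$, one has $F_\eps(m_\eps,\bSig_\eps) \leq F_\eps(m_\eps^\star,\bSig_\eps^\star)$ for every $\eps$, which yields $\limsup_{\eps\lgt 0} F_\eps(m_\eps,\bSig_\eps) < \infty$.

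For compactness, I would read it off from the expansion \eqref{eq:dkl_asym1}: the contribution $\frac{1}{\eps} \E^{\nu_\eps} V_1^\eps(x)$ dominates $c_1(|m|^2 + \eps\,\Tr(\bSig))/\eps - c_0/\eps$ by \cref{assump}\,(A-5), which (together with $V_2 \geq 0$ and a standard truncation to deal with the $-c_0/\eps$ term using the recovery bound above) forces $|m_\eps|$ and $|\bSig_\eps|$ to stay bounded. At the same time, the entropy penalty $-\tfrac{1}{2}\log\det\bSig$ prevents $\bSig_\eps$ from degenerating, so its smallest eigenvalue is bounded below uniformly. Hence a subsequence $(m_{\eps_k},\bSig_{\eps_k})$ converges to a limit $(m,\bSig) \in \R^d \times \mathcal{S}_>(\R,d)$.

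With compactness and the finite limsup in hand, \cref{thm:fgamma} directly delivers that the limit $(m,\bSig)$ is a minimizer of $F$. For the convergence of values $F_{\eps_k}(m_{\eps_k},\bSig_{\eps_k}) \gt F(m,\bSig)$, the liminf inequality of \cref{thm:gamma} gives $F(m,\bSig) \leq \liminf_{k} F_{\eps_k}(m_{\eps_k},\bSig_{\eps_k})$, while a recovery sequence at $(m,\bSig)$ combined with the minimization property of $(m_\eps,\bSig_\eps)$ gives the matching limsup bound. The only mildly delicate step is compactness — specifically, handling the $-c_0/\eps$ constant in the lower bound from (A-5), which is why one needs to compare against the recovery sequence produced above.
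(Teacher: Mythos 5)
Your overall architecture is exactly the paper's: establish a uniform finite upper bound along the minimizing family by comparison with an explicit competitor near $x^1$, invoke compactness, and then conclude via \cref{thm:fgamma} together with the $\Gamma$-convergence of \cref{thm:gamma}. The upper-bound step is fine (the paper uses the competitor $(x^1_\eps, D^2V_1^\eps(x^1_\eps))$ and formula \cref{eq:dkl-asym22}; your recovery sequence at $(x^1,(D^2V_1(x^1))^{-1})$ accomplishes the same thing). The gap is in your compactness sketch. From \cref{assump} (A-5) and \cref{eq:dkl_asym1} one obtains, after absorbing the log-det term via $\det\bSig \leq (\Tr\bSig/d)^d$, only the inequality $c_1\Tr(\bSig_\eps) - \tfrac{d}{2}\log\Tr(\bSig_\eps) \leq M + c_0/\eps$, i.e.\ $\Tr(\bSig_\eps) = \mathcal{O}(1/\eps)$, not $\mathcal{O}(1)$. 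Your proposed fix --- comparing against the recovery sequence --- only yields the $\mathcal{O}(1)$ bound $F_\eps(m_\eps,\bSig_\eps)\leq M$, which cannot absorb the diverging $-c_0/\eps$ contribution; so boundedness of $\bSig_\eps$ does not follow from (A-5) plus the entropy penalty alone. (Boundedness of $|m_\eps|$ does follow this way, but that is the easy half.)

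The missing idea is the concentration argument in \cref{lem:compactness} (Steps 2--3), which the paper simply cites at this point. There, the finite limsup is first converted into the bound $\E^{\nu_\eps}V_1^\eps \leq \eps\,(C_3+C_4\log\det\bSig_\eps)$, which together with \cref{eq:distV} forces $\nu_\eps$ to place mass at least $1/(2n)$ in a ball of radius $\mathcal{O}\bigl(\sqrt{\eps(R_1+R_2\log\det\bSig_\eps)}\bigr)$ around some $x^i$; comparing this with the explicit Gaussian density of $N(m_\eps,\eps\bSig_\eps)$ then yields the upper bound on $\det\bSig_\eps$, the lower bound on each eigenvalue (hence $\Tr(\bSig_\eps)\leq C$ and $\lambda_{\min}(\bSig_\eps)\geq c>0$), and $\mathrm{dist}(m_\eps,\EE)\to 0$. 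If you replace your heuristic compactness paragraph by a citation of \cref{lem:compactness} (or reproduce its argument), the rest of your proof is correct and coincides with the paper's.
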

Before we give the proof of \cref{thm:gamma}, let us first discuss the limit functional $F$ as well as its minimization. 
We assume that $m = x^{i_0}$ for some $i_0 \in \{ 1, \ldots, n\}$ and  rewrite the definition of $F_0(x^{i_0}, \bSig)$,  by adding and subtracting $\log (\beta^{i_0}) = -V_2(x^{i_0}) -\frac{1}{2} \log \left( \left(\det D^2V_1(x^{i_0})\right)\right)$ and cancelling the terms involving $V_2(x^{i_0})$ as
\be\label{eq:F0-1}
\bea
 F_0(x^{i_0}, \bSig) &= \frac{1}{2}\Tr\left(D^2V_1(x^{i_0})\cdot \bSig\right)- \frac{d}{2} - \frac{1}{2}\log\det(D^2V_1(x^{i_0})\cdot \bSig) \\
 & + \log \left(\sum_{i=1}^n \beta^i\right) - \log \left( \beta^{i_0}\right).
\ena
\en
Now it is interesting to see that the first line of \cref{eq:F0-1} gives the Kullback-Leibler divergence
$
D_{\text{KL}}\left( N(x^{i_0}, \bSig) \ ||\ N(x^{i_0}, (D^2 V_1(x^{i_0}))^{-1})\right).
$
The second line of \cref{eq:F0-1} is equal to the Kullback-Leibler divergence $D_{\text{KL}}(\bee^{i_0}\ ||\ \bbeta ),$ 
 for $\bee^{i_0}:=(0, \cdots, 1, \cdots, 0)$.
In conclusion, 
\be\label{eq:F0-2}
F_0(x^i, \bSig)  = D_{\text{KL}}\left( N(x^i, \bSig) \ ||\ N(x^i, (D^2 V_1(x^i))^{-1})\right) + D_{\text{KL}}(\bee^i\ ||\ \bbeta ),
\en
in other words, in the limit $\eps \lgt 0$, the Kullback-Leibler divergence between the best Gaussian measure $\nu_\eps$ and the measure $\mu_\eps$ consists of two parts: the first part is the relative entropy between the Gaussian measure with rescaled covariance $\bSig$ and the Gaussian measure with covariance determined by $(D^2 V_1(x^i))^{-1}$; the second part is the relative entropy between the Dirac mass supported at $x^i$ and a weighted sum of Dirac masses, with weights
$\bbeta$, at the $\{x^j\}_{j=1}^n$. Clearly, to minimize $F_0(m, \bSig)$, on the one hand, we need to choose $m = x^i$ and $\bSig = (D^2 V_1(x^i))^{-1}$ for some $i\in {1, \cdots, n}$; for this choice the first term on the right side of \cref{eq:F0-1} vanishes. In order to minimize the second term we need to choose the minimum $x^i$ 
with maximal weight $\beta^i$.
%
In particular, the following corollary holds. 

\begin{corollary}\label{cor:sgaussian}
The minimum of $F_0$ is zero when $n = 1$, but it  is strictly positive when $n > 1$. 
\end{corollary}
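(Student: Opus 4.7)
The plan is to read off both claims directly from the decomposition \eqref{eq:F0-2}, which expresses $F_0(x^i,\bSig)$ as a sum of two non-negative KL divergences. Since $F$ is infinite unless $m \in \EE$ and $\bSig$ is positive definite, the infimum of $F$ equals the infimum of $F_0(x^i,\bSig)$ over $i \in \{1,\ldots,n\}$ and $\bSig \in \mathcal{S}_{>}(\R,d)$.

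For the case $n=1$ I would observe that $\bbeta = \bee^1 = (1)$, so the second KL divergence in \eqref{eq:F0-2} vanishes identically. Taking $\bSig = (D^2 V_1(x^1))^{-1} \in \mathcal{S}_{>}(\R,d)$ (which is admissible by \cref{assump} (A-3)) makes the first KL divergence vanish as well, yielding $F_0(x^1,(D^2V_1(x^1))^{-1}) = 0$, so the minimum is zero.

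For the case $n > 1$ I would argue that the second KL term in \eqref{eq:F0-2} is uniformly bounded below by a positive constant. Indeed, since each $\beta^i > 0$ by positive-definiteness of $D^2 V_1(x^i)$, the normalized weight $\bbeta^i = \beta^i/\sum_j \beta^j$ lies strictly in $(0,1)$ for every $i$, and a direct computation gives
\[
D_{\text{KL}}(\bee^i \,\|\, \bbeta) \;=\; -\log \bbeta^i \;>\; 0.
\]
Hence for every admissible $(x^i,\bSig)$ one has $F_0(x^i,\bSig) \geq -\log \bbeta^i \geq -\log\bigl(\max_j \bbeta^j\bigr) > 0$, and this lower bound is attained by choosing $i$ to maximize $\bbeta^i$ and $\bSig = (D^2 V_1(x^i))^{-1}$, so the minimum is strictly positive.

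There is no real obstacle here: the only thing to verify carefully is that $\bbeta$ has full support on $\{1,\ldots,n\}$, which is immediate from $\beta^i = (\det D^2 V_1(x^i))^{-1/2} e^{-V_2(x^i)} > 0$ under \cref{assump}, and that the choice $\bSig = (D^2V_1(x^i))^{-1}$ is indeed positive definite so that $(x^i,\bSig)$ lies in the effective domain of $F$.
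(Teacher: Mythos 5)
Your proposal is correct and matches the paper's own argument: the paper likewise derives the corollary directly from the decomposition \cref{eq:F0-2}, choosing $\bSig = (D^2V_1(x^i))^{-1}$ to kill the first KL term and observing that the second term $D_{\text{KL}}(\bee^i\,\|\,\bbeta) = -\log\bbeta^i$ vanishes only when $n=1$ and is strictly positive otherwise, minimized by taking the mode with maximal weight. Your explicit check that $\bbeta$ has full support is a worthwhile detail the paper leaves implicit.
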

\cref{cor:sgaussian} reflects the fact that, in the limit $\eps \lgt 0$, a single Gaussian measure is not the best choice for approximating a non-Gaussian measure with
multiple modes; this motivates our study of Gaussian mixtures
in \cref{sec:gaussianmix}.

The proofs of \cref{thm:gamma} and \cref{cor:covmin} are provided after establishing a sequence of lemmas. 
The following lemma shows that the sequence of functionals $\{F_\eps\}$ is compact (recall Definition \cref{d:gcc}). It is well known that the Kullback-Leibler divergence (with respect to a fixed reference $\mu$) has compact sub-level sets with respect to weak convergence of probability measures. Here we prove a stronger statement, which is specific to the family of reference measures $\mu_\eps$, namely a uniform bound from above and below for the rescaled covariances, i.e. we prove a bound from above and below for $\bSig_\eps$ if we control $F_\eps(m_\eps, \bSig_\eps)$. 
 
\begin{lemma}\label{lem:compactness} Let $\{(m_\eps, \bSig_\eps)\} \subset \R^d \times \mathcal{S}_{\geq} (\R, d)$ be such that $\limsup_{\eps \lgt 0} F_\eps (m_\eps, \bSig_\eps) < \infty$. Then 
\be\label{eq:compactness}
0 < \liminf_{\eps \lgt 0}\lambda_{\min}(\bSig_\eps) < \limsup_{\eps \lgt 0}\Tr(\bSig_\eps) < \infty
\en
and $\text{dist}(m_\eps, \EE) \lgt 0$ as $\eps \lgt 0$. In particular, there exist common subsequences $\{m_k\}_{k\in \N}$ of $\{m_\eps\}$, $\{\bSig_k\}_{k\in \N}$ of $\{\bSig_\eps\}$ such that $m_{k} \gt x_{i_0}$ with $1 \leq i_0 \leq n$ and $\bSig_k \gt \bSig \in \mathcal{S}_{>}(\R, d)$.
\end{lemma}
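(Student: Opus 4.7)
The plan is to establish, in turn: (i) $\det \bSig_\eps \geq \delta_1 > 0$; (ii) $|m_\eps| = O(1)$ and $\eps\Tr\bSig_\eps = O(1)$; (iii) $\text{dist}(m_\eps, \EE) \lgt 0$; and (iv) $\Tr\bSig_\eps = O(1)$, from which the desired compactness follows. The main obstacle is step (iv).

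Starting from \eqref{eq:dkl_asym1}, the non-negativity of $V_1^\eps$ and $V_2$ (from (A-1)) together with $\limsup F_\eps < \infty$ immediately yields $\det\bSig_\eps \geq \delta_1 > 0$ for $\eps$ small. Next, combining the (A-5) bound $\E^{\nu_\eps} V_1^\eps \geq c_1(|m_\eps|^2 + \eps\Tr\bSig_\eps) - c_0$ with the AM-GM inequality $\log \det \bSig_\eps \leq d\log(\Tr\bSig_\eps/d)$ and the upper bound on $F_\eps$ gives
\begin{equation*}
c_1(|m_\eps|^2 + \eps\Tr\bSig_\eps) - \tfrac{d\eps}{2}\log(\Tr\bSig_\eps/d) \leq c_0 + O(\eps).
\end{equation*}
Since $t \mapsto c_1 t - (d/2)\log(t/d)$ is bounded below, a bootstrap yields $\Tr\bSig_\eps = O(1/\eps)$, hence $\eps\Tr\bSig_\eps = O(1)$ and $|m_\eps|^2 \leq c_0/c_1 + o(1)$.

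Next, suppose $\text{dist}(m_\eps, \EE) \geq \eta_0 > 0$ along a subsequence. Since $|m_\eps|$ is bounded, (A-1) gives uniform local Lipschitz bounds on $V_1^\eps$ on a bounded region containing all the $m_\eps$; combined with \eqref{eq:distV} this produces fixed constants $r_*, c_* > 0$ with $V_1^\eps \geq c_*$ on $B(m_\eps, r_*)$. Since $\eps\Tr\bSig_\eps$ is bounded, the spectrum of $\eps\bSig_\eps$ lies in a compact set and $\nu_\eps(B(m_\eps, r_*)) \geq c_{**} > 0$ uniformly. Hence $\frac{1}{\eps}\E^{\nu_\eps} V_1^\eps \gt \infty$, contradicting $F_\eps$ bounded. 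Therefore $m_\eps \gt x^{i_0}$ along a subsequence for some $i_0$, and $x_\eps^{i_0} \gt x^{i_0}$ by (A-4).

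Finally, assume for contradiction that $\Tr\bSig_\eps \gt \infty$ along a further subsequence, and extract once more so that $\eps\bSig_\eps \gt \bSig_\infty$ (PSD). If $\bSig_\infty \neq 0$, then $\nu_\eps$ converges weakly to $N(x^{i_0}, \bSig_\infty)$, whose support is an affine subspace of dimension $\geq 1$; choose a bounded open set $A$ disjoint from the finite set $\EE$ with $N(x^{i_0}, \bSig_\infty)(A) > 0$. By \eqref{eq:distV} and uniform Lipschitz continuity, $V_1^\eps \geq c_* > 0$ on $A$ for $\eps$ small, and since $\nu_\eps(A)$ converges to a positive limit, $\E^{\nu_\eps} V_1^\eps$ is bounded away from zero, forcing $\frac{1}{\eps}\E V_1^\eps \gt \infty$, a contradiction. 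If instead $\bSig_\infty = 0$, so $\eps\Tr\bSig_\eps \gt 0$, then $\nu_\eps$ concentrates on a fixed small ball $B(x_\eps^{i_0}, \delta)$, on which Taylor expansion (using (A-1), (A-3), (A-4) for uniform $C^4$ control and positive-definiteness of the limiting Hessian) gives $V_1^\eps(x) \geq (\gamma_0/2)|x - x_\eps^{i_0}|^2$ for some $\gamma_0 > 0$; a Gaussian second-moment estimate (Cauchy--Schwarz on the tail) then yields $\E^{\nu_\eps} V_1^\eps \geq (\gamma_0/4)\,\eps\Tr\bSig_\eps$, so AM-GM and the bound on $F_\eps$ give $(\gamma_0/4)\Tr\bSig_\eps - (d/2)\log(\Tr\bSig_\eps/d) \leq O(1)$, contradicting $\Tr\bSig_\eps \gt \infty$. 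Hence $\Tr\bSig_\eps = O(1)$. Combined with $\det\bSig_\eps \geq \delta_1$, the bound $\lambda_{\min}(\bSig_\eps) \geq \det\bSig_\eps/\Tr(\bSig_\eps)^{d-1}$ shows $\lambda_{\min}(\bSig_\eps)$ is bounded away from zero; a standard compactness argument then extracts a convergent subsequence $(m_{\eps_k}, \bSig_{\eps_k}) \gt (x^{i_0}, \bSig)$ with $\bSig \in \mathcal{S}_{>}(\R, d)$. The hardest step is the Taylor-plus-tail lower bound on $\E^{\nu_\eps} V_1^\eps$ in the last case, which requires careful handling of the Gaussian mass outside the Taylor neighborhood and the transition to the quadratic-growth bound (A-5) at infinity.
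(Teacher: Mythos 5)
Your proof is correct, but it reaches the two hard conclusions --- the upper bound on $\Tr(\bSig_\eps)$ and the lower bound on $\lambda_{\min}(\bSig_\eps)$ --- by a genuinely different route than the paper. After the rough bounds (your steps (i)--(ii), which coincide with the paper's Step 1), the paper proves a quantitative concentration estimate: finiteness of $F_\eps$ forces $\E^{\nu_\eps}V_1^\eps\lesssim \eps(1+\log\det\bSig_\eps)$, so by \eqref{eq:distV} some minimizer $x_{i_0}$ carries mass at least $\tfrac1{2n}$ of $\nu_\eps$ in a ball of radius $\sqrt{\eps(R_1+R_2\log\det\bSig_\eps)}$; comparing this mass with the sup of the Gaussian density times the volume of that ball yields $\det\bSig_\eps\le C$, a one-coordinate-at-a-time integration in the diagonalizing frame yields $\bLam_\eps^{(i)}\ge C'$ for every eigenvalue, and the trace bound follows from $\Tr(\bSig_\eps)=\sum_i \det(\bSig_\eps)/\prod_{j\ne i}\bLam_\eps^{(j)}$; the convergence $\mathrm{dist}(m_\eps,\EE)\lgt 0$ is deduced only at the end, by intersecting $B(m_\eps,c\sqrt\eps)$ with the concentration balls. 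You instead prove $\mathrm{dist}(m_\eps,\EE)\lgt 0$ first by a soft argument (bounded covariance gives uniformly positive mass in a fixed ball around $m_\eps$, where $V_1^\eps$ would be uniformly positive if $m_\eps$ stayed away from $\EE$), then rule out $\Tr(\bSig_\eps)\gt\infty$ by weak compactness of $\eps\bSig_\eps$, using in the degenerate case the local quadratic lower bound $V_1^\eps(x)\gtrsim|x-x^{i_0}_\eps|^2$ coming from the positive-definiteness of $D^2V_1(x^{i_0})$ --- an ingredient the paper's proof of this lemma never needs --- and finally recover the eigenvalue bound from $\lambda_{\min}(\bSig_\eps)\ge\det\bSig_\eps/\Tr(\bSig_\eps)^{d-1}$ together with the already-established lower bound on $\det\bSig_\eps$. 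The paper's argument is more explicit and uses only \eqref{eq:distV}; yours is softer and lets the second-moment identity $\E^{\nu_\eps}|x-x^{i_0}_\eps|^2=\eps\Tr(\bSig_\eps)+|m_\eps-x^{i_0}_\eps|^2$ do the work of the paper's density estimate.

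One point worth making explicit: \eqref{eq:distV} is a pointwise (in $x$) liminf statement, whereas both your step (iii) and your case A of step (iv) need a lower bound $V_1^\eps\ge c_*>0$ holding uniformly on a compact set at positive distance from $\EE$, for all sufficiently small $\eps$. This does follow, but from the locally uniform convergence $V_1^\eps\gt V_1$ guaranteed by (A-1) and (A-3) (as noted in the remark after the assumptions) combined with $V_1>0$ off $\EE$ and the coercivity inherited from (A-5); your appeal to ``uniform local Lipschitz bounds plus \eqref{eq:distV}'' gestures at this but should be spelled out. The tail estimate in your final case (showing the second moment outside $B(x^{i_0}_\eps,\delta)$ is $o(\eps\Tr\bSig_\eps)$ because the Gaussian tail is $e^{-c/(\eps\Tr\bSig_\eps)}$ while the fourth moment is $O(1)$) is delicate but does go through.
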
 

\begin{proof}
Let $ M := \limsup_{\eps \lgt 0} F_\eps (m_\eps, \bSig_\eps) < \infty$. Since $m_\eps$ and $\bSig_\eps$ are defined in finite dimensional spaces, we only need to show that both sequences are uniformly bounded. The proof consists of the following steps.

\textbf{Step 1}. We first prove the following rough bounds for $\Tr(\bSig_\eps)$: there exists positive constants $C_1, C_2$ such that when $\eps \ll 1$, 
\be\label{eq:bdsigma0}
C_1 \leq \Tr (\bSig_\eps) \leq \frac{C_2}{\eps}.
\en
In fact, from the formula \cref{eq:dkl_asym1} and the assumption that $V^\eps_1$ and $V_2$ are non-negative, we can get that when $\eps \ll 1$
\be\label{eq:logdetsig}
\log (\det \bSig_\eps)  \geq 2 (C_V - M - 1)
\en
 where the constant
$$
C_V := -\frac{d}{2} + \log \left(\sum_{i=1}^n \beta^i \right).
$$
Then the lower bound of \cref{eq:bdsigma0} follows from \cref{eq:logdetsig} and the arithmetic-geometric mean inequality 
\be\label{eq:trace-det}
\det \bA \leq \left(\frac{1}{d}\Tr(\bA)\right)^d
\en
 which holds for any positive definite $\bA$. 
 In addition, using the condition (A-5) for the potential $V^\eps_1$, we obtain from \cref{eq:dkl_asym1} that when $\eps \ll 1$,
\be\bea\label{eq:bdsigma1}
M & \geq F_\eps(m_\eps, \bA_\eps) \\
& \geq \E^{\nu_\eps} V_2(x)  + \frac{c_1}{\eps} \E^{\nu_\eps} |x|^2 -\frac{c_0}{\eps} - \frac{1}{2}\log\left(\det \bSig_\eps\right) + C_V - 1\\
& = \E^{\nu_\eps} V_2(x)  + c_1 \Tr(\bSig_\eps) + \frac{c_1 |m_\eps|^2}{\eps} -\frac{c_0}{\eps} - \frac{1}{2}\log\left(\det \bSig_\eps\right) + C_V - 1\\
& \geq c_1\Tr(\bSig_\eps) -\frac{c_0}{\eps}   - \frac{1}{2}\log \left(\left(\frac{1}{d} \Tr(\bSig_\eps)\right)^d\right) + C_V - 1\\
& = c_1 \Tr(\bSig_\eps) - \frac{c_0}{\eps}  - \frac{d}{2} \log(\Tr(\bSig_\eps)) + \frac{d\log d}{2} + C_V - 1 ,
\ena
\en
where we have used the inequality \cref{eq:trace-det} and the assumption that $V_2$ is non-negative. Dropping the non-negative terms on the right hand side we rewrite this expression as an estimate on $\Tr(\bSig_\eps)$, 
$$
c_1 \Tr(\bSig_\eps)  - \frac{d}{2} \log(\Tr(\bSig_\eps)) \leq M + \frac{c_0}{\eps} +1,
$$
and conclude that  there exists $C_2 > 0$ such that $\Tr(\bSig_\eps) \leq C_2/\eps$ by observing that for $x \gg 1$ we have $c_{1} x - \frac{d}{2} \log x \geq \frac{c_1}{2} x$.

\textbf{Step 2.} 
In this step we show that for $\eps \ll 1$ the mass of $\nu_\eps$ concentrates near the minimizers.
More precisely, we claim that there exist constants $R_1, R_2 > 0$, such that for every $\eps \ll 1$ there exists an index $i_0 \in \{1,2,\cdots, n\}$ such that 
\be\label{eq:bsigma2}
\nu_\eps\left( B\left(x_{i_0}, \sqrt{\eps(R_1 + R_2\log\left(\det \bSig_\eps\right))}\right)\right) \geq \frac{1}{2n}.
\en
On the one hand, from the expression \cref{eq:dkl_asym1} and the assumption that
$\limsup_{\eps \lgt 0} F_\eps (m_\eps, \bSig_\eps) \leq M$ we know that there exist $C_3, C_4 > 0$ such that when  $\eps \ll 1$
\be\label{eq:bsigma3}
\E^{\nu_\eps} V^\eps_1(x) \leq \eps\left(C_3 + C_4 \log\left(\det \bSig_\eps\right)\right).
\en 
On the other hand, it follows from \cref{eq:distV} that for $\eta \ll 1$
\be\bea
\E^{\nu_\eps} V^\eps_1(x) & \geq \E^{\nu_\eps} \left[V^\eps_1(x) \bI_{(\cup_{i=1}^n B(x_i, \eta))^c }(x)\right]\\
& \geq C_{\delta}\eta^2  \nu_\eps (\cup_{i=1}^n B(x_i, \eta))^c,
\ena
\en
which combined with \cref{eq:bsigma3} leads  to 
\be\label{jji}
\nu_\eps (\cup_{i=1}^n B(x_i, \eta))^c ) \leq \eps \frac{\left(C_3 + C_4 \log\left(\det \bSig_\eps\right)\right)}{C_{\delta}\eta^2 }.
\en
Now we choose $\eta = \eta_\eps := \sqrt{2 \eps(C_3 + C_4 \log (  \det \bSig_\eps))/ C_\delta}$ (by the rough bound \cref{eq:bdsigma0} this $\eta_\eps$ tends to zero as $\eps \to 0$, which permits to apply \cref{eq:distV}). 
This implies \cref{eq:bsigma2} with $R_1 = \frac{2C_3}{C_\delta}$ and $R_2 = \frac{2C_4}{C_\delta}$, 
by  passing to the complement and observing that 
$$
\sup_{i \in \{ 1, \ldots, n\}} \nu_\eps (B(x_i, \eta_\eps)) \geq \frac{1}{n} \nu_\eps \left( \cup_{i \in \{ 1, \ldots, n\}}B(x_i, \eta_\eps) \right).
$$

\textbf{Step 3.} We prove the bounds \cref{eq:compactness}. 
As in the previous step we set 
$$\eta_\eps = \sqrt{\eps(R_1 + R_2 \log(\det \bSig_\eps))}.
$$
 It follows from \cref{eq:bsigma2} that  
\be\bea\label{eq:bsigma4}
 \frac{1}{2n}& \leq \nu_\eps(B(x_{i_0}, \eta_\eps)) \\
& = \frac{1}{\sqrt{(2\pi\eps)^d \det \bSig_\eps}} \int_{B(x_{i_0}, \eta_\eps)} \exp\left(-\frac{1}{2\eps} \langle x - m_\eps, \bSig_\eps^{-1} (x - m_\eps) \rangle\right) dx\\
& \leq   \frac{1}{\sqrt{(2\pi\eps)^d \det \bSig_\eps}} | B(x_{i_0}, \eta_\eps)|\\
&   \leq C  \frac{1}{\sqrt{\eps^d \det \bSig_\eps}} \eta_\eps^d   \leq C \sqrt{  \frac{(R_1 + R_2 \log (\det \bSig_\eps))^d}{\det \bSig_\eps} }.
\ena
\en
This implies that $\limsup_{\eps \lgt 0}\det \bSig_\eps < C$ for some $C > 0$. In order to get a lower bound on 
individual eigenvalues $\bLam_\eps^{(i)}$ of $\bSig_\eps$, we rewrite the same integral in a slightly different way. 
We use the change of coordinates $y =  \frac{\mathbf{P}_\eps^T (x-m_\eps)}{\sqrt \eps}$, where $ \mathbf{P}_\eps$ is orthogonal and diagonalises $\bSig_\eps$ and observe that under this transformation
 $B( x^i, \eta_\eps)$ is mapped into $B(\frac{x^i-m}{\sqrt{\eps}}, \frac{\eta_\eps}{\eps}) \subseteq \{ y \colon |y_j - \frac{(x^{i}-m)}{\sqrt{\eps}}| \leq \frac{\eta_\eps}{\sqrt{\eps}} \quad \text{for } j=1, \ldots, n \}$. This yields 
\be\bea\label{eq:logdet}
\frac{1}{2n} & \leq \frac{1}{\sqrt{(2\pi)^d \det \bSig_\eps}} 
\int_{
\{|y_j  - \frac{(x^{i}-m)}{\sqrt{\eps}}| \leq \frac{\eta_\eps}{\sqrt{\eps}}\} }
 \exp\left(-\frac{1}{2}\langle y_i , (\bLam_{\eps}^{(i)})^{-1} y_i  \rangle\right) d y \\
& \leq  \frac{1}{\sqrt{(2\pi)^d \det \bSig_\eps}} \left(\frac{2\eta _\eps}{\sqrt{\eps}}\right)^{d-1} \int_{\R} \exp\left(-\frac{|y_i|^2}{2\bLam_\eps^{(i)}}\right) d y_i \\
& = \sqrt{\frac{\bLam_\eps^{(i)}}{(2\pi)^d \det \bSig_\eps}} \left(R_1 + R_2\log(\det \bSig_\eps)\right)^{\frac{d-1}{2}},
\ena
\en
for any $i \in \{1,2,\cdots, d\}$. Together with uniform boundedness of $\det \bSig_\eps$ this implies that $\bLam_\eps^{(i)} > C^\prime$ for some $C^\prime > 0$. Finally, 
\be\label{eq:upptracedet}
\Tr(\bSig_\eps) 
= \sum_{i=1}^d \bLam_\eps^{(i)} 
= \sum_{i=1}^d \frac{ \det (\bSig_\eps) }{\prod_{j=1, j\neq i}^d  \bLam_\eps^{(j)}}
\leq \frac{dC}{(C^\prime)^{d-1}} < \infty.
\en
This proves \cref{eq:compactness}.

\textbf{Step 4.} We show that $\text{dist}(m_\eps, \EE) \lgt 0$ as $\eps  \lgt 0$. 
On the one hand, by the upper bound on the variance in \cref{eq:compactness} and standard Gaussian concentration, we see that there exists a constant $c > 0$, such that for $\eps \ll 1$ we have $\nu_\eps(B(m_\eps, \sqrt{ \eps} c)) \geq \frac34$. On the other hand, we had already seen in \cref{jji} that for $\eta = \eta_\eps$ we have 
$$
\nu_\eps (\cup_{i=1}^n B(x_i, \eta_\eps))^c ) \leq \frac12,
$$
and hence $B(m_\eps, \sqrt{ \eps} c)$ must intersect at least one of the $B(x_i, \eta_\eps)$. This yields for this particular index $i$
$$
|x_i - m_\eps| \leq \eta_\eps + \sqrt{\eps} c,
$$
and establishes the claim.

\end{proof}
\begin{lemma}\label{lem:dkl-asym2}
Let $\{(m_\eps, \bSig_\eps)\}$ be a sequence such that $\limsup_{\eps \lgt 0} |m_\eps| =: C_1 <  \infty$ and
$$
0 < c_2 := \liminf_{\eps \lgt 0} \lambda_{\min} (\bSig_\eps) < \limsup_{\eps \lgt 0} \Tr(\bSig_\eps) =: C_2 < \infty.
$$
Then as $\eps \lgt 0$,
\be\bea\label{eq:dkl-asym2}
F_\eps(m_\eps, \bSig_\eps) & = \frac{V^\eps_1(m_\eps)}{\eps} + V_2(m_\eps) + \frac{1}{2}\Tr(D^2 V_1^\eps(m_\eps) \cdot \bSig_\eps) - \frac{1}{2}\log\left((2\pi\eps)^d\det \bSig_\eps\right)\\
& - \frac{d}{2} + \log Z_{\mu, \eps} + r_\eps
\ena\en
where $|r_\eps| \leq C\eps$ with $C = C(C_1, c_2, C_2, M_V)$ (Recall that  $M_V$ is the constant defined in \cref{assump} (A-1)).
\end{lemma}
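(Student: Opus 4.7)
The plan is to substitute the scaled covariance $\eps \bSig_\eps$ into the exact formula \eqref{eq:dkl} for the Kullback--Leibler divergence. Using $\det(\eps \bSig_\eps) = \eps^d \det \bSig_\eps$, one immediately obtains
\be
F_\eps(m_\eps, \bSig_\eps) = \frac{1}{\eps}\E^{\nu_\eps} V^\eps_1(x) + \E^{\nu_\eps} V_2(x) - \frac{1}{2}\log\bigl((2\pi\eps)^d \det \bSig_\eps\bigr) - \frac{d}{2} + \log Z_{\mu, \eps},
\en
so every term apart from the two expectations already appears verbatim in the target statement. The whole task therefore reduces to establishing the two asymptotic identities
\be
\frac{1}{\eps}\E^{\nu_\eps} V^\eps_1(x) = \frac{V^\eps_1(m_\eps)}{\eps} + \frac{1}{2}\Tr\bigl(D^2 V^\eps_1(m_\eps) \bSig_\eps\bigr) + O(\eps), \qquad \E^{\nu_\eps} V_2(x) = V_2(m_\eps) + O(\eps),
\en
with implicit constants depending only on $C_1$, $c_2$, $C_2$, and $M_V$.

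Both identities I would obtain by Taylor expansion around the mean $m_\eps$. For $V^\eps_1$, which is $C^4$ by \cref{assump}(A-1), I would use the third-order Taylor polynomial with integral remainder of order four. On taking the $\nu_\eps$-expectation, the first- and third-order contributions vanish because the density of $\nu_\eps$ is symmetric about $m_\eps$ and its odd centered moments are zero; the second-order term produces precisely $\tfrac{\eps}{2}\Tr(D^2 V^\eps_1(m_\eps)\bSig_\eps)$ since the covariance of $\nu_\eps$ is $\eps \bSig_\eps$; dividing by $\eps$ then yields the desired expression up to the quartic remainder. The treatment of $V_2 \in C^2$ is analogous but simpler: a first-order Taylor polynomial with quadratic integral remainder suffices, with the linear term again killed by symmetry.

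The main obstacle is controlling these remainders, because the $C^4$ and $C^2$ bounds supplied by (A-1) are not genuinely uniform but grow like $M_V e^{|x|^2}$, so the pointwise remainder estimates are not directly integrable against $\nu_\eps$. To absorb this exponential growth into the Gaussian decay, I would use the hypotheses $|m_\eps| \leq C_1$ and $c_2 \leq \lambda_{\min}(\bSig_\eps) \leq \Tr(\bSig_\eps) \leq C_2$ to ensure that, for sufficiently small $\eps$, the quadratic form $\tfrac{1}{2\eps}\langle x - m_\eps, \bSig_\eps^{-1}(x-m_\eps)\rangle$ in the Gaussian exponent strictly dominates $2|x|^2$. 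A standard completing-the-square argument then bounds $\E^{\nu_\eps} e^{2|x|^2}$ uniformly in $\eps$ by a constant depending only on $C_1, c_2, C_2$, and the Cauchy--Schwarz inequality combined with the standard Gaussian moment bound gives $\E^{\nu_\eps}[|x - m_\eps|^k e^{|x|^2}] \leq C(C_1, c_2, C_2)\, \eps^{k/2}$ for every even integer $k$. Taking $k = 4$ controls the $V^\eps_1$-remainder by $C M_V \eps^2$, which becomes $O(\eps)$ after the division by $\eps$; taking $k = 2$ controls the $V_2$-remainder directly by $O(\eps)$. Assembling these pieces delivers the explicit bound $|r_\eps| \leq C \eps$ with the stated dependence of $C$.
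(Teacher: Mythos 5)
Your proposal is correct and follows essentially the same route as the paper's proof: Taylor expansion of $V_1^\eps$ (to fourth order) and $V_2$ (to second order) about $m_\eps$, cancellation of the odd-order terms by Gaussian symmetry, and absorption of the $M_V e^{|x|^2}$ growth from Assumption (A-1) into the Gaussian decay using the uniform bounds on $m_\eps$ and on the spectrum of $\bSig_\eps$. The only cosmetic difference is that you invoke Cauchy--Schwarz to separate the polynomial factor from $e^{|x|^2}$, whereas the paper completes the square directly and evaluates the moment of the tilted Gaussian with covariance governed by $\bSig_\eps^{-1} - 4\eps\,\mathbf{I}_d$; both yield the same $|r_\eps| \leq C\eps$ with the stated dependence of the constant.
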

\begin{proof}
The lemma follows directly from the expression \cref{eq:dkl} and Taylor expansion. Indeed, we first expand $V_2$ near $m_\eps$ up to the first order and then take expectation to get
$$
\E^{\nu_\eps} V_2(x) = V_2(m_\eps) + \E^{\nu_\eps} R_\eps(x)
$$
with residual 
$$
R_\eps(x) = \sum_{|\alpha|=2} \frac{(x - m_\eps)^\alpha}{\alpha!}\int_0^1 \partial^\alpha V_2\left(\xi x + (1 - \xi)m_\eps\right) (1 - \xi)^2 d \xi.
$$
Thanks to the condition (A-1), one can obtain the bound
\be\bea\label{eq:res0}
\E^{\nu_\eps} R_\eps(x) & \leq  \sum_{|\alpha| = 2} \frac{1}{\alpha!} \max_{\xi \in [0,1]} \left\{\E^{\nu_\eps} \left[|x - m_\eps|^2 \partial^\alpha V_2\left(\xi x + (1 - \xi)m_\eps\right)\right]\right\} \\
& \leq \frac{M_V}{\sqrt{(2\pi\eps)^d \det\mathbf{\Sigma_\eps}}} \max_{\xi \in [0,1]} \left\{\int_{\R^d} |x|^2 e^{(|x| + |m_\eps|)^2}\cdot e^{-\frac{1}{2\eps} x^T\mathbf{\Sigma}_\eps^{-1}x} dx \right\}\\
& \leq \frac{M_V}{\sqrt{(2\pi\eps)^d \det\mathbf{\Sigma}_\eps}} e^{2|m_\eps|^2}  \int_{\R^d}  |x|^2  e^{-\frac{1}{2\eps} x^T  (\mathbf{\Sigma}_\eps^{-1} - 4\eps \cdot \mathbf{I}_d) x} dx \\
& = \frac{M_V\eps}{\sqrt{\det\mathbf{\Sigma}_\eps}} e^{2|m_\eps|^2}  \cdot \det (\mathbf{\Sigma}_\eps^{-1} - 4\eps \cdot \mathbf{I}_d)^{-1}\\
& \leq C \eps,
\ena
\en
when $\eps \ll 1$. Note that in the last inequality we have used the assumption that all eigenvalues of $\bSig_\eps$ are bounded from above
which ensures 
that for $\eps \ll 1$ the matrix $\mathbf{\Sigma}_\eps^{-1} - 4\eps \cdot \mathbf{I}_d$ is positive definite.
 Hence 
 $$
 \E^{\nu_\eps} V_2(x) = V_2(m_\eps) + r_{1,\eps}
 $$ 
 with $r_{1,\eps} \leq C\eps$ as $\eps \lgt 0$. Similarly, one can take the fourth order Taylor expansion for $V_1^\eps$ near $m_\eps$ and then take expectation to obtain that 
$$
\E^{\nu_\eps} V_1^\eps(x) = \frac{V_1^\eps(m_\eps)}{\eps} + \frac{1}{2} \Tr\left(D^2 V_1^\eps(m_\eps) \cdot \bSig_\eps\right) + r_{2,\eps}
$$
with $r_{2,\eps} \leq C\eps$.
Then \cref{eq:dkl-asym2} follows directly by inserting the above equations into the expression \cref{eq:dkl}. 
\end{proof}

 The following corollary is a direct consequence of \cref{lem:normconst}, \cref{lem:compactness} and \cref{lem:dkl-asym2}, providing an asymptotic formula for $F_\eps(m_\eps, \bSig_\eps)$ as $\eps \lgt 0$. 
\begin{corollary}
Let $\{(m_\eps, \bSig_\eps)\} \subset \R^d \times \mathcal{S}_{\geq} (\R, d)$ be such that $\limsup_{\eps \lgt 0} F_\eps (m_\eps, \bSig_\eps) < \infty$. Then
for $\eps \ll 1$ 
\be\bea\label{eq:dkl-asym22}
F_\eps(m_\eps, \bSig_\eps) & = \frac{V^\eps_1(m_\eps)}{\eps} + V_2(m_\eps) + \frac{1}{2}\Tr(D^2 V(m_\eps) \cdot \bSig_\eps) - \frac{1}{2}\log\left(\det \bSig_\eps\right)\\
& - \frac{d}{2} + \sum_{i=1}^n \beta^i +  o(1).
\ena
\en
\end{corollary}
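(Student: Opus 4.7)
The corollary is essentially a bookkeeping exercise: it combines the three already-established ingredients (Lemma \ref{lem:normconst}, Lemma \ref{lem:compactness}, Lemma \ref{lem:dkl-asym2}) into a single asymptotic formula. My plan is to verify the hypotheses of Lemma \ref{lem:dkl-asym2} using the compactness lemma, then substitute the Laplace-type asymptotics of $Z_{\mu,\eps}$ from Lemma \ref{lem:normconst}, and finally pass from $D^2V_1^\eps$ to its limit $D^2V_1$ inside the trace.

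First, I would observe that the hypothesis $\limsup_{\eps \lgt 0} F_\eps(m_\eps,\bSig_\eps) < \infty$ is precisely the hypothesis of Lemma \ref{lem:compactness}. That lemma yields $0 < \liminf_{\eps\lgt 0}\lambda_{\min}(\bSig_\eps) \leq \limsup_{\eps\lgt 0} \Tr(\bSig_\eps) < \infty$ and $\mathrm{dist}(m_\eps,\EE) \to 0$; since $\EE$ is a finite set of points in $\R^d$, the latter gives $\limsup_{\eps\lgt 0}|m_\eps| < \infty$. Thus the sequence $\{(m_\eps,\bSig_\eps)\}$ fits the framework of Lemma \ref{lem:dkl-asym2}, so
\begin{equation*}
F_\eps(m_\eps,\bSig_\eps) = \frac{V_1^\eps(m_\eps)}{\eps} + V_2(m_\eps) + \tfrac12\Tr\!\bigl(D^2V_1^\eps(m_\eps)\bSig_\eps\bigr) - \tfrac12\log\bigl((2\pi\eps)^d\det\bSig_\eps\bigr) - \tfrac{d}{2} + \log Z_{\mu,\eps} + O(\eps).
\end{equation*}

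Next, Lemma \ref{lem:normconst} gives $\log Z_{\mu,\eps} = \tfrac{d}{2}\log(2\pi\eps) + \log\bigl(\sum_{i=1}^n\beta^i\bigr) + o(1)$, and substituting this into the previous display cancels the $-\tfrac{1}{2}\log((2\pi\eps)^d)$ term exactly, leaving $-\tfrac12\log\det\bSig_\eps$ plus the $\log\sum_i\beta^i$ contribution. The only remaining point is to replace $D^2V_1^\eps(m_\eps)$ by $D^2V_1(m_\eps)$ inside the trace. By Assumption (A-1), $V_1^\eps$ is bounded in $C^4$ on any compact set for $\eps$ small, and by (A-3) it converges pointwise to $V_1$; interpolation (as noted in the remark following \cref{assump}) upgrades this to locally uniform convergence of the Hessians. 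Since $|m_\eps|$ stays bounded, $|D^2V_1^\eps(m_\eps) - D^2V_1(m_\eps)| \to 0$, and combined with the boundedness of $\Tr(\bSig_\eps)$ this gives $\Tr\bigl((D^2V_1^\eps(m_\eps)-D^2V_1(m_\eps))\bSig_\eps\bigr) \to 0$.

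There is no real obstacle: the statement is a direct assembly of the prior results, and the only small care needed is the Hessian replacement, which follows from the locally uniform $C^3$ convergence guaranteed by (A-1) and (A-3) together with the uniform trace bound on $\bSig_\eps$. The resulting formula is exactly \cref{eq:dkl-asym22}.
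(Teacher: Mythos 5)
Your proposal is correct and is exactly the paper's intended argument: the paper states this corollary as a direct consequence of Lemma \ref{lem:normconst}, Lemma \ref{lem:compactness} and Lemma \ref{lem:dkl-asym2}, and your assembly (compactness to validate the hypotheses, substitution of the Laplace asymptotics for $\log Z_{\mu,\eps}$, and the Hessian replacement via locally uniform $C^3$ convergence) fills in precisely the steps the paper leaves implicit. Note only that your derivation correctly produces the term $\log\bigl(\sum_{i=1}^n\beta^i\bigr)$, so the bare $\sum_{i=1}^n\beta^i$ in the printed statement is a typo in the paper, not a discrepancy in your argument.
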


\begin{remark}\label{rem:3-9}
We do not have a bound on the convergence rate for  the residual  expression \cref{eq:dkl-asym22}, 
because \cref{lem:normconst} does not provide a convergence rate on the $Z_{\mu,\eps}$. This 
is because we do not impose any rate of convergence for the convergence of the $x^i_\eps$ to $x^i$. 
  The bound  $|r_\eps| \leq C\eps$ in 
\cref{lem:dkl-asym2} will be used to prove the rate of convergence for the posterior measures that arise from Bayesian inverse problems; see \cref{thm:expdkl} in \cref{sec:app}, and its proof.
\end{remark}

\begin{proof}[Proof of \cref{thm:gamma}]
 We first prove the liminf inequality. 
Let $(m_\eps, \bSig_\eps)$ be such that $m_\eps \gt m$ and $\bSig_\eps \gt \bSig$. We want to show that $F(m, \bSig) \leq \liminf_{\eps \lgt 0} F_\eps(m_\eps, \bSig_\eps)$. We may assume that $ \liminf_{\eps \lgt 0} F_\eps(m_\eps, \bSig_\eps) < \infty$ since otherwise there is nothing to prove. By Lemma~\cref{lem:compactness} this implies that $m\in \EE$ and that $\bSig$ is positive definite. 
Then the liminf inequality follows from \cref{eq:dkl-asym22} and the fact that $V_1^\eps \geq 0$. 

Next we show the limsup inequality is true. Given $m\in \EE, \bSig \in \mathcal{S}_{>}(\R, d)$,
we want to find recovery sequences $(m_k, \bSig_k)$ such that $(m_k,\bSig_k) \gt (m, \bSig)$ and $\limsup_{k} F_{\eps_k}(m_k, \bSig_k) \leq F(m, \bSig)$. In fact, we set $\bSig_k = \bSig$. Moreover, by \cref{assump} (A-4), we can choose $\{m_k\}$ to be one of the zeros of $V_1^{\eps_k}$ so that $V_1^{\eps_k} (m_k) = 0$ and $m_k \gt m\in \EE$. This implies that $V_2(m_k)\gt V_2(m)$. Then the limsup inequality follows from \cref{eq:dkl-asym22}.
\end{proof}

\begin{proof}[Proof of \cref{cor:covmin}]
First we show that $\limsup_{\eps \lgt 0} F_\eps(m_\eps, \bSig_\eps) < \infty$. In fact, let $\tilde{m}_\eps = x^1_\eps$ and $\tilde{\bSig}_\eps = D^2 V^\eps_1 (x^1_\eps)$. It follows from \cref{eq:dkl-asym22} that $\limsup_{\eps \lgt 0} F_\eps(\tilde{m}_\eps, \tilde{\bSig}_\eps) < \infty$. 
According to \cref{thm:fgamma}, the convergence of minima and minimizers is a direct consequence of \cref{lem:compactness} and \cref{thm:gamma}.
\end{proof}

\section{Approximation by Gaussian mixtures}\label{sec:gaussianmix}

In the previous section we demonstrated the approximation of
the target measure \cref{eq:mu_eps} by a Gaussian. Corollary 
\cref{cor:sgaussian} shows that, when the measure has only one
mode, this approximation is perfect in the limit $\eps \to 0$: the limit KL-divergence tends to zero since both entropies in \cref{eq:F0-2} tend to zero. However when multiple modes exist,
and persist in the small $\eps$ limit, the single Gaussian is inadequate because the relative entropy term $D_{\text{KL}}(\bee^{i} || \bbeta)$ can not be small even though the relative entropy between Gaussians tends to zero.
In this section we consider the approximation of the 
target measure $\mu_\eps$ by Gaussian mixtures in order to overcome this issue. 
We show that in the case of $n$ minimizers of $V_1$, the approximation 
with a mixture of $n$ Gaussians is again perfect as $\eps \to 0$.
The Gaussian mixture model is widely used in the pattern recognition and 
machine learning community; see the relevant discussion in \cite[Chapter 9]{B06}.

Let $\Delta^n$ be the standard $n$-simplex, i.e., 
$$
\Delta^n = \left\{\balpha = (\alpha^1, \alpha^2, \cdots, \alpha^n)\in \R^n: \alpha^i \geq 0 \text{ and } \sum_{i=1}^n \alpha^i = 1\right\}.
$$
For $\xi \in (0,1)$, we define
$
\Delta^n_{\xi} = \{\balpha = (\alpha^1, \alpha^2, \cdots, \alpha^n)\in \R^n: \alpha^i \geq \xi\}.
$

Recall that $\mathcal{A}$ is the set of Gaussian measures and define the set of Gaussian mixtures
\be\label{eq:Gaussmix}
\M_n = \left\{\nu = \sum_{i=1}^n \alpha^i \nu^i:  \nu^i  \in \A,\ \balpha = (\alpha^1, \alpha^2, \cdots, \alpha^n) \in \Delta^n \right\}.
\en
Also, for a fixed $\bxi = (\xi_1, \xi_2) \in (0,1)\times (0,\infty)$ we define the set 
\be\label{eq:Gaussmix-delta}
\bea
\M_n^{\bxi} = \Big\{\nu & = \sum_{i=1}^n \alpha^i \nu^i: \nu^i = N(m^i, \bSig^i)  \in \A \text{ with } \min_{i\neq j}|m^i - m^j| \geq \xi_2,\\
 & \balpha = (\alpha^1, \alpha^2, \cdots, \alpha^n) \in \Delta^n_{\xi_1} \Big\}.
 \ena
\en
While $\M_n$ is the set of all convex combinations of $n$ Gaussians taken from $\A$; the set $\M_n^{\bxi}$ can be seen as an ``effective'' version of $\M_n$, in which each Gaussian component plays an active role, and no two Gaussians share a common center.


Consider the problem of minimizing $D_{\text{KL}}(\nu || \mu_\eps)$ within $\M_n$ or $\M^{\bxi}_n$. 
Since the sets $\M_n$ and $\M^{\bxi}_n$ are both closed with respect to weak convergence, we have the following existence result whose proof is similar to \cref{thm:exists1} and is omitted.  
\begin{theorem}\label{thm:exists2}
Consider the measure $\mu_\eps$ given by \cref{eq:mu_eps} with fixed 
$\eps > 0$, and the problem of minimizing the functional
\be\label{eq:minfunc}
\nu \mapsto D_{\text{KL}}(\nu || \mu_\eps)
\en
from the set $\M_n$, or from the set $\M_n^{\bxi}$ with some fixed $\bxi = (\xi_1, \xi_2) \in (0, 1) \times (0, \infty)$. In both cases, there exists at least one minimizer to the functional \cref{eq:minfunc}. 
\end{theorem}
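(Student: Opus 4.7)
The plan is to closely follow the two-step strategy used in the proof of \cref{thm:exists1}: first demonstrate that the infimum is finite by exhibiting an admissible test mixture, and then extract a limit of a minimizing sequence using weak compactness of sub-level sets of the Kullback-Leibler divergence together with the closedness of $\M_n$ (resp.\ $\M_n^{\bxi}$) under weak convergence of probability measures. The proof should handle both cases in parallel since the $\M_n^{\bxi}$ case differs only in having stronger closed constraints on the parameters.

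\textbf{Step 1 (finiteness of the infimum).} For $\M_n$ I would take the explicit test mixture $\nu^\ast = \sum_{i=1}^n \frac{1}{n} N(0, \frac14 \bI_d)$ (which collapses to a single Gaussian, still in $\M_n$) and mimic the computation from the proof of \cref{thm:exists1}: invoking \cref{assump} (A-1) yields $\E^{\nu^\ast} V_1^\eps(x) \vee \E^{\nu^\ast} V_2(x) < \infty$ by the same Gaussian integral $\int e^{-2|x|^2+|x|^2}\,dx < \infty$, so $D_{\text{KL}}(\nu^\ast\|\mu_\eps)<\infty$ by the analogue of \cref{eq:dkl}. For $\M_n^{\bxi}$ one instead picks arbitrary fixed centers $c^1,\ldots,c^n \in \R^d$ satisfying $|c^i - c^j|\geq \xi_2$, weights $\alpha^i = 1/n\geq \xi_1$ (for $\xi_1\leq 1/n$; else one rescales the finite collection appropriately), and covariance $\frac14\bI_d$; the same exponential bound from (A-1) controls the expectations of $V_1^\eps$ and $V_2$ under each component.

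\textbf{Step 2 (extraction of a limit).} Take a minimizing sequence $\nu_k = \sum_{i=1}^n \alpha_k^i N(m_k^i,\bSig_k^i)$. By finiteness established above, $\sup_k D_{\text{KL}}(\nu_k\|\mu_\eps)<\infty$, so by the standard compactness of sub-level sets of relative entropy (cited already in the proof of \cref{thm:exists1} via \cite[Corollary 2.2]{PSSW15a}) the sequence $\nu_k$ is tight and admits a weak limit $\nu$ along a subsequence, with $D_{\text{KL}}(\nu\|\mu_\eps) \leq \liminf_k D_{\text{KL}}(\nu_k\|\mu_\eps)$ by lower semicontinuity. It then remains to check that $\nu\in\M_n$ (resp.\ $\M_n^{\bxi}$). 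Since the simplex $\Delta^n$ is compact one extracts $\alpha_k^i \to \alpha^i$ with $\sum_i \alpha^i=1$; for indices where $\alpha^i>0$, tightness of $\nu_k$ forces tightness of each $N(m_k^i,\bSig_k^i)$, so further subsequences give $m_k^i\to m^i\in\R^d$ and $\bSig_k^i\to\bSig^i\in\mathcal{S}_\geq(\R,d)$; the limit mixture therefore lies in $\M_n$ (indices with $\alpha^i=0$ can be filled in with any fixed Gaussian). For $\M_n^{\bxi}$, the constraints $\alpha^i\geq \xi_1>0$ and $|m^i-m^j|\geq \xi_2$ pass to the limit since they are closed conditions, which simultaneously rules out the degeneracies of vanishing weights or colliding centers.

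\textbf{Main obstacle.} The main subtlety, compared with the single-Gaussian case of \cref{thm:exists1}, is the non-uniqueness of the mixture representation and the risk that individual Gaussian components of a minimizing sequence degenerate (covariances collapsing to a singular matrix or mass escaping to infinity). This is where the case analysis splits: in $\M_n^{\bxi}$ the positivity of weights and separation of centers are built into the class, so one immediately obtains non-degeneracy from closedness; in $\M_n$ one instead must argue that if some component $N(m_k^i,\bSig_k^i)$ with $\alpha_k^i\to\alpha^i>0$ had $\bSig_k^i$ degenerating (singular limit) or $|m_k^i|\to\infty$, then the restriction of the relative entropy to the mass $\alpha^i$ would blow up, contradicting finiteness; absolute continuity of $\mu_\eps$ with respect to Lebesgue measure combined with the quadratic lower bound (A-5) on $V_1^\eps$ rules out both failure modes, exactly as in Step 1 of \cref{lem:compactness}.
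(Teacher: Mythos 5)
Your proposal is correct and follows essentially the route the paper intends: the paper omits the proof of \cref{thm:exists2}, remarking only that $\M_n$ and $\M_n^{\bxi}$ are closed under weak convergence and that the argument is "similar to \cref{thm:exists1}" (finite infimum via an explicit test measure under (A-1), then compactness of KL sub-level sets plus closedness of the admissible class). Your additional discussion of why the classes are weakly closed (tightness of each non-vanishing component, constraints $\alpha^i\geq\xi_1$ and $|m^i-m^j|\geq\xi_2$ being closed) fills in exactly the detail the paper leaves implicit.
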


Now we continue to investigate the asymptotic behavior of the Kullback-Leibler approximations based on Gaussian mixtures. To that end, we again parametrize  a measure $\nu$ in the set $ \M_n$ or $ \M^\xi_n$ by the weights $\balpha = (\alpha^1,\alpha^2, \cdots, \alpha^n)$ as well as the $n$ means as well as the $n$ covariances matrices. 
 Similar to the previous section we need to chose the right scaling in our Gaussian mixtures to reflect the typical size of fluctuations of $\mu_\eps$. Thus for $\bm = (m^1, m^2, \cdots, m^n)$  and $\bSig = (\bSig^1, \bSig^2, \cdots, \bSig^n)$. 
we set
\be\label{eq:nu-form}
\nu_\eps = \sum_{i=1}^n \alpha^i N(m^i, \eps \bSig^i).
\en
We can view $D_{\text{KL}}(\nu_\eps || \mu_\eps)$ as a functional of $(\balpha, \bm, \bSig)$ and study the $\Gamma$-convergence of the resulting functional.   
 For that purpose, we need to restrict our attention to finding the best Gaussian mixtures within $\M^{\bxi}_n$ for some $\bxi\in (0,1)\times (0, \infty)$. The reasons are the following. First, we require individual Gaussian measures $\nu^i$ to be active (i.e. $\alpha^i > \xi_1 > 0$) because $D_{\text{KL}}(\nu_\eps, \mu_\eps)$, as a family of functionals of $(\balpha, \bm, \bSig)$ indexed by $\eps$, is not compact if we allow some of the $\alpha^i$ to vanish. In fact, if $\alpha_\eps^{i} = 0$ for some $i \in 1,2,\cdots, n$, then $D_{\text{KL}}(\nu_\eps || \mu_\eps)$ is independent of $m^{i}_\eps$ and $\bSig^{i}_\eps$. In particular, if $|m_\eps^{i} | \wedge |\bSig_\eps^{i}| \gt \infty$ while $|m_\eps^{j} | \vee |\bSig_\eps^{j}| < \infty$ for all the $j$'s such that $j \neq i$, then it still holds that $\limsup_{\eps \lgt 0} D_{\text{KL}}(\nu_\eps || \mu_\eps) < \infty$. Second, it makes more sense to assume that the individual Gaussian means stay apart from each other (i.e. $\min_{i\neq j}|m^i - m^j| \geq \xi_2 > 0$) since we primarily want to locate different modes of the target measure. Moreover, it seems impossible to identify a sensible $\Gamma$-limit without such an assumption; see \cref{rem:assum}.

Recall that the measure $\nu$ has the form \cref{eq:nu-form}. 
Let $\bxi=(\xi_1,\xi_2) \in (0,1)\times (0,\infty)$ be fixed. 
In view of these considerations it is useful to define
$$S_{\bxi}=\{(\balpha, \bm) \in \Delta^n_{\xi_1} \times \R^{nd}: \min_{i\neq j} |m^i - m^j| \geq \xi_2.\}$$

We define the functional 
\be\label{eq:dkl-2}
G_\eps(\balpha, \bm, \bSig) := \begin{cases}
D_{\text{KL}}(\nu || \mu_\eps) & \text{ if } (\balpha,\bm) \in S_{\bxi},\\ 
+\infty & \text{ otherwise}.
\end{cases}
\en
By the definition of the Kullback-Leibler divergence, if $(\balpha,\bm) \in S_{\bxi}$, then
\be\label{eq:dkl-3}
G_\eps(\balpha, \bm, \bSig) = \int \rho(x) \log \rho(x)dx + \frac{1}{\eps} \E^{\nu} V^\eps_1(x) + \E^\nu V_2(x) + \log Z_{\mu, \eps}
\en
where $\rho$ is the probability density function (p.d.f) of $\nu$.

Recall the $\Gamma$-limit $F$ defined in \cref{eq:F}. Then we have the following $\Gamma$-convergence result.
\begin{theorem}\label{thm:gamma-2}
The $\Gamma$-limit of $G_\eps$ is 
\be\bea\label{eq:limit-G}
 G(\balpha, \bm, \bSig)  & := 
\sum_{i=1}^n \alpha^i D_{\text{KL}}\left( N(m^i, \bSig^i) \ ||\ N(m^i, (D^2 V_1(m^i))^{-1})\right) 
\\
&\qquad + D_{\text{KL}}( \balpha\ ||\ \bbeta ) 
\ena
\en
if $(\balpha,\bm) \in S_{\bxi}$ and $m^i\in \EE$, 
 and $\infty$   otherwise.
\end{theorem}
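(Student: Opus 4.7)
The proof proposal follows the structure of the single-Gaussian case (\cref{thm:gamma}), but the entropy of a mixture is no longer given by a closed formula and must be handled with care. The plan is to obtain an asymptotic expansion of $G_\eps(\balpha_\eps, \bm_\eps, \bSig_\eps)$ analogous to \cref{eq:dkl-asym22}, then read off the $\Gamma$-liminf inequality using $V_1^\eps \geq 0$ and produce a recovery sequence by centering each component at a minimizer $x^i_\eps$ of $V_1^\eps$.

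First I would prove a compactness statement replacing \cref{lem:compactness}. Given a sequence $(\balpha_\eps, \bm_\eps, \bSig_\eps)$ with $\limsup_\eps G_\eps < \infty$ and $(\balpha_\eps, \bm_\eps) \in S_{\bxi}$, the lower bound $\alpha^i_\eps \geq \xi_1$ and the separation $|m^i_\eps - m^j_\eps| \geq \xi_2$ allow me to adapt Steps~1--4 of \cref{lem:compactness} componentwise: each $\alpha^i_\eps \nu^i_\eps$ must still place a comparable fraction of mass near some minimizer in $\EE$, giving upper and lower bounds on the eigenvalues of $\bSig^i_\eps$ and forcing $m^i_\eps$ to stay bounded and approach some $m^i \in \EE$. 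Combined with the separation $\xi_2$, this forces the limit $\bm = (m^1, \ldots, m^n)$ to be a relabeling of $\EE$, and $\balpha_\eps \to \balpha \in \Delta^n_{\xi_1}$ along a subsequence.

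Second, and this is the main technical point, I would establish the following asymptotic expansion under the compactness bounds: with $\rho_\eps$ the density of $\nu_\eps$,
\begin{equation}\label{eq:mix-entropy}
\int \rho_\eps \log \rho_\eps\, dx = \sum_{i=1}^n \alpha_\eps^i \log \alpha_\eps^i - \sum_{i=1}^n \alpha_\eps^i \Bigl( \tfrac{d}{2}\log(2\pi e \eps) + \tfrac{1}{2}\log \det \bSig^i_\eps \Bigr) + o(1).
\end{equation}
The key idea is to partition $\R^d$ using the balls $B_i := B(m^i_\eps, \xi_2/3)$, which are disjoint for small $\eps$. On $B_i$ the $j\neq i$ components of the mixture are bounded by $C\eps^{-d/2}\exp(-c/\eps)$ (because they are Gaussians with variance $\O(\eps)$ evaluated at distance $\geq \xi_2/3$ from their mean), so $\rho_\eps \approx \alpha_\eps^i \rho_\eps^i$ pointwise with exponentially small error, and $\log \rho_\eps - \log(\alpha^i_\eps\rho^i_\eps)$ is negligible when weighted against $\rho_\eps$. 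On the complement $(\bigcup_i B_i)^c$ both $\rho_\eps$ and $\rho_\eps^i$ have exponentially small mass. Summing over $i$ and using $\int \rho_\eps^i \log \rho_\eps^i\, dx = -\tfrac{d}{2}\log(2\pi e \eps) - \tfrac12 \log \det \bSig^i_\eps$ yields \eqref{eq:mix-entropy}. This is the step I expect to be the main obstacle, since one must control the logarithm carefully against the exponentially small overlaps and against the tails beyond the $B_i$'s.

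Third, I would combine \eqref{eq:mix-entropy} with the linearity $\E^{\nu_\eps}\phi = \sum_i \alpha^i_\eps \E^{N(m^i_\eps, \eps \bSig^i_\eps)}\phi$ and the Taylor expansions already carried out in the proof of \cref{lem:dkl-asym2} (applied to each component) to write
\begin{equation*}
G_\eps = \sum_{i=1}^n \alpha^i_\eps \log \alpha^i_\eps + \sum_{i=1}^n \alpha^i_\eps \Bigl[ \tfrac{V_1^\eps(m^i_\eps)}{\eps} + \tfrac{1}{2}\Tr(D^2 V_1^\eps(m^i_\eps)\bSig^i_\eps) + V_2(m^i_\eps) - \tfrac{1}{2}\log\det \bSig^i_\eps - \tfrac{d}{2} \Bigr] + \log\!\Bigl(\sum_{j=1}^n \beta^j\Bigr) + o(1),
\end{equation*}
where the $\tfrac{d}{2}\log(2\pi\eps)$ contributions from the entropy and from $\log Z_{\mu, \eps}$ (via \cref{lem:normconst}) cancel exactly. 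The liminf inequality then follows from $V_1^\eps(m^i_\eps)/\eps \geq 0$ together with the locally uniform convergence of $D^2 V_1^\eps \to D^2 V_1$ (noted in the remark after the assumptions) and the algebraic rearrangement which, once $m^i = x^i$ is identified with a minimizer, recovers $\sum_i \alpha^i D_{\mathrm{KL}}(N(m^i, \bSig^i)\|N(m^i,(D^2 V_1(m^i))^{-1})) + D_{\mathrm{KL}}(\balpha \| \bbeta)$, exactly as in the passage from \cref{eq:F0-1} to \cref{eq:F0-2}. For the limsup, given $(\balpha, \bm, \bSig)$ admissible with $m^i = x^i$, the recovery sequence is $\alpha^i_\eps = \alpha^i$, $\bSig^i_\eps = \bSig^i$, and $m^i_\eps = x^i_\eps$, so that $V_1^\eps(m^i_\eps) = 0$ by (A-2) and $m^i_\eps \to m^i$ by (A-4); plugging into the same expansion gives an equality in the limit.
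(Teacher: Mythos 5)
Your overall architecture matches the paper's: compactness, an entropy-splitting lemma for the mixture density based on the separation $\xi_2$, componentwise Taylor expansion, and the Laplace asymptotics of $Z_{\mu,\eps}$; your step 2 and step 3 are essentially the paper's \cref{lem:dkl-asym3} and \cref{cor:dkl-asym4}, including the same treatment of the cross terms on $B(m^i_\eps,\delta)$ via $\log(x+y)\le \log x + y/x$ and of the tails via Gaussian concentration. The one place where you genuinely diverge is compactness. The paper does not rerun Steps 1--4 of \cref{lem:compactness} componentwise; it observes that by monotonicity of the logarithm
$$
D_{\text{KL}}\Big(\sum_i \alpha^i_\eps \nu^i_\eps \,\Big\|\, \mu_\eps\Big) \;\ge\; \sum_j \alpha^j_\eps \log \alpha^j_\eps + \sum_j \alpha^j_\eps\, D_{\text{KL}}(\nu^j_\eps \| \mu_\eps),
$$
so that $\alpha^j_\eps \ge \xi_1$ immediately yields a uniform bound on each $D_{\text{KL}}(\nu^j_\eps\|\mu_\eps)$ and the single-Gaussian compactness lemma applies verbatim to each component. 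This buys a two-line reduction in place of your componentwise adaptation.

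One point in your plan needs attention: as written, your compactness argument risks circularity. To reproduce the analogue of \cref{eq:dkl_asym1} for the mixture (which is what drives Steps 1--4: the lower bound on $\log\det\bSig^i_\eps$, the bound $\E^{\nu^i_\eps}V_1^\eps \le \eps(C+C\log\det\bSig^i_\eps)$ used for mass concentration, etc.) you must relate the mixture entropy $\int\rho_\eps\log\rho_\eps$ to the component quantities $\log\det\bSig^i_\eps$, but your two-sided expansion \eqref{eq:mix-entropy} is established only \emph{after} the compactness bounds are in hand (you need $\det\bSig^i_\eps$ bounded below to control the crude bound on $\log\rho_\eps$, exactly as in the paper's \cref{eq:bd1}). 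The fix is that only the unconditional one-sided inequality $\int\rho_\eps\log\rho_\eps \ge \sum_i\alpha^i_\eps\big(\log\alpha^i_\eps + \int\rho^i_\eps\log\rho^i_\eps\big)$ is needed for compactness, and this is again just monotonicity of $\log$ --- the same trick the paper applies directly at the level of the Kullback--Leibler divergence. You should state explicitly that compactness uses only this one-sided bound, and reserve the matching upper bound (where $\xi_2$ and the established eigenvalue bounds enter) for the expansion in your step 2. With that ordering made explicit, your proof goes through.
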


\begin{remark}
The right hand side of $G$ consists of two parts: the first part is a weighted relative entropy which measures the discrepancy between two Gaussians, and the second part is the relative entropy between sums of Dirac masses
at $\{x^j\}_{j=1}^n$ with weights $\balpha$ and $\bbeta$ respectively. This has the same spirit as the entropy splitting used in \cite[Lemma 2.4]{MS14}.
\end{remark}

Before we prove \cref{thm:gamma-2}, we consider the minimization of the limit functional $G$. First let $\xi_1, \xi_2$ be such that 
\be\label{eq:xi}
 \xi_1 > 0, \quad 0 < \xi_2 \leq \min_{i\neq j} |x^i - x^j|,
\en
 where $\{x^i\}_{i=1}^n$ are the minimizers of $V_1$. To minimize $G$, without loss of generality, we may choose $m^i = \overline{m}^i := x^i$. Then the weighted relative entropy in the first term  in the definition \cref{eq:limit-G} of $G$ vanishes if we set $\bSig^i = \overline{\bSig}^i := D^2 V_1(x^i)^{-1}$. The relative entropy of the weights also vanishes if we choose the weight $\balpha = \overline{\balpha} := \bbeta$. To summarize, the minimizer $(\overline{\balpha}, \overline{\bm}, \overline{\bSig})$ of $G$ is given by
\be\label{eq:minimizermix}
\overline{m}^i = x^i,\ \overline{\bSig}^i = D^2 V_1(x^i)^{-1}, \overline{\alpha}^i = \beta^i,
\en
and $G(\overline{\balpha}, \overline{\bm}, \overline{\bSig}) = 0$. 
The following corollary is a direct consequence of the $\Gamma$-convergence of $G_\eps$.
\begin{corollary}\label{cor:convmin}
Let $\{(\balpha_\eps, \bm_\eps, \bSig_\eps)\}$ be a family of
minimizers of $\{G_\eps\}$. Then there exists a subsequence $\{\eps_k\}$ such that $(\balpha_{\eps_k}, \bm_{\eps_k}, \bSig_{\eps_k}) \gt (\overline{\balpha}, \overline{\bm}, \overline{\bSig})$ and that $G_{\eps_k}(\balpha_{\eps_k}, \bm_{\eps_k}, \bSig_{\eps_k}) \gt G(\overline{\balpha}, \overline{\bm}, \overline{\bSig})$. Moreover, $(\overline{\balpha}, \overline{\bm}, \overline{\bSig})$ is a minimizer of $G$ and $G(\overline{\balpha}, \overline{\bm}, \overline{\bSig}) = 0$. 
\end{corollary}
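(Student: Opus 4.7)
The plan is to mimic the argument used for \cref{cor:covmin}: produce an explicit competitor to show that the minimized energies $G_\eps(\balpha_\eps, \bm_\eps, \bSig_\eps)$ stay bounded, extract a convergent subsequence by compactness, and then invoke \cref{thm:fgamma} together with the $\Gamma$-convergence established in \cref{thm:gamma-2}. The final identification of the limit with the explicit tuple in \cref{eq:minimizermix} uses the fact that $G$ vanishes there.

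First I would check that $\limsup_{\eps \lgt 0} G_\eps(\balpha_\eps, \bm_\eps, \bSig_\eps) < \infty$. A natural competitor, inspired by \cref{eq:minimizermix}, is $\tilde{\balpha}_\eps = \bbeta$, $\tilde{m}^i_\eps = x^i_\eps$, and $\tilde{\bSig}^i_\eps = (D^2 V_1^\eps(x^i_\eps))^{-1}$. By \cref{assump} (A-2)--(A-4) the latter is positive definite for $\eps \ll 1$ and satisfies $\min_{i\neq j}|\tilde{m}^i_\eps - \tilde{m}^j_\eps| \geq \xi_2$ provided $\xi_2$ is chosen as in \cref{eq:xi}; similarly $\bbeta \in \Delta^n_{\xi_1}$ for sufficiently small $\xi_1$. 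Then by the same asymptotic expansion used in the proof of \cref{thm:gamma-2} (and of \cref{cor:covmin}) applied componentwise, $G_\eps(\tilde{\balpha}_\eps, \tilde{\bm}_\eps, \tilde{\bSig}_\eps)$ converges to $G(\bbeta, \bm^\ast, \bSig^\ast) = 0$, where $\bm^\ast, \bSig^\ast$ are the natural limits. Since $(\balpha_\eps, \bm_\eps, \bSig_\eps)$ is a minimizer, the desired $\limsup$-bound follows.

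Next I would prove compactness of the minimizing sequence, the main technical step. Because each $\alpha_\eps^i \geq \xi_1 > 0$, the $i$-th component of $\nu_\eps$ carries a nonvanishing share of mass, and the same kind of Laplace/concentration estimates used in the four-step proof of \cref{lem:compactness} can be applied to it: one first shows that each rescaled covariance $\bSig^i_\eps$ satisfies $0 < \liminf \lambda_{\min}(\bSig^i_\eps) \leq \limsup \Tr(\bSig^i_\eps) < \infty$, using Assumption (A-5) to rule out escape to infinity and the $-\frac12\log\det\bSig^i$ term in \cref{eq:dkl-3} to prevent degeneracy; and then that $\mathrm{dist}(m^i_\eps, \EE) \to 0$. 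The separation constraint $|m^i_\eps - m^j_\eps| \geq \xi_2$ (with $\xi_2$ as in \cref{eq:xi}) forces the limits of the $m^i_\eps$ to be \emph{distinct} points of $\EE$, so after extraction of a subsequence we may relabel and assume $m^i_\eps \to x^i$. Finally $\balpha_\eps$ lies in the compact simplex $\Delta^n_{\xi_1}$, so a further subsequence converges to some $\overline{\balpha}$.

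Having a convergent subsequence $(\balpha_{\eps_k}, \bm_{\eps_k}, \bSig_{\eps_k}) \to (\overline{\balpha}, \overline{\bm}, \overline{\bSig})$ with bounded energies, \cref{thm:fgamma} (applied using the $\Gamma$-convergence $G_\eps \to G$ from \cref{thm:gamma-2}) immediately yields that the limit minimizes $G$, and that $G_{\eps_k}(\balpha_{\eps_k}, \bm_{\eps_k}, \bSig_{\eps_k}) \to G(\overline{\balpha}, \overline{\bm}, \overline{\bSig})$. To close, I would use the explicit tuple in \cref{eq:minimizermix}, which lies in $S_{\bxi}$ under \cref{eq:xi} and achieves $G = 0$; since $G \geq 0$ (both KL terms are nonnegative), the minimum value of $G$ is $0$. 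I expect the main obstacle to be the quantitative componentwise compactness in Step~2 — extending the single-Gaussian estimates of \cref{lem:compactness} to a mixture requires carefully localizing the Laplace/concentration arguments to balls around each $x^i$ and exploiting the lower bound $\alpha^i_\eps \geq \xi_1$ to transfer mass control from $\nu_\eps$ to each component $\nu^i_\eps$.
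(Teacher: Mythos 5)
Your proposal is correct and follows essentially the same route the paper intends: the paper states \cref{cor:convmin} as a direct consequence of \cref{thm:gamma-2} via \cref{thm:fgamma}, with boundedness of the minimal energies supplied by a competitor (mirroring the proof of \cref{cor:covmin}), compactness supplied by \cref{lem:compactness2}, and the identification of the minimizer and of the zero minimum value carried out in the discussion around \cref{eq:minimizermix}. Your only additional care — taking $\xi_1$ small enough that $\bbeta \in \Delta^n_{\xi_1}$ and using the separation constraint to force the limits of the $m^i_\eps$ to be distinct elements of $\EE$ — is exactly the implicit content of the paper's choice \cref{eq:xi}.
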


For a non-Gaussian measure $\mu_\eps$ with multiple modes, i.e., $n > 1$ in the \cref{assump}, we have seen in \cref{cor:sgaussian} that the Kullback-Leibler divergence between $\mu_\eps$ and the best Gaussian measure selected from $\A$ remains positive as $\eps \lgt 0$. However, this gap is filled by using Gaussian mixtures, namely, with $\nu_\eps$ being chosen as the best Gaussian mixture, the Kullback-Leibler divergence $D_{\text{KL}}(\nu_\eps || \mu_\eps) \lgt 0$ as $\eps \lgt 0$. 


Similarly to the proof of \cref{thm:gamma}, \cref{thm:gamma-2} 
follows directly from \cref{cor:dkl-asym4} below, whose proof requires several lemmas.  We start by showing the compactness of $\{G_\eps\}$.
\begin{lemma}\label{lem:compactness2}
Let $G_\eps$ be defined by \cref{eq:dkl-2}. Fix $\bxi  = (\xi_1, \xi_2)$ satisfying the condition \eqref{eq:xi}. 
Let $\{(\balpha_\eps, \bm_\eps, \bSig_\eps)\}$ be a sequence in $S_{\bxi} \times \mathcal{S}_{\geq }(\R, d)$ such that 
$$\limsup_{\eps \lgt 0} G_\eps(\balpha_\eps, \bm_\eps, \bSig_\eps) < \infty.$$ Then 
\be\label{eq:compactness2}
\liminf_{\eps\lgt 0} \min_i \lambda_{\min}(\bSig^i_\eps) > 0, \ \limsup_{\eps \lgt 0} \max_i   \Tr(\bSig^i_\eps) < \infty
\en
and $\text{dist}(m^i_\eps, \EE) \lgt 0$ as $\eps \lgt 0$. In particular, for any $i$, there exists $ j = j(i)\in \{1,2,\cdots, n\}$ and a subsequence $\{m^i_k\}_{k\in \N}$ of $\{m^i_\eps\}$ such that $m_{k} \gt x_{j}$ as $k \gt\infty$.
\end{lemma}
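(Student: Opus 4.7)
My plan is to reduce the statement to its single-Gaussian counterpart, \cref{lem:compactness}, by showing that for each fixed $i \in \{1, \ldots, n\}$ one has
\be\label{eq:Fbd}
\limsup_{\eps \lgt 0} F_\eps(m^i_\eps, \bSig^i_\eps) < \infty.
\en
Once \cref{eq:Fbd} is in hand, the bounds $\liminf_{\eps \lgt 0} \lambda_{\min}(\bSig^i_\eps) > 0$, $\limsup_{\eps \lgt 0} \Tr(\bSig^i_\eps) < \infty$, and $\text{dist}(m^i_\eps, \EE) \lgt 0$ follow directly from \cref{lem:compactness} applied to each fixed index $i$. Since $\{1, \ldots, n\}$ is finite, taking max/min over $i$ preserves these bounds and yields \cref{eq:compactness2}, and the subsequential convergence of $\{m^i_k\}$ to some $x_{j(i)} \in \EE$ follows from finiteness of $\EE$.

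The key step is an entropy lower bound that decouples the mixture. Let $\rho = \sum_j \alpha^j_\eps \rho^j$ denote the density of $\nu_\eps$, where $\rho^j$ is the density of $\nu^j_\eps = N(m^j_\eps, \eps \bSig^j_\eps)$. The pointwise inequality $\rho \geq \alpha^i_\eps \rho^i$ yields $\log \rho(x) \geq \log \alpha^i_\eps + \log \rho^i(x)$ for every $x \in \R^d$, since $\rho^i > 0$ everywhere. Multiplying this inequality by $\alpha^i_\eps \rho^i(x)$, integrating in $x$, and summing over $i$ yields the decoupled bound
\be\label{eq:entropylb}
\int \rho \log \rho \,dx \geq \sum_{i=1}^n \alpha^i_\eps \log \alpha^i_\eps + \sum_{i=1}^n \alpha^i_\eps \int \rho^i \log \rho^i \,dx.
\en
Substituting \cref{eq:entropylb} into \cref{eq:dkl-3}, and distributing both $\E^{\nu_\eps} V_k = \sum_i \alpha^i_\eps \E^{\nu^i_\eps} V_k$ (for $k=1,2$) and $\log Z_{\mu,\eps} = \sum_i \alpha^i_\eps \log Z_{\mu,\eps}$ across the weights, the per-component terms reassemble into individual Kullback-Leibler divergences to give
\be\label{eq:Glb}
G_\eps(\balpha_\eps, \bm_\eps, \bSig_\eps) \geq \sum_{i=1}^n \alpha^i_\eps \log \alpha^i_\eps + \sum_{i=1}^n \alpha^i_\eps F_\eps(m^i_\eps, \bSig^i_\eps).
\en

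Now $F_\eps(m^i_\eps, \bSig^i_\eps) = D_{\text{KL}}(\nu^i_\eps || \mu_\eps) \geq 0$ and $\sum_i \alpha^i_\eps \log \alpha^i_\eps \geq -\log n$ on the simplex, so \cref{eq:Glb} combined with $\alpha^i_\eps \geq \xi_1$ and $\limsup_{\eps \lgt 0} G_\eps(\balpha_\eps, \bm_\eps, \bSig_\eps) =: M < \infty$ gives, for $\eps$ sufficiently small,
$$
\xi_1 F_\eps(m^i_\eps, \bSig^i_\eps) \leq \sum_j \alpha^j_\eps F_\eps(m^j_\eps, \bSig^j_\eps) \leq G_\eps(\balpha_\eps, \bm_\eps, \bSig_\eps) + \log n \leq M + \log n + o(1),
$$
which proves \cref{eq:Fbd} and completes the reduction. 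The main obstacle is finding the correct direction of the entropy inequality: the usual concavity of differential entropy provides only the opposite (upper) bound on $\int \rho \log \rho$, which is useless here; by contrast, the elementary pointwise comparison $\log \rho \geq \log(\alpha^i \rho^i)$ produces a cleanly decoupled lower bound that, together with the strict positivity $\alpha^i_\eps \geq \xi_1$ enforced by membership in $S_{\bxi}$, converts into a per-component bound on $F_\eps$. The separation condition $|m^i_\eps - m^j_\eps| \geq \xi_2$ plays no explicit role in this argument but is implicit in the assumption that $G_\eps$ remains finite along the sequence.
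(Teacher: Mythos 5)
Your proposal is correct and follows essentially the same route as the paper: the pointwise bound $\log\rho \geq \log(\alpha^i_\eps \rho^i)$ is exactly the monotonicity-of-logarithm step the paper uses to obtain $G_\eps \geq \sum_j \alpha^j_\eps\log\alpha^j_\eps + \sum_j \alpha^j_\eps D_{\text{KL}}(\nu^j_\eps||\mu_\eps)$, after which both arguments use $\alpha^j_\eps \geq \xi_1$ to get a uniform per-component bound and invoke \cref{lem:compactness}. The only cosmetic difference is that you bound the entropy sum by $-\log n$ where the paper uses $n\min_{\alpha}\alpha\log\alpha$; both are fine.
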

\begin{proof}
We write $M = \limsup_{\eps \lgt 0} G_\eps(\balpha_\eps, \bm_\eps, \bSig_\eps)$ and 
$$
\nu_\eps = \sum_{i=1}^n \alpha_\eps^i \nu^i_\eps
$$
where $\nu^i_\eps = N(m^i_\eps, \eps \bSig^i_\eps)$. Then we get
\begin{align*}
D_{KL}(\nu_\eps|| \mu_\eps)& = \sum_{j=1}^n \alpha^j_\eps \;\E^{\nu^j_\eps} \log \left( \sum_{i} \alpha^i_\eps \frac{ d\nu^i_\eps}{d\mu_\eps}\right)\\
& \geq \sum_{j=1}^n \alpha^j_\eps  \; \E^{\nu^j_\eps} \log \left( \alpha^j_\eps \; \frac{d \nu^j_\eps}{d\mu_\eps}\right)\\
& = \sum_{j=1}^n \alpha^j_\eps\log(\alpha^j_\eps) + \sum_{j=1}^n  \alpha^j_\eps  \;\E^{\nu^j_\eps} \log \left( \frac{d \nu^j_\eps}{d\mu_\eps}\right)\\
& = \sum_{j=1}^n \alpha^j_\eps\log(\alpha^j_\eps) + \sum_{j=1}^n \alpha^j_\eps D_{KL}(\nu^j_\eps || \mu_\eps )
\end{align*}
where the inequality follows simply from the monotonicity of the logarithmic function. 
As each of term $D_{KL}(\nu^j_\eps || \mu_\eps)$ is non-negative, this implies the bound
$$
 D_{KL}(\nu^j_\eps || \mu_\eps ) \leq \frac{1}{\alpha^j_\eps} \left(M - n \min_{\alpha \in [0,1]} \alpha \log \alpha \right).
$$ 
Using the lower bound $\alpha^j_\eps > \xi_1$ which holds by assumption we get a uniform upper bound on $D_{KL}(\nu^j_\eps || \mu_\eps )$ which in turn permits to invoke Lemma~\cref{lem:compactness}.

\end{proof}

\begin{lemma}\label{lem:dkl-asym3}
Let $\{(\balpha_\eps, \bm_\eps, \bSig_\eps)\}$ be a sequence in $S_{\bxi} \times S_{\geq}(\R, d)$ with $\bxi = (\xi_1, \xi_2)$ satisfying \eqref{eq:xi} and such that  
$$
c_1 \leq \liminf_{\eps\lgt 0} \min_i \lambda_{\min}(\bSig^i_\eps)  < \limsup_{\eps \lgt 0} \max_i |m^i_\eps| \vee \Tr(\bSig^i_\eps) \leq C_1 < \infty.
$$
 Then  
\be\bea\label{eq:dkl-asym3}
&G_\eps(\balpha_\eps, \bm_\eps, \bSig_\eps)\\
 & = \sum_{i=1}^n \alpha^i_\eps \left(\frac{V^\eps_1(m^i_\eps)}{\eps} + V_2(m^i_\eps)  + \frac{1}{2} \Tr(D^2 V^\eps_1(m^i_\eps) \cdot \bSig^i_\eps) - \frac{1}{2}\log\left(\det \bSig^i_\eps\right) \right)\\
&  + \sum_{i=1}^n \alpha^i_\eps \log \alpha^i_\eps  - \frac{d}{2}  + \log Z_{\mu, \eps} + r_\eps.
\ena\en
 where $r_\eps \leq C\eps$ with $C = C(c_1, C_1, M_V,\xi_2)$.
\end{lemma}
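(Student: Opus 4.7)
The plan is to start from the representation \cref{eq:dkl-3},
\[
G_\eps(\balpha_\eps, \bm_\eps, \bSig_\eps) = \int_{\R^d} \rho_\eps\log\rho_\eps\,dx + \frac{1}{\eps}\E^{\nu_\eps} V_1^\eps + \E^{\nu_\eps} V_2 + \log Z_{\mu,\eps},
\]
where $\rho_\eps$ denotes the density of $\nu_\eps = \sum_i \alpha_\eps^i \nu_\eps^i$ and $\nu_\eps^i = N(m_\eps^i, \eps\bSig_\eps^i)$, and then to treat the three nontrivial terms separately.

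First, by linearity, $\E^{\nu_\eps} V_j = \sum_i \alpha_\eps^i \E^{\nu_\eps^i} V_j$ for $j=1,2$. For each component $\nu_\eps^i$, the uniform bounds on $|m_\eps^i|$ and on the eigenvalues of $\bSig_\eps^i$ put us exactly in the setting of \cref{lem:dkl-asym2}; repeating the Taylor expansion there (a second-order expansion of $V_2$ and a fourth-order expansion of $V_1^\eps$ around $m_\eps^i$, combined with the growth condition in \cref{assump} (A-1)) yields
\[
\E^{\nu_\eps^i}V_2 = V_2(m_\eps^i) + O(\eps), \qquad \frac{1}{\eps}\E^{\nu_\eps^i} V_1^\eps = \frac{V_1^\eps(m_\eps^i)}{\eps} + \tfrac12\Tr\!\big(D^2 V_1^\eps(m_\eps^i)\bSig_\eps^i\big) + O(\eps),
\]
where the implicit constants depend only on $c_1, C_1, M_V$. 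Weighting by $\alpha_\eps^i\le 1$ and summing gives the potential-plus-trace contribution on the right-hand side of \cref{eq:dkl-asym3}, up to an $O(\eps)$ error.

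The main step is to control the mixture entropy $\int \rho_\eps \log \rho_\eps\, dx$. Writing $\rho_\eps = \sum_i \alpha_\eps^i \rho_\eps^i$ where $\rho_\eps^i$ is the density of $\nu_\eps^i$, we have, using linearity again,
\[
\int \rho_\eps \log \rho_\eps\,dx = \sum_i \alpha_\eps^i \int \rho_\eps^i \log\!\Big(\alpha_\eps^i \rho_\eps^i \big(1 + \textstyle\sum_{j\ne i}\tfrac{\alpha_\eps^j \rho_\eps^j}{\alpha_\eps^i \rho_\eps^i}\big)\Big) dx.
\]
The identity $\log(\alpha_\eps^i \rho_\eps^i) = \log\alpha_\eps^i + \log\rho_\eps^i$ produces the entropy-splitting terms
\[
\sum_i \alpha_\eps^i \log\alpha_\eps^i + \sum_i \alpha_\eps^i \int \rho_\eps^i \log\rho_\eps^i\,dx,
\]
and an explicit Gaussian computation gives $\int \rho_\eps^i\log\rho_\eps^i\,dx = -\tfrac{1}{2}\log\det(2\pi\eps\bSig_\eps^i) - \tfrac{d}{2}$, which after recombining with the analogous constants in $\log Z_{\mu,\eps}$ yields the $-\tfrac{1}{2}\log\det\bSig_\eps^i$ and $-\tfrac{d}{2}$ terms in \cref{eq:dkl-asym3}. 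The remaining task is to bound the cross-term
\[
E_\eps := \sum_i \alpha_\eps^i \int \rho_\eps^i \,\log\!\Big(1 + \sum_{j\ne i}\tfrac{\alpha_\eps^j \rho_\eps^j}{\alpha_\eps^i \rho_\eps^i}\Big)\,dx,
\]
and it is here that I expect the main obstacle to lie. The key is the mean-separation $|m_\eps^i - m_\eps^j|\ge \xi_2 > 0$: on the ball $B(m_\eps^i, \xi_2/3)$, where $\rho_\eps^i$ has all but an exponentially small fraction of its mass, each ratio $\rho_\eps^j/\rho_\eps^i$ is bounded by $\exp(-c\xi_2^2/\eps)$ for some $c=c(c_1,C_1) > 0$ (a direct Gaussian density comparison). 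On the complement $B(m_\eps^i, \xi_2/3)^c$, $\rho_\eps^i$ itself is exponentially small in $1/\eps$, so even the a priori polynomial-in-$1/\eps$ growth of $|\log(1+\sum_j(\cdots))|$ leaves the integrand exponentially small. Together with $\alpha_\eps^i \ge \xi_1$ in the denominator, this gives $|E_\eps| \le e^{-c'/\eps}$ for some $c' = c'(c_1, C_1, \xi_2) > 0$, hence $|E_\eps| \le C\eps$.

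Assembling the three contributions and collecting the Taylor residuals and the exponentially small cross term into a single $r_\eps$ with $|r_\eps|\le C\eps$ for $C = C(c_1,C_1,M_V,\xi_2)$ yields \cref{eq:dkl-asym3}.
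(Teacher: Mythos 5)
Your proposal follows essentially the same route as the paper: the same starting decomposition \cref{eq:dkl-3}, the same Taylor-expansion treatment of the potential terms via the argument of \cref{lem:dkl-asym2}, the same entropy splitting into $\sum_i\alpha^i_\eps\log\alpha^i_\eps$ plus individual Gaussian entropies, and the same use of the mean separation $\xi_2$ together with Gaussian concentration to show the cross terms are exponentially small. One detail in your treatment of the cross term on the ball needs repair: the pointwise bound $\rho^j_\eps/\rho^i_\eps\le \exp(-c\xi_2^2/\eps)$ on $B(m^i_\eps,\xi_2/3)$ is not valid in general, since the exponent compares $|x-m^j_\eps|^2/\lambda_{\max}(\bSig^j_\eps)$ against $|x-m^i_\eps|^2/\lambda_{\min}(\bSig^i_\eps)$ and can have the wrong sign when $C_1/c_1>4$. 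You can fix this either by shrinking the ball radius $\delta$ so that $\delta^2/c_1<(\xi_2-\delta)^2/C_1$, or, as the paper does, by applying $\log(1+t)\le t$ under the integral so that $\rho^i_\eps$ cancels and only the numerator $\sum_{j\ne i}\alpha^j_\eps\rho^j_\eps$ needs to be exponentially small on the ball, which follows directly from $|x-m^j_\eps|\ge\xi_2-\delta$ there. With that adjustment your argument is complete and matches the paper's.
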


\begin{proof}
By assumption, we know from \cref{eq:dkl-3}  that
$$
G_\eps(\balpha_\eps, \bm_\eps, \bSig_\eps) = \int \rho_\eps(x) \log \rho_\eps(x)dx + \frac{1}{\eps} \E^{\nu_\eps} V^\eps_1(x) + \E^{\nu_\eps} V_2(x) + \log Z_{\mu, \eps}
$$
where $\rho_\eps = \sum_{i=1}^n \alpha^i_\eps \rho^i_\eps$ is the probability density of the measure $\nu_\eps$.
First of all, applying the same Taylor expansion arguments used 
to obtain \cref{eq:dkl-asym2}, one can deduce that
\be\bea\label{eq:expV}
&\frac{1}{\eps} \E^{\nu_\eps} V^\eps_1(x) + \E^{\nu_\eps} V_2(x) \\
&= \sum_{i=1}^n \alpha^i_\eps \left(\frac{V^\eps_1(m^i_\eps)}{\eps} + \frac{1}{2} \Tr\left(\nabla^2 V^\eps_1(m^i_\eps) \cdot \bSig^i_\eps\right) +  V_2(m^i_\eps) \right) + r_{1,\eps}
\ena\en
with $r_{1,\eps} \leq C\eps$ and $C = C(C_1, c_1, M_V)$. 
Next, we claim that the entropy of $\rho_\eps$ can be rewritten as 
\be\label{eq:rhologrho-0}
\int \rho_\eps(x) \log \rho_\eps(x)dx  = \sum_{i=1}^n \alpha^i_\eps \left(\int \rho^i_\eps(x) \log \rho^i_\eps(x)dx + \log \alpha^i_\eps\right) + r_{2, \eps}
\en
where $r_{2,\eps} \leq e^{-\frac{C}{\eps}}$ when $\eps \ll 1$ with the constant $C = C(C_1, c_2, \xi_2)$. By definition, 
$$
\int \rho_\eps(x) \log \rho_\eps(x)dx  = \sum_{i=1}^n \alpha^i_\eps \int 
\rho^i_\eps(x)  \log \left(\sum_{j=1}^n \alpha^j_\eps \rho^j_\eps(x)\right)dx,
$$
so it suffices to show that for each $i \in \{ 1, \ldots,n\}$ we have 
\be\label{eq:rhologrho-1}
\int \rho^i_\eps(x)  \log \left(\sum_{j=1}^n \alpha^j_\eps \rho^j_\eps(x)\right)dx = \int \rho^i_\eps(x) \log \rho^i_\eps(x)dx + \log \alpha^i_\eps + r_{2,\eps}
\en
with $r_{2, \eps} \leq e^{-\frac{C}{\eps}}$. Indeed, the monotonicity of the logarithmic function yields
\be\label{eq:rhologrho-2}
\int \rho^i_\eps(x)  \log \left(\sum_{j=1}^n \alpha^j_\eps \rho^j_\eps(x)\right) dx\geq \int \rho^i_\eps(x) \log \rho^i_\eps(x)dx + \log \alpha^i_\eps. 
\en
In order to show the matching lower bound we first recall that the means $m^i_\eps$ of the $\nu^i_\eps$ are well separated by assumption,  $\min_{j\neq i} |m^i_\eps - m^j_\eps| > \xi_2$. Let $\delta \ll \frac{\xi}{2}$ to be fixed below  and set $B^i_\delta =B(m^i_\eps, \delta)$  Then we write
\be\label{eq:rhologrho-3}
\bea
  \int  & \rho^i_\eps  \log \Big(\sum_{j=1}^n \alpha^j_\eps \rho^j_\eps\Big) \\
 &=  \int \rho^i_\eps \log\left(\alpha^i_\eps \rho^i_\eps\right) 
 +   \int_{B^i_\delta} \rho^i_\eps  \Big( \log\Big(\sum_{j=1}^n \alpha^j_\eps \rho^j_\eps\Big) - \log\Big(\alpha^i_\eps \rho^i_\eps\Big) \Big) \\
& \qquad  + \int_{(B^i_\delta)^c} \rho^i_\eps\Big( \log\Big(\sum_{j=1}^n \alpha^j_\eps \rho^j_\eps\Big) - \log\Big(\alpha^i_\eps \rho^i_\eps\Big)\Big) \\
&=:\Big( \int \rho^i_\eps \log  \rho^i_\eps  + \log \alpha^i_\eps  \Big) + E^1_\eps + E^2_\eps.
\ena
\en

We first show that the error term  $E^2_\eps$ is exponentially small. To that end, we first drop the exponential term in the Gaussian density to  obtain the crude bound 
\be\bea\label{eq:bd1}
\log \Big(\sum_{j=1}^n \alpha^j_\eps \rho^j_\eps\Big) 
\leq \log \Big(\sum_{j=1}^n \alpha^j_\eps  \frac{1}{\sqrt{(2\pi\eps)^d \det \bSig^j_\eps}} \Big)\leq \frac{d}{2} \log \eps^{-1} +C.
\ena
\en
where in the second inequality we use the fact that $\det \bSig^i_\eps$ is bounded away from zero, which has been established in \cref{eq:compactness2}. Moreover, by definition we have
\be\label{eq:bd2}
- \log \Big(\alpha^i_\eps \rho^i_\eps\Big) \leq \frac{d}{2} \log \eps ^{-1} + C + \frac{|x - m^i_\eps|^2}{\eps}
\en
 Plugging bounds \cref{eq:bd1} and \cref{eq:bd1} in  and using Gaussian concentration as well as the lower bound on $\lambda_{\min}$ established in \cref{lem:compactness2}
\be\bea\label{eq:rhologrho-4}
E^2_\eps & \leq
\int_{(B^i_\delta)^c} \rho^i_\eps(x)  
  \Big( \frac{d}{2} \log \eps^{-1} +C +\frac{|x - m^i_\eps|^2}{\eps} \Big)  dx
 \leq C \big( \log \eps^{-1}  + \eps^{-1} \big) e^{-\frac{C\delta}{\eps}}
\ena
\en
when $\eps \ll 1$. Next, we want to bound $E^1_\eps$. Notice that $m^j_\eps \gt m^j$ for $j = 1, \cdots, n$, hence if $x\in B_\delta^i$ and if $\delta < \xi_1$, then $|x - m^j_\eps| > \xi_1 - \delta $ for any $j \neq i$ when $\eps \ll 1$. As a consequence, 
\be
\int_{B^i_\delta} \sum_{j=1, j \neq i}^n \alpha^j_\eps \rho^j_\eps \leq C\eps^{-\frac{d}{2}} e^{-\frac{C (\xi_1 - \delta)^2}{\eps}}.
\en
This together with the elementary inequality 
$$\log(x + y) = \log (x) + \int_x^{x+y} \frac{1}{t} dt \leq \log x + \frac{y}{x}$$
 for $x, y > 0$ implies
\be\label{eq:rhologrho-5}
\bea
  E^1_\eps & = \int_{B^i_\delta} \rho^i_\eps\Big( \log\Big(\alpha^i_\eps \rho^i_\eps +  \sum_{j=1, j\neq i}^n \alpha^j_\eps \rho^j_\eps\Big) - \log\Big(\alpha^i_\eps \rho^i_\eps\Big) \Big)\\
&
  \leq \int_{B^i_\delta} \frac{\sum_{j=1, j \neq i}^n \alpha^j_\eps \rho^j_\eps}{ \alpha^i_\eps}\\
&  \leq C
\delta^d \eps^{-\frac{d}{2}} e^{-\frac{C (\xi_1 - \delta)^2}{\eps}}.
\ena
\en
where we used that $\alpha^i_\eps$ is bounded below from zero. 
Hence \cref{eq:rhologrho-1} follows directly from \cref{eq:rhologrho-2}-\cref{eq:rhologrho-5}. 
 
Finally, \cref{eq:dkl-asym3} follows from combining \cref{eq:expV}, \cref{eq:rhologrho-0} and the identity
$$ \int \rho_\eps^i(x) \log \rho_\eps^i(x)dx = -\frac{1}{2}\log \left((2\pi\eps)^d\det \bSig^i_\eps\right) - \frac{d}{2}.$$  
\end{proof}

\begin{remark}\label{rem:assum}
The assumption that $\min_{j\neq i} |m^i_\eps - m^j_\eps| > \xi_2 > 0$ is the crucial condition that allows us to express the entropy of the Gaussian mixture in terms of the mixture of entropies of individual Gaussian (i.e. the equation \cref{eq:rhologrho-0}), leading to the asymptotic formula \cref{eq:dkl-asym3}. Neither formula \cref{eq:rhologrho-0} nor \cref{eq:dkl-asym2} is likely to be true without such an assumption since the cross entropy terms are not 
negligible.
\end{remark}
The following corollary immediately follows from Lemma~\cref{lem:dkl-asym3} by plugging in the Laplace approximation of the normalization constant $Z_{\mu, \eps}$ given in Lemma~\cref{lem:normconst} and rearranging the terms.

\begin{corollary}\label{cor:dkl-asym4}
Suppose that $\{(\balpha_\eps, \bm_\eps, \bSig_\eps)\}$ satisfy the same assmption as in \cref{lem:compactness2}. If $\limsup_{\eps \lgt 0} G_\eps(\balpha_\eps, \bm_\eps, \bSig_\eps) < \infty$, then  
\be\bea\label{eq:dkl-asym4}
&G_\eps(\balpha_\eps, \bm_\eps, \bSig_\eps) \\
& = 
\sum_{i=1}^n \alpha^i_\eps \left(\frac{V^\eps_1(m^i_\eps)}{\eps} + V_2(m^i_\eps)   -\frac{d}{2}  + \frac{1}{2} \Tr(D^2 V^\eps_1(m^i_\eps) \cdot \bSig^i_\eps) \right)\\
& + \sum_{i=1}^n \alpha^i_\eps \Big( \log \alpha^i_\eps - \frac{1}{2}\log\left(\det \bSig^i_\eps  \right)+  \log \Big(\sum_{j=1}^n \beta^j \Big)  \Big) + o(1).
\ena
\en
\end{corollary}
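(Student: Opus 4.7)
The plan is to combine Lemma~\ref{lem:dkl-asym3} with the Laplace approximation from Lemma~\ref{lem:normconst} and rearrange. The hypothesis $\limsup_{\eps \lgt 0} G_\eps(\balpha_\eps, \bm_\eps, \bSig_\eps) < \infty$ allows us to invoke Lemma~\ref{lem:compactness2}, which produces uniform two-sided bounds on the minimum eigenvalues and the traces of the $\bSig^i_\eps$, and also forces $\text{dist}(m^i_\eps, \EE) \to 0$ (so in particular $|m^i_\eps|$ is uniformly bounded). These are precisely the quantitative bounds required by Lemma~\ref{lem:dkl-asym3}, so the asymptotic expansion \eqref{eq:dkl-asym3} with remainder $r_\eps = O(\eps)$ holds along the sequence.

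Next I would take logarithms in Lemma~\ref{lem:normconst} to obtain
\[
\log Z_{\mu,\eps} = \tfrac{d}{2}\log(2\pi\eps) + \log\!\Big(\sum_{j=1}^n \beta^j\Big) + o(1),
\]
and substitute this into \eqref{eq:dkl-asym3}. The $\tfrac{d}{2}\log(2\pi\eps)$ piece produced by Laplace's method cancels against the $\eps$-dependent portion of the differential entropy of each component Gaussian, encoded by the identity $\int \rho^i_\eps(x) \log \rho^i_\eps(x)\,dx = -\tfrac{1}{2}\log((2\pi\eps)^d \det\bSig^i_\eps) - \tfrac{d}{2}$ already used in the proof of Lemma~\ref{lem:dkl-asym3}, leaving exactly $-\tfrac{1}{2}\log\det\bSig^i_\eps - \tfrac{d}{2}$ per mode. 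Using $\sum_{i=1}^n \alpha^i_\eps = 1$ I then distribute the constants $-\tfrac{d}{2}$ and $\log\sum_j\beta^j$ back into the sum over $i$, producing exactly the grouping on the right-hand side of \eqref{eq:dkl-asym4}. The aggregate remainder is $r_\eps + o(1) = o(1)$.

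There is no real obstacle here, as the substantive work has been done in Lemmas~\ref{lem:normconst}, \ref{lem:compactness2} and \ref{lem:dkl-asym3}. The only step requiring some care is the bookkeeping of the $\eps$-dependent logarithmic contributions: the per-mode $-\tfrac{d}{2}\log(2\pi\eps)$ from the entropy of $\rho^i_\eps$ must be confirmed to cancel with the $+\tfrac{d}{2}\log(2\pi\eps)$ from Laplace, so that the final expression is free of $\log\eps$ factors and the claimed $o(1)$ accuracy is genuinely achieved.
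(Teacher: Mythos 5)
Your proposal is correct and follows essentially the same route as the paper, which likewise obtains the corollary by feeding the compactness bounds of \cref{lem:compactness2} into the expansion of \cref{lem:dkl-asym3} and then substituting the logarithm of the Laplace asymptotics for $Z_{\mu,\eps}$ from \cref{lem:normconst}. Your explicit check that the $\tfrac{d}{2}\log(2\pi\eps)$ term from $\log Z_{\mu,\eps}$ cancels the $\eps$-dependent part of the component entropies (using $\sum_i\alpha^i_\eps=1$) is exactly the ``rearranging the terms'' the paper alludes to.
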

\begin{remark}\label{rem:4-9}
Similarly to the discussion in \cref{rem:3-9}, the residual in 
\cref{eq:dkl-asym4} is here demonstrated to be of order $o(1)$, 
but the quantitative bound that $|r_\eps| \leq C\eps$ in \cref{eq:dkl-asym3} 
can be used to extract a rate of convergence. This can be used
to study the limiting behaviour of posterior measures arising from 
Bayesian inverse problems when multiple modes are present; see the
next section.
\end{remark}

\section{Applications in Bayesian inverse problems}\label{sec:app}

 Consider the inverse problem of recovering $x\in \R^d$ from the noisy data $y\in \R^d$, where $y$ and $x$ are linked through the equation
\be
y = G(x) + \eta. 
\en 
Here $G$ is called the forward operator which maps from $\R^d$ into itself, $\eta \in \R^d$ represents the observational noise. 
We take a Bayesian approach to solving the inverse problem. The main idea is 
to first model our knowledge about $x$ with a prior probability 
distribution, leading to a joint distribution on $(x,y)$ once the probabilistic
structure on $\eta$ is defined. We then update the prior based on the observed 
data $y$; specifically we obtain the posterior distribution $\mu^y$ 
which is the conditional distribution of $x$ given $y$,
and is the solution to the Bayesian inverse problem. From this posterior measure
one can extract information about the unknown quantity of interest. We remark that since $G$ is non-linear in general, the posterior is generally not Gaussian even when the noise and prior are both assumed to be Gaussian. A systematic treatment of the Bayesian approach to inverse problems may be found in \cite{S10a}. 

In Bayesian statistics there is considerable interest in the study of 
the asymptotic performance of posterior measures from a frequentist perspective;
this is often formalized as the {\em posterior consistency}. To 
define this precisely, consider a sequence of observations $\{y_j\}_{j\in \N}$, generated from the truth $x^\dagger$ via
 \be\label{eq:yn}
y_j = G(x^\dagger) + \eta_j,
 \en
where $\{\eta_j\}_{j\in \N}$ is a sequence of random noises. This may model a statistical experiment with increasing amounts of data or with vanishing noise. In either case, posterior consistency refers to concentration of the posterior distribution around the truth as the data quality increases. For parametric statistical models, Doob's consistency theorem \cite[Theorem 10.10]{V00} guarantees posterior consistency under the identifiability assumption about the forward model. For nonparametric models, in which the parameters of interest lie in infinite dimensional spaces, the corresponding posterior consistency is a much more challenging problem. Schwartz's theorem \cite{S65,BSW99} provides one of the main theoretical tools to prove posterior consistency in infinite dimensional space, which replaces identifiability by a stronger assumption on testability. The posterior contraction rate, quantifying the speed that the posterior contracts to the truth, has been determined in various Bayesian statistical models (see \cite{GGV00,SW01,CN13}). In the context of the Bayesian inverse problem, the posterior consistency problem 
has mostly been studied to date for linear inverse problems with Gaussian priors \cite{KVZ11,ALS13}. The recent paper \cite{V13} studied posterior consistency for a 
specific nonlinear Bayesian inverse problem, using the stability estimate of the 
underlying inverse problem together with posterior consistency results for the Bayesian regression problem. 

In this section, our main interest is not in the consistency of posterior distribution, but in characterizing in detail its asymptotic behavior. We will consider two limit processes in \cref{eq:yn}: the small noise limit and the large data limit. In the former case, we assume that the noise $\eta_i = \frac{1}{\sqrt{i}}\eta$ where $\eta$ is distributed according to the standard normal $N(0, \bI_d)$, and we consider the data $\by_N$ given by the most accurate observation, i.e. $\by_N = y_N$. In the later case, the sequence $\{\eta_i\}_{i\in \N}$ is assumed to be independent identically distributed according to the standard normal and we accumulate
the observations so that the data $\by_N = \{y_1, y_2, \cdots, y_N\}$. In addition, 
 assume that the prior distribution is $\mu_0$ which has the density 
$$
\mu_0(dx) = \frac{1}{Z_0} e^{-V_0(x)} dx
$$
with the normalization constant $Z_0 > 0$. Since the data and the posterior are fully determined by the noise $\bdeta$ with $\bdeta = \eta$ or $\bdeta = \{\eta_i\}_{i\in \N}$, we denote the posterior by $\mu^{\bdeta}_N$ to indicate the dependence. By using Bayes's formula, we calculate the posterior distribution for both limiting cases below.

\begin{itemize}
 
\item Small noise limit

\be\bea\label{eq:postdist1}
\mu^{\bdeta}_N(dx) & = \frac{1}{Z_{N,1}^{\bdeta}} \exp\left(-\frac{N}{2}\left|y_n - G(x)\right|^2\right)\mu_0(dx)\\
& = \frac{1}{Z_{N,1}^{\bdeta}} \exp\left(-\frac{N}{2}\left|G(x^\dagger) - G(x) + \frac{1}{\sqrt{N}}\eta\right|^2\right)\mu_0(dx).
\ena
\en

\item Large data limit

\be\bea\label{eq:postdist2}
\mu^{\bdeta}_N(dx) & = \frac{1}{Z_{N,2}^{\bdeta}} \exp\left(-\frac{1}{2}\sum_{i=1}^N|y_i - G(x)|^2\right)\mu_0(dx)\\
& = \frac{1}{Z_{N,2}^{\bdeta}} \exp\left(-\frac{1}{2}\sum_{i=1}^N\left|G(x^\dagger) - G(x) + \eta_i\right|^2\right)\mu_0(dx).
\ena
\en

\end{itemize}

In both cases, we are interested in the limiting behavior of the posterior distribution $\mu^{\bdeta}_N$ as $N \gt \infty$. For doing so, we assume the forward operator $G$ satisfies one of the following assumptions. 

 \begin{assumption}\label{assum-bip}

(i) $G\in C^4(\R^d;\R^d)$ and $G(x) = G(x^\dagger)$ implies $x = x^\dagger$. Moreover, $G$ is a $C^1$-diffeomorphism in the neighborhood of $x^\dagger$.

(ii) $G\in C^4(\R^d;\R^d)$ and the zero set of the equation $G(x) = G(x^\dagger)$ is $\{x^\dagger_i\}_{i=1}^n$. Moreover
$x^\dagger_1 = x^\dagger$ and $G$ is a $C^1$-diffeomorphism  in the neighborhood of $x_i^\dagger$.

\end{assumption}

The following model problem gives a concrete example where these  assumptions are satsified. 

\subsection*{Model Problem}
Consider the following one dimensional elliptic problem
\be\label{eq:elliptic}
\begin{aligned}
& - u^{\prime\prime}(x) + \exp(q(x)) u(x)  = f(x), \quad x\in (0,1),\\
& u(0) = u(1) = 0.
\end{aligned}
\en
Here we assume that $q,f \in L^\infty(0,1)$ and that $f$ is positive on $(0,1)$. The inverse problem of interest is to find $q$ from the knowledge of the solution $u$. We restrict ourselves to a finite dimensional version of  \eqref{eq:elliptic}, which comes from the finite difference discretization
\be\label{eq:delliptic}
\begin{aligned}
& -\frac{u_{k+1} - 2u_{k} + u_{k-1}}{h^2} + e^{q_k} u_k = f_k, \quad k = 1,2,\cdots, M, \\
& u_0 = u_{M+1} = 0.
\end{aligned}
\en
Here $u_k$, $f_k$ and $q_k$ are approximations to $u(x_k)$,  $f(x_k)$ and $q(x_k)$) with $x_k = k/M, k = 1, \cdots, M$ and $h = 1/(M+1)$. The corresponding finite dimensional inverse problem becomes finding the vector $\mathbf{q} = \{q_k\}_{k=1}^M$ from the solution vector $\mathbf{u} = \{u_k\}_{k=1}^M$ given the right side $\mathbf{f} = \{f_k\}_{k=1}^M$. For ease of notation, let us denote by $\mathcal{A}$ the matrix representation  of the one dimensional discrete Laplacian, i.e. $\mathcal{A}_{ii} = 2/h^2$ for $i = 1, 2,\cdots, M$ and $\mathcal{A}_{ij} = -1/h^2$ when $|i-j|=1$. Let $\mathcal{Q}$ be the diagonal matrix with $\mathcal{Q}_{ii} = e^{q_i}, i=1,2,\cdots, M$.  With these notations, we can write the forward map $G$ as
$$
G: \mathbf{q} \in \R^M \gt \mathbf{u} \in \R^M \quad\quad  \mathbf{u} = G(\mathbf{q}) = (\mathcal{A} + \mathcal{Q})^{-1} \mathbf{f}.
$$
Note that both $\mathcal{A}$ and $\mathcal{Q}$ are positive definite so that $(\mathcal{A} + \mathcal{Q})$ is invertible.
We now discuss this forward map, and variants on it, in relation to
Assumption \ref{assum-bip}.

First consider Assumption \ref{assum-bip} (i). First, $G$ is smooth in $\mathbf{q}$ since $\mathcal{Q}$ depends smoothly on $\mathbf{q}$. In particular, for any fixed $\mathbf{q}\in \R^M$ with  corresponding solution vector $\mathbf{u}$, a direct calculation shows that the derivative matrix $D_\mathbf{q} G$ of the forward map $G$ is given by
$$
  D_\mathbf{q} G = (\mathcal{A} + \mathcal{Q})^{-1} \mathcal{U}\mathcal{Q}.
$$
Here $\mathcal{U}$ is a diagonal matrix with the diagonal vector $\mathbf{u}$. Due to our assumption that $f_k$ are positive, it follows from the (discrete) maximum principle that the $u_k$ are also positive, which in turn implies that $\mathcal{U}$ is invertible. Consequently, the matrix $D_\mathbf{q} G$ is invertible and 
$$
D_\mathbf{q} G^{-1} = \mathcal{Q}^{-1} \mathcal{U}^{-1} (\mathcal{A} + \mathcal{Q}).
$$
According to the inverse function theorem, the map $G:\R^M \gt \R^M$ is invertible at every $\mathbf{q}\in \R^M$ and its inverse $G^{-1}(\mathbf{u})$ is smooth in $\mathbf{u}$. Therefore Assumption \ref{assum-bip} (i) is fulfilled for any $x^\dagger = \mathbf{q}^\dagger \in \R^M$. The problem \eqref{eq:elliptic} can be modified slightly so that Assumption \ref{assum-bip} (ii) is satisfied. In fact,
consider the problem \eqref{eq:elliptic} with the coefficient $\exp(q)$ replaced by $q^2$. Then Assumption \ref{assum-bip} (ii) is satisfied for any $x^\dagger = \mathbf{q}^\dagger$ without zero entries. More specifically, the resulting forward map in this case is still smooth, but the equation $G(\mathbf{q}) = G(\mathbf{q}^\dagger)$ has $n = 2^M$ solutions $\{\mathbf{q}_i^\dagger\}_{i=1}^{2^M}$ corresponding to the fact that $\mathbf{q}$ is only determined up to a sign in each entry. Moreover, if $\mathbf{q}^\dagger$ has no zero entry, $G^{-1}$ is smooth near each of $\mathbf{q}_i^\dagger$.

We divide our exposition below according to whether the noise is fixed or is considered as a random variable.
For a fixed realization of noise $\bdeta = \eta$, by applying the theory developed in the previous section, we show the asymptotic normality for $\mu^{\bdeta}_N$ in the small noise limit. Furthermore, we obtain a Bernstein-Von Mises type theorem for $\mu^{\bdeta}_N$ with respect to both limit processes, small noise and large data.

\subsection{Asymptotic Normality}

In this subsection, we assume that the data is generated from the truth $x^\dagger$ and a single realization of the Gaussian noise $\eta^\dagger$, i.e.
$$
y = G(x^\dagger) + \frac{1}{\sqrt{N}} \eta^\dagger.
$$
Then the resulting posterior distribution $\mu^{\bdeta}_N$ has a density of the form
\be\label{eq:muepsBIP}
\bea
\mu^{\bdeta}_N(dx) & = \frac{1}{Z_{N}^{\bdeta}} \exp\left(-\frac{N}{2} |y - G(x)|^2 - V_0(x) \right) dx \\
& =  \frac{1}{Z_{N}^{\bdeta}} \exp\left(-\frac{N}{2} |G(x^\dagger)- G(x) + \frac{1}{\sqrt{N}} \eta^\dagger|^2 - V_0(x) \right) dx
\ena
\en
where $Z_{N}^{\bdeta}$ is the normalization constant. Notice that $\mu^{\bdeta}_N$ has the same form as the measure defined in \cref{eq:mu_eps} with $\eps = \frac{1}{N}, V^\eps_1 (x) = V_1^N(x) := \frac{1}{2}|G(x^\dagger) - G(x) + \frac{1}{\sqrt{N}} \eta^\dagger|^2 $ and $V_2(x) = V_0(x)$.


Suppose that $V_0\in C^2(\R^d; \R)$ and that $G$ satisfies one of the assumptions in Assumption \eqref{assum-bip}. Then the potentials $V^\eps_1$ and $V_2$ satisfy \cref{assump}. In particular, we have $V^\eps_1(x) \gt V_1(x) := \frac{1}{2}|G(x^\dagger) - G(x) |^2$ for any $x\in \R^d$ and that $D^2 V_1(x^\dagger_i) = DG(x^\dagger_i)^T DG(x^\dagger_i)$.
Recall the set of Gaussian measures $\A$  and the set of Gaussian mixtures $\M_n$ and $\M_n^{\bxi}$ (defined in \cref{eq:Gaussmix} and \cref{eq:Gaussmix-delta}). Again, we set $\bxi = (\xi_1, \xi_2)$ such that $\xi_1\in (0,1)$ and $\min_{i \neq j} |x^i - x^j| \geq \xi_2 > 0$. 
The following theorem concerning the asymptotic normality of $\mu_N^{\bdeta}$ is a direct consequence of \cref{cor:covmin} and \cref{cor:convmin}. 

\begin{theorem} \label{thm:bip}
\item [(i)] Let $V_0 \in C^2(\R^d; \R)$ and $G$ satisfy \cref{assum-bip} (i). Given any $N\in \N$, let $\nu_N = N(m_N, \frac{1}{N}\bSig_N) \in \A$ be a minimizer of the functional $\nu \mapsto D_{\text{KL}}(\nu || \mu_{N}^{\bdeta})$ within $\A$. Then $D_{\text{KL}}(\nu_N || \mu_N^{\bdeta}) \lgt 0$ as $N\gt \infty$. Moreover, $m_N \gt x^\dagger$ and $\bSig_N \gt \left(DG(x^\dagger)^T DG(x^\dagger)\right)^{-1}$.

\item [(ii)] Let $V_0 \in C^2(\R^d; \R)$ and $G$ satisfy \cref{assum-bip} (ii). Given any $N\in \N$, let $\nu_N \in \M_n^{\bxi}$ be a minimizer of the functional $\nu \mapsto D_{\text{KL}}(\nu || \mu_{N}^{\bdeta})$ within $\M_n^{\bxi}$. Let $\nu_N = \sum_{i=1}^n \alpha^i_N \nu^i_N$ with $\nu^i_N = N(m^i_N, \frac{1}{N} \bSig^i_N)$. Then it holds that as $N\gt \infty$
$$
m^i_N \gt x^\dagger_i, \bSig^i_N \gt \left(DG(x^\dagger_i)^T DG(x^\dagger_i)\right)^{-1}\text{ and } \alpha^i_N \gt \frac{\left[\det D G(x^\dagger_i)\right]^{-1} \cdot e^{-V_0(x^\dagger_i)}}{\sum_{j=1}^n 
\left[\det D G(x^\dagger_j)\right]^{-1} \cdot e^{-V_0(x^\dagger_j)}
}.$$
\end{theorem}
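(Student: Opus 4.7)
The plan is to recognize $\mu_N^{\bdeta}$ as an instance of the measure family $\mu_\eps$ studied in Sections~3 and 4, and then simply invoke \cref{cor:covmin} and \cref{cor:convmin}. Concretely, I would set $\eps = 1/N$, $V_1^\eps(x) = V_1^N(x) = \tfrac12 |G(x^\dagger) - G(x) + \frac{1}{\sqrt N}\eta^\dagger|^2$ and $V_2 = V_0$, so that \eqref{eq:muepsBIP} has exactly the form \eqref{eq:mu_eps}. The pointwise limit is $V_1(x) = \tfrac12|G(x^\dagger) - G(x)|^2$.

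The first main task is verifying \cref{assump} (A-1)--(A-5) in each of the two cases. For (A-1), the $C^4$/$C^2$ regularity and growth bounds follow from the corresponding regularity of $G$ and $V_0$. For (A-3), under \cref{assum-bip}(i) the set $\EE$ reduces to $\{x^\dagger\}$, while under \cref{assum-bip}(ii) one gets $\EE = \{x^\dagger_1, \ldots, x^\dagger_n\}$; in both cases a direct computation gives
\begin{equation*}
D^2 V_1(x^\dagger_i) = DG(x^\dagger_i)^T DG(x^\dagger_i),
\end{equation*}
which is positive definite since the local diffeomorphism assumption makes $DG(x^\dagger_i)$ invertible. Condition (A-2) and (A-4), i.e.\ that $V_1^N$ has exactly $n$ minimizers $x^i_N$ with $V_1^N(x^i_N) = 0$ and $x^i_N \to x^\dagger_i$, is the step that requires a little care: one must use the local diffeomorphism property of $G$ near each $x^\dagger_i$ to solve $G(x) = G(x^\dagger) - \frac{1}{\sqrt N}\eta^\dagger$ uniquely near $x^\dagger_i$ via the inverse function theorem, producing $x^i_N$ with $x^i_N \to x^\dagger_i$ at rate $\O(1/\sqrt N)$. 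Condition (A-5) follows from the quadratic nature of $V_1$ together with any mild coercivity of $G$; if $G$ is not globally coercive one would need to include such an assumption, but under \cref{assum-bip} together with reasonable growth of $G$ this is routine. This verification step is the main obstacle, since it hides all the case-specific work behind the otherwise abstract machinery.

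With \cref{assump} in place, part (i) is immediate from \cref{cor:covmin}: the single-Gaussian minimizer $\nu_N = N(m_N, \tfrac1N \bSig_N)$ satisfies $(m_N, \bSig_N) \to (m,\bSig)$ along a subsequence, where $(m,\bSig)$ minimizes $F$ from \eqref{eq:F}. Since $n=1$, $F$ has the unique minimizer $m = x^\dagger$, $\bSig = (D^2V_1(x^\dagger))^{-1} = (DG(x^\dagger)^T DG(x^\dagger))^{-1}$, so the full sequence converges. The convergence $D_{\mathrm{KL}}(\nu_N\|\mu_N^{\bdeta}) \to 0$ follows from \cref{cor:sgaussian} applied with $n=1$, since $F_0$ vanishes at this minimizer.

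Part (ii) proceeds analogously via \cref{cor:convmin}: the mixture minimizer has $(\balpha_N, \bm_N, \bSig_N) \to (\overline\balpha, \overline\bm, \overline\bSig)$ where this triple is the (unique) minimizer of $G$ from \eqref{eq:limit-G}. By \eqref{eq:minimizermix} one reads off $\overline m^i = x^\dagger_i$, $\overline \bSig^i = (D^2 V_1(x^\dagger_i))^{-1} = (DG(x^\dagger_i)^T DG(x^\dagger_i))^{-1}$, and $\overline \alpha^i = \beta^i / \sum_j \beta^j$. Using $\det D^2 V_1(x^\dagger_i) = (\det DG(x^\dagger_i))^2$, one has $\beta^i = |\det DG(x^\dagger_i)|^{-1} e^{-V_0(x^\dagger_i)}$, which gives exactly the claimed expression for $\lim \alpha^i_N$. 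The convergence of the full sequence (not just a subsequence) follows from uniqueness of the limit in each case.
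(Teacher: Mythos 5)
Your proposal is correct and is essentially the paper's own argument: the paper likewise identifies $\mu_N^{\bdeta}$ with $\mu_\eps$ via $\eps = 1/N$, $V_1^\eps = V_1^N$, $V_2 = V_0$, asserts that \cref{assump} holds under \cref{assum-bip} with $D^2V_1(x_i^\dagger) = DG(x_i^\dagger)^TDG(x_i^\dagger)$, and then declares the theorem a direct consequence of \cref{cor:covmin} and \cref{cor:convmin}; your verification of (A-2)--(A-5) via the inverse function theorem is actually more explicit than what the paper records. The only slip is a sign: the minimizers of $V_1^N$ solve $G(x) = G(x^\dagger) + \frac{1}{\sqrt{N}}\eta^\dagger$ rather than $G(x) = G(x^\dagger) - \frac{1}{\sqrt{N}}\eta^\dagger$, which does not affect the argument.
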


\cref{thm:bip} (i) states that the measure $\mu_N^{\bdeta}$ is asymptotically Gaussian when certain uniqueness and stability properties hold in the inverse problem. 
Moreover, in this case, the asymptotic Gaussian distribution is fully determined by the truth and the forward map, and is independent of the prior. 
 In the case where the uniqueness fails, but the data only corresponds to a finite number of unknowns, \cref{thm:bip} (ii) demonstrates that the measure $\mu_N^{\bdeta}$ is asymptotically a Gaussian mixture, with each Gaussian mode independent of the prior. However, prior beliefs affect the proportions of the individual Gaussian components within the mixture; more precisely, the un-normalized weights of each Gaussian mode are
proportional to the values of the prior evaluated at the corresponding unknowns. 
 \begin{remark}
In general, when $\{\eta_i\}_{i\in \N}$ is a sequence of fixed realizations of the normal distribution, \cref{thm:bip} does not hold for the measure $\mu_N^{\bdeta}$ defined in \cref{eq:postdist2} in the large data case. However, we will show that $D_{\text{KL}}(\nu_N || \mu^{\bdeta}_N)$ will converge to zero in some average sense; see \cref{thm:expdkl}.
 \end{remark}

\subsection{A Bernstein-Von Mises type result}


The asymptotic Gaussian phenomenon in \cref{thm:bip} is very much in
the same spirit as the celebrated Bernstein-Von Mises (BvM) theorem \cite{V00}. 
This theorem asserts that for a certain class of regular priors, the posterior distribution converges to a Gaussian distribution, independently of the prior, as the sample size tends to infinity. Let us state the Bernstein-Von Mises theorem more precisely in the i.i.d case. Consider observing a set of i.i.d samples $\bX^N := \{X^1,X^2,\cdots, X^N\}$, where $X^i$ is drawn from distribution $P_{\theta}$, indexed by an unknown parameter $\theta\in \Theta$. Let $P^N_\theta$ be the law of $\bX^N$. Let $\Pi$ be the prior distribution on $\theta$ and denote by $\Pi(\cdot | \bX^N)$ the resulting posterior distribution. The Bernstein-Von Mises Theorem is concerned with the behavior of the posterior $\Pi(\cdot | \bX^N)$ under the frequentist assumption that $X^i$ is drawn from some true model $P_{\theta_0}$. A standard finite-dimensional BvM result (see e.g. \cite[Theorem 10.1]{V00}) states that, under certain conditions on the prior $\Pi$ and the model $P_{\theta}$, as $N \gt \infty$
\be\label{eq:bvm1}
d_{TV}\left(\Pi(\theta| \bX^N), N\left(\hat{\theta}_N, \frac{1}{N} I^{-1}_{\theta_0}\right)\right) \xrightarrow{P_{\theta_0}^N} 0
\en
where $\hat{\theta}_N$ is an efficient estimator for $\theta$, $I_{\theta}$ is the Fisher information matrix of $P_\theta$ and $d_{TV}$ represents the total variation distance. As an important consequence of the BvM result, Bayesian credible sets are asymptotically equivalent to frequentist confidence intervals. Moreover, it has been proved that the optimal rate of convergence in the Bernstein-Von Mises theorem is $O(1/\sqrt{N})$; see, for instance, \cite{LeCam73, HM76}. This means that for any $\delta > 0$, there exists $M = M(\delta) > 0$ such that
\be\label{eq:bvm2}
P_{\theta_0}^N \left(\bX^N : d_{TV}\left(\Pi(\theta| \bX^N), N\left(\hat{\theta}_N, \frac{1}{N} I^{-1}_{\theta_0}\right)\right) \geq M\frac{1}{\sqrt{N}}\right) \leq \delta
\en

Unfortunately, BvM results like \cref{eq:bvm1} and \cref{eq:bvm2} do not fully generalize to infinite dimensional spaces, see counterexamples in \cite{F99}. Regarding the asymptotic frequentist properties of posterior distributions in nonparametric models, various positive results have been obtained recently, see e.g. \cite{GGV00,SW01,KVZ11,L11,CN13,CN14}. For the convergence rate in the nonparametric case, we refer to \cite{GGV00,SW01,CN13}. 

In the remainder of the section, we prove a Bernstein-Von Mises type result for the posterior distribution $\mu^{\bdeta}_N$ defined by \cref{eq:postdist1} and \cref{eq:postdist2}.
If we view the observational noise $\eta$ and $\eta_i$ appearing in the data
as random variables, 
then the posterior measures appearing become random probability measures. Furthermore, 
exploiting the randomness of the $\eta_i$, we claim that the posterior distribution 
in the large date case can be rewritten in the form of the small noise case. Indeed, 
by completing the square, 
we can write the expression \cref{eq:postdist2} as
\be
\mu_N^{\bdeta}(dx) 
= \frac{1}{\overline{Z}_{N,2}^{\bdeta}}\exp\left(-\frac{N}{2}|G(x^\dagger) - G(x) + \frac{1}{N}\sum_{i=1}^N \eta_i|^2\right) dx
\en
Observe that $\mathcal{L}\left(\frac{1}{N} \sum_{i=1}^N \eta_i\right) = \mathcal{L}(\frac{1}{\sqrt{N}}\eta) = N(0, \frac{1}{N}\bI_d)$ due to the normality assumptions on $\eta$ and $\eta_i$. As a consequence it makes no difference which formulation is chosen
when one is concerned with the statistical dependence of $\mu^{\bdeta}_N$ on the law of 
$\bdeta$. For this reason, we will only prove the Bernstein-Von Mises result for $\mu^{\bdeta}_N$ given directly in the form \cref{eq:postdist1}.

For notational simplicity, we write the noise level $\sqrt{\eps}$ in place of $\frac{1}{\sqrt{N}}$ and consider random observations $\{y_\eps\}$, generated from a truth $x^\dagger$ and normal noise $\eta$, i.e.
$$
y_\eps = G(x^\dagger) + \sqrt{\eps} \eta.
$$
Given the same prior defined as before, we obtain the posterior distribution
$$
\bea
\mu^\eta_\eps (dx) & = \frac{1}{Z_{\mu, \eps}^{\eta}} \exp\left(-\frac{1}{2\eps} |y_\eps - G(x)|^2 - V_0(x) \right) dx \\
& =  \frac{1}{Z_{\mu,\eps}^{\eta}} \exp\left(-\frac{1}{2\eps} |G(x^\dagger)- G(x) + \sqrt{\eps} \eta|^2 - V_0(x) \right) dx.
\ena
$$
 For any fixed $\eta$, let $\nu_\eps^\eta$ be the best Gaussian measure which minimizes the Kullback-Leibler divergence $D_{\text{KL}}(\nu || \mu_\eps^\eta)$ over $\A$. For ease of calculations, from now on we only consider the rate of convergence under \cref{assum-bip} (i); the other case can be dealt with in the same manner, see \cref{rem:expdkl}. 
The main result is as follows.

\begin{theorem}\label{thm:expdkl}
There exists $C > 0$ such that 
\be\label{eq:expdkl}
\E^{\eta} D_{\text{KL}}(\nu_\eps^\eta || \mu_\eps^\eta) \leq C\eps
\en
as $\eps \lgt 0$.
\end{theorem}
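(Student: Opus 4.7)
The plan is to exploit the minimality of $\nu_\eps^\eta$: for any data-dependent test Gaussian $\tilde{\nu}_\eps^\eta\in\A$ one has $D_{\text{KL}}(\nu_\eps^\eta || \mu_\eps^\eta)\leq D_{\text{KL}}(\tilde{\nu}_\eps^\eta || \mu_\eps^\eta)$, so it suffices to exhibit a $\tilde{\nu}_\eps^\eta$ whose KL divergence to $\mu_\eps^\eta$ is bounded by $C(\eta)\eps$ pointwise, with $\E^\eta C(\eta)<\infty$.

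First I would choose $\tilde{\nu}_\eps^\eta=N(x_\eps^\eta,\eps H_\eps^{-1})$, where $x_\eps^\eta$ is the unique solution of $G(x)=G(x^\dagger)+\sqrt{\eps}\eta$ near $x^\dagger$ (obtained via the local inverse of $G$ from \cref{assum-bip}(i), so that $V_1^\eps(x_\eps^\eta)=0$) and $H_\eps := DG(x_\eps^\eta)^T DG(x_\eps^\eta)=D^2V_1^\eps(x_\eps^\eta)$. Applying \cref{lem:dkl-asym2} and using $\Tr(H_\eps\cdot H_\eps^{-1})=d$ yields
$$D_{\text{KL}}(\tilde{\nu}_\eps^\eta || \mu_\eps^\eta) = V_0(x_\eps^\eta) + \tfrac{1}{2}\log\det H_\eps - \tfrac{d}{2}\log(2\pi\eps) + \log Z_{\mu,\eps}^\eta + r_\eps^{(1)},$$
with $|r_\eps^{(1)}|\leq C_1(\eta)\eps$ coming from the explicit bound on the Taylor remainder in \cref{lem:dkl-asym2}. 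The emphasis on the quantitative rate $O(\eps)$ in \cref{lem:dkl-asym2} (rather than the looser $o(1)$ of \cref{cor:dkl-asym4}) is precisely what makes this step work, as noted in \cref{rem:3-9}.

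The core step is a quantitative Laplace expansion of $\log Z_{\mu,\eps}^\eta$ about $x_\eps^\eta$. Substituting $x=x_\eps^\eta+\sqrt{\eps}z$ in the defining integral turns the integrand into $\exp(-\tfrac{1}{2}z^T H_\eps z + P_\eps^\eta(z))$, where $P_\eps^\eta$ collects the Taylor corrections of $G$ and $V_0$ beyond the quadratic order retained in $H_\eps$. Crucially, the $O(\sqrt{\eps})$-part of $P_\eps^\eta$ is the sum of a cubic term in $z$ (from $G$) and a linear term (from $V_0$), both of which integrate to zero against $N(0,H_\eps^{-1})$ by odd parity; hence a cumulant expansion of $\log\E^{N(0,H_\eps^{-1})}[\exp P_\eps^\eta]$ yields
$$\log Z_{\mu,\eps}^\eta = \tfrac{d}{2}\log(2\pi\eps) - V_0(x_\eps^\eta) - \tfrac{1}{2}\log\det H_\eps + r_\eps^{(2)},$$
with $|r_\eps^{(2)}|\leq C_2(\eta)\eps$. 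Substituting back, all deterministic terms cancel and one is left with $D_{\text{KL}}(\tilde{\nu}_\eps^\eta || \mu_\eps^\eta)\leq (C_1(\eta)+C_2(\eta))\eps$; taking $\E^\eta$ and invoking the finiteness of all polynomial moments of $\eta\sim N(0,\bI_d)$ then finishes the proof.

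The main obstacle I anticipate is ensuring that $C_1(\eta)$ and $C_2(\eta)$ grow only polynomially in $|\eta|$, so their Gaussian expectations are finite. A naive use of the exponential bound $|\partial^\alpha V_1^\eps|\leq M_V e^{|x|^2}$ from Assumption (A-1), combined with $|x_\eps^\eta|\lesssim |x^\dagger|+\sqrt{\eps}|\eta|$, would introduce factors like $e^{c|x_\eps^\eta|^2}\lesssim e^{c'|\eta|^2}$ that are not integrable against $N(0,\bI_d)$. To avoid this, the Taylor remainders should be tracked using the explicit polynomial structure of $V_1^\eps=\tfrac{1}{2}|G(\cdot)-G(x^\dagger)-\sqrt{\eps}\eta|^2$ and of the fixed smooth $V_0$; alternatively, a truncation argument splitting $\E^\eta$ into $\{|\eta|\leq R_\eps\}$ and its complement, with the tail handled by Gaussian concentration together with a cruder test measure, may be layered on top.
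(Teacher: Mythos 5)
Your proposal is correct and follows essentially the same route as the paper: the paper likewise bounds $D_{\text{KL}}(\nu_\eps^\eta||\mu_\eps^\eta)$ by the divergence of the test Gaussian $N(m^\dagger_{\eps,\eta},\eps\,(DG(m^\dagger_{\eps,\eta})^TDG(m^\dagger_{\eps,\eta}))^{-1})$ centered at the solution of $G(x)=G(x^\dagger)+\sqrt{\eps}\eta$, applies the quantitative $O(\eps)$ remainder of \cref{lem:dkl-asym2}, performs a Laplace expansion of $\log Z^\eta_{\mu,\eps}$ in which the order-$\sqrt{\eps}$ terms cancel by Gaussian parity, and controls the $\eta$-dependence of the constants exactly by the truncation $\{|\eta|\leq\eps^{-\gamma}\}$, $\gamma\in(0,\tfrac12)$, that you propose as your fallback. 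The only cosmetic difference is that you arrange the expansion about $x_\eps^\eta$ so the $\sqrt{\eps}$ terms cancel pointwise in $\eta$, whereas the paper expands down to $x^\dagger$ and kills the resulting $\sqrt{\eps}\,p^T\eta$ terms using $\E^\eta[\eta]=0$.
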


With the help of Pinsker's inequality \cref{ieq:pinsker} as well as the Markov inequality, one can derive the following BvM-type result from \cref{thm:expdkl}.
\begin{corollary}\label{cor:bvm}
For any $\delta > 0$, there exists a constant $M = M(\delta)  > 0$ such that
\be\label{eq:bvm3}
\P^\eta \left( \eta:  d_{\text{TV}}(\mu_\eps^\eta, \nu_\eps^\eta) \geq M\sqrt{\eps}\right) \leq \delta
\en
when $\eps \lgt 0$.
\end{corollary}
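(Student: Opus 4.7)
The plan is to combine the three ingredients mentioned just before the statement: Pinsker's inequality \eqref{ieq:pinsker}, the Markov inequality, and the moment bound from \cref{thm:expdkl}. This is a direct chain of deductions; no geometric or analytic difficulty is hidden in it.

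First I would apply Pinsker's inequality pointwise in $\eta$: for every realization of the noise,
\[
d_{\text{TV}}(\mu_\eps^\eta, \nu_\eps^\eta)^2 \leq \tfrac{1}{2} D_{\text{KL}}(\nu_\eps^\eta \,||\, \mu_\eps^\eta).
\]
Consequently, for any $M > 0$ the event $\{\eta : d_{\text{TV}}(\mu_\eps^\eta, \nu_\eps^\eta) \geq M\sqrt{\eps}\}$ is contained in the event $\{\eta : D_{\text{KL}}(\nu_\eps^\eta \,||\, \mu_\eps^\eta) \geq 2 M^2 \eps\}$.

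Next I would apply Markov's inequality to the (non-negative) random variable $D_{\text{KL}}(\nu_\eps^\eta \,||\, \mu_\eps^\eta)$ and invoke \cref{thm:expdkl} to control its $\eta$-expectation by $C\eps$:
\[
\P^\eta\bigl(D_{\text{KL}}(\nu_\eps^\eta \,||\, \mu_\eps^\eta) \geq 2 M^2 \eps\bigr) \leq \frac{\E^\eta D_{\text{KL}}(\nu_\eps^\eta \,||\, \mu_\eps^\eta)}{2M^2 \eps} \leq \frac{C}{2M^2}.
\]
The crucial point is that the upper bound on the right is independent of $\eps$; the factor of $\eps$ in the numerator of the expectation bound is exactly matched by the $\eps$ introduced by squaring the scaling $M\sqrt{\eps}$.

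Finally, given $\delta > 0$, I would simply choose $M = M(\delta) := \sqrt{C/(2\delta)}$, which forces $C/(2M^2) \leq \delta$ and yields \eqref{eq:bvm3}. There is no real obstacle: the only subtlety is the matching of scales between $\sqrt{\eps}$ in the total variation bound and the $\eps$-rate in \cref{thm:expdkl}, which is precisely why Pinsker's inequality (square root) is the right tool here.
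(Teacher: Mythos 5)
Your proposal is correct and follows exactly the route the paper intends: the paper states only that the corollary follows from \cref{thm:expdkl} via Pinsker's inequality and Markov's inequality, and your argument fills in precisely those straightforward steps, including the correct matching of the $\sqrt{\eps}$ scale with the $\eps$-rate of the expected KL-divergence.
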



\begin{remark}

\item[(i)] Because of the statistical equivalence of posterior measures in the limit of large data size and  small noise, the posterior measure $\mu_N^{\bdeta}$ in the large data case (given by \eqref{eq:postdist2}) has the same convergence rate as \eqref{eq:bvm3}, namely, for any $\delta > 0$, there exists a constant $M = M(\delta)  > 0$ such that
\be\label{eq:bvm4}
\P^{\bdeta} \left( \bdeta:  d_{\text{TV}}(\mu_N^{\bdeta}, \nu_N^{\bdeta}) \geq M/\sqrt{N}\right) \leq \delta
\en
as $N \gt \infty$. This recovers the optimal rate of convergence for the posterior as proved for statistical models, see \eqref{eq:bvm2}.

\item[(ii)] For a fixed realization of the noise $\eta$, we have shown in \cref{thm:bip} (i) that $D_{\text{KL}}(\nu_N || \mu^{\bdeta}_N)\lgt 0$ as $N \gt \infty$. In fact, by following the proof of the Laplace method, one can prove that $D_{\text{KL}}(\nu_N || \mu^{\bdeta}_N) = \mathcal{O}(1/ \sqrt{N})$. However, we obtain the linear convergence rate in \cref{eq:expdkl} (with $\eps$ replacing $1/N$)  by utilizing the symmetric cancellations in the evaluation of Gaussian integrals.
\end{remark}

To prove \cref{thm:expdkl}, we start with an averaging estimate for the logarithm of the normalization constant $Z^\eta_{\mu, \eps}$. 

\begin{lemma}\label{lem:expnorm}
\be\label{eq:explogZ}
\E^\eta \log Z_{\mu,\eps}^\eta \leq \frac{d}{2} \log (2\pi\eps) -V_0(x^\dagger) + \log \det DG(x^\dagger)+ r_\eps
\en
where $r_\eps \leq C\eps$ for some $C > 0$ independent of $\eps$. 
\end{lemma}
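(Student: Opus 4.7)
The plan is to bound $\E^{\eta}\log Z_{\mu,\eps}^{\eta}$ by $\log \E^{\eta}Z_{\mu,\eps}^{\eta}$ via Jensen's inequality, then compute $\E^{\eta}Z_{\mu,\eps}^{\eta}$ explicitly thanks to the Gaussianity of $\eta$, and finally apply a quantitative Laplace expansion to the resulting deterministic integral.

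First, since $\log$ is concave,
\[
\E^{\eta}\log Z_{\mu,\eps}^{\eta} \leq \log \E^{\eta} Z_{\mu,\eps}^{\eta}.
\]
Then by Fubini's theorem and a direct Gaussian computation in $\eta$ with $a := G(x^{\dagger})-G(x)$,
\[
\E^{\eta}\exp\!\Big(-\tfrac{1}{2\eps}|a+\sqrt{\eps}\eta|^{2}\Big)
= 2^{-d/2}\exp\!\Big(-\tfrac{|a|^{2}}{4\eps}\Big),
\]
which can be verified by completing the square against the standard Gaussian density. Consequently
\[
\E^{\eta} Z_{\mu,\eps}^{\eta}
= 2^{-d/2}\int_{\R^{d}} \exp\!\Big(-\tfrac{|G(x^{\dagger})-G(x)|^{2}}{4\eps} - V_{0}(x)\Big) dx.
\]

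Next I would apply Laplace's method to the right-hand side. Under \cref{assum-bip} (i), the function $x\mapsto |G(x^{\dagger})-G(x)|^{2}$ has $x^{\dagger}$ as its unique zero, with Hessian $2\,DG(x^{\dagger})^{T}DG(x^{\dagger})$. Using the local $C^{1}$-diffeomorphism property of $G$, I would localise near $x^{\dagger}$ (using the growth/boundedness of the remaining factor $e^{-V_{0}}$ together with \cref{assump} (A-5) applied to $V_{1}$ to kill the tails with exponentially small error) and change variables via $u = G(x)$, then rescale $v = (u - G(x^{\dagger}))/\sqrt{\eps}$. The integrand becomes
\[
\eps^{d/2}\, e^{-|v|^{2}/4}\, \phi_{\eps}(v), \qquad \phi_{\eps}(v) := \frac{e^{-V_{0}(G^{-1}(G(x^{\dagger})+\sqrt{\eps}v))}}{|\det DG(G^{-1}(G(x^{\dagger})+\sqrt{\eps}v))|}.
\]
Taylor expanding $\phi_{\eps}$ around $v=0$ to second order and observing that the $\sqrt{\eps}$-order term vanishes by oddness against the Gaussian weight, I obtain
\[
\E^{\eta}Z_{\mu,\eps}^{\eta}
= \frac{(2\pi\eps)^{d/2}\, e^{-V_{0}(x^{\dagger})}}{|\det DG(x^{\dagger})|}\bigl(1 + \mathcal{O}(\eps)\bigr).
\]
Taking logs and using $\log(1+C\eps)\leq C\eps$ yields the claimed estimate.

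The main obstacle is the quantitative Laplace expansion: one needs uniform (in $\eps$) control of the remainder in the Taylor expansion of $\phi_{\eps}$, integrability against the Gaussian weight $e^{-|v|^{2}/4}$ on the whole image of the localising neighbourhood, and an exponentially small bound on the contribution from the complement, where the global hypothesis $G(x)=G(x^{\dagger})\Rightarrow x=x^{\dagger}$ combined with the coercivity condition (A-5) on $V_{1}=|G(x^{\dagger})-G(\cdot)|^{2}/2$ and the smoothness assumptions on $G$ and $V_{0}$ suffice to produce an $e^{-c/\eps}$ error. The cancellation of the $\sqrt{\eps}$-order term by Gaussian symmetry is what gives the linear, rather than merely $\sqrt{\eps}$, rate in $r_{\eps}$.
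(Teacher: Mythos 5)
Your proof is correct, but it takes a genuinely different route from the paper's. The paper splits $\E^\eta\log Z^\eta_{\mu,\eps}$ according to whether $|\eta|\le\eps^{-\gamma}$, performs a Laplace expansion of $Z^\eta_{\mu,\eps}$ \emph{pointwise in $\eta$} on the good event (tracking the $\eta$-dependence of every remainder), obtains $\log Z^\eta_{\mu,\eps}=\tfrac d2\log(2\pi\eps)-V_0(x^\dagger)-\log\det DG(x^\dagger)+\sqrt{\eps}\,p^T\eta+O(\eps|\eta|^2)$, and only then averages, using the symmetry of the law of $\eta$ to kill the $\sqrt{\eps}$ term and the crude bound $Z^\eta_{\mu,\eps}\le Z_0$ on the tail event. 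You instead apply Jensen's inequality first, compute the $\eta$-average of $Z^\eta_{\mu,\eps}$ in closed form (your identity $\E^\eta\exp(-\tfrac1{2\eps}|a+\sqrt\eps\eta|^2)=2^{-d/2}e^{-|a|^2/(4\eps)}$ is correct, and the factor $2^{-d/2}$ is exactly absorbed by the halved Hessian $\tfrac12 DG(x^\dagger)^TDG(x^\dagger)$ of $\tfrac14|G(x^\dagger)-G(\cdot)|^2$), and then run a single deterministic Laplace expansion; the odd-order cancellation that produces the $O(\eps)$ rather than $O(\sqrt\eps)$ rate now happens inside the spatial Gaussian integral rather than in the average over $\eta$. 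This is shorter and avoids all uniform-in-$\eta$ bookkeeping, and the localisation and tail estimates you defer are exactly the standard ones: coercivity plus the unique zero of $|G(x^\dagger)-G(\cdot)|^2$ give an $e^{-c/\eps}$ tail, and $C^2$ regularity of $e^{-V_0\circ H}/|\det DG\circ H|$ gives the uniform second-order remainder. The price is that Jensen delivers only the upper bound --- which is all the lemma asserts and all that is needed in the proof of \cref{thm:expdkl} --- whereas the paper's pointwise expansion would also yield the matching lower bound on $\E^\eta\log Z^\eta_{\mu,\eps}$. One further point: your computation produces $-\log|\det DG(x^\dagger)|$ rather than the $+\log\det DG(x^\dagger)$ printed in the statement; this is evidently a sign typo in the lemma, since the paper's own proof likewise arrives at $\log\bigl((2\pi\eps)^{d/2}e^{-V_0(x^\dagger)}/\det DG(x^\dagger)\bigr)$ and the cancellation against $+\log\det DG(\overline m_{\eps,\eta})$ in \cref{eq:expdkl2} requires the minus sign, so your sign is the correct one.
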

\begin{proof}
Take a constant $\gamma \in (0, \frac{1}{2})$. We write $\E^\eta \log Z_{\mu,\eps}^\eta$ as the sum
$$
\E^\eta \log Z_{\mu,\eps}^\eta = \E^\eta \left(\log Z_{\mu,\eps}^\eta \mathbf{1}_{|\eta| \leq \eps^{-\gamma}}\right) +  \E^\eta \left(\log Z_{\mu,\eps}^\eta \mathbf{1}_{|\eta| \geq \eps^{-\gamma}}\right)=:  I_1 + I_2. 
$$
We first find an upper bound for $I_2$. By definition,
$$
\bea
Z_{\mu,\eps}^\eta &= \int_{\R^d}\exp\left(-\frac{1}{2\eps} |G(x^\dagger)- G(x) + \sqrt{\eps} \eta|^2 - V_0(x) \right) dx\\
& \leq \int_{\R^d} e^{-V_0(x)}dx  = Z_0.
\ena
$$
It follows that 
$$
I_2 \leq \log Z_0 \cdot P^\eta(\eta: |\eta| \geq \eps^{-\gamma}) \leq \log Z_0\cdot e^{-\eps^{-2\gamma}}.
$$
For $I_1$, we need to estimate $Z^\eta_{\mu, \eps}$ under the assumption that $|\eta|\leq \eps^{-\gamma}$. Thanks to the condition (i) on $G$, when $\eps \ll 1$ there exists a unique $m^\dagger_{\eps, \eta}$ such that $G(m^\dagger_{\eps, \eta}) = G(x^\dagger)  + \sqrt{\eps}\eta$. Moreover, denoting by $H$ the inverse of $G$ in the neighborhood of $G(x^\dagger)$, we get from Taylor expansion that
\be\label{eq:mdagger0}
m^\dagger_{\eps, \eta} = x^\dagger + DH(G(x^\dagger)) \sqrt{\eps}\eta + \eps \sum_{|\alpha|=2} \partial_{\alpha} H(\xi G(x^\dagger) + (1 - \xi)\sqrt{\eps}\eta) \eta^\alpha 
\en
with some $\xi \in (0,1)$. Thanks to the smoothness assumption on $G$, the function $H$ is differentiable up to the fourth order and hence the coefficients in the summation are uniformly bounded. Moreover, noting that $DH(G(x^\dagger)) = DG(x^\dagger)^{-1}$, we obtain
\be\label{eq:mdagger1}
m^\dagger_{\eps, \eta} = x^\dagger + DG(x^\dagger)^{-1} \sqrt{\eps}\eta + \eps R_\eps(\eta)
\en
where $\limsup_{\eps \lgt 0}|R_\eps(\eta)| \leq C|\eta|^2$ for some positive $C$ which is independent of $\eps$ and $\eta$. Next, according to the proof of \cref{lem:normconst}, given any sufficiently small $\delta > 0$, we can write $Z_{\mu, \eps}^\eta = I_{\eps}^{\delta,\eta} + J_{\eps}^{\delta,\eta}$ where $|J_{\eps}^{\delta,\eta}| \leq Ce^{-\frac{C}{\eps}}$ with some $C > 0$ independent of $\eta$ and 
$$
I_{\eps}^{\delta,\eta} = \int_{B^{\delta,\eta}_{\eps}}\exp\left(-\frac{1}{2\eps} |G(x^\dagger)- G(x) + \sqrt{\eps} \eta|^2 - V_0(x) \right) dx
$$
with $B^{\delta,\eta}_{\eps} := B(m^\dagger_{\eps, \eta}, \delta)$. Now we seek bounds for $I_{\eps}^{\delta,\eta}$. Thanks to \cref{assum-bip} (i) and the fact that $m^\dagger_{\eps,\eta} \gt x^\dagger$, $G$ is a $C^1$-diffeomorphism in the neighborhood of $m^\dagger_{\eps,\eta}$. Therefore there exist positive constants $\delta_1 < \delta_2$ depending only on $\delta$ such that $B\left(G(m^\dagger_{\eps,\eta}),\delta_1\right) \subset G(B^{\delta,\eta}_{\eps}) \subset B\left(G(m^\dagger_{\eps,\eta}),\delta_2\right)$. After applying the transformation $x \mapsto H(x)$ in evaluation of the integral $I_\eps^{\delta,\eta}$, we get
$$
\tilde{I}^{\delta_1, \eta}_{\eps} 
\leq I_{i,\eps}^{\delta,\eta} \leq 
\tilde{I}^{\delta_2, \eta}_{\eps} 
$$
where 
$$
\tilde{I}^{\delta, \eta}_{\eps}  := \int_{B(0,\delta)} \exp\left(-\frac{1}{2\eps}|y|^2 - V_0 \circ H(y + G(m^\dagger_{\eps,\eta}))\right) \det(DH(y + G(m^\dagger_{\eps,\eta}))dy.
$$
In order to estimate $\tilde{I}^{\delta, \eta}_{\eps}$, in $B(0, \delta)$ with some small $\delta$ we define two auxiliary functions by setting
$$
f_{\eps, \eta}(\cdot) := \exp(-V_0\circ H(\cdot + G(m^\dagger_{\eps,\eta}))) \det(DH(\cdot + G(m^\dagger_{\eps,\eta}))$$
and 
$$
L(\cdot) := \exp(-V_0\circ H(G(\cdot))) \det(DH(G(\cdot))  = \exp(-V_0(\cdot))/ \det(DG(\cdot)).
$$ 
It is worthy to note that within the ball $B(0,\delta)$, all derivatives up to second order of $f_{\eps, \eta}$ as well as of $L$ can be bounded uniformly with respect to sufficiently small $\eps$ and $\eta$ such that $|\eta| \leq \eps^{-\gamma}$.
Taking the equation \cref{eq:mdagger1} into account, we can expand $L$ near $m^\dagger$ to get that
\be\label{eq:feps-eta1}
\bea
& f_{\eps, \eta}(0)  = L (m^\dagger_{\eps,\eta}) \\
& = L(x^\dagger) + \nabla L(x^\dagger)^T (m^\dagger_{\eps,\eta} - x^\dagger) + \frac{1}{2}(m^\dagger_{\eps,\eta} - x^\dagger)^T\nabla^2 L(\theta x^\dagger + (1- \theta)m^\dagger_{\eps,\eta})(m^\dagger_{\eps,\eta} - x^\dagger) \\
& = \frac{\exp(-V_0(x^\dagger) )}{\det(DG(x^\dagger))}+ \eps^{\frac{1}{2}}\nabla L(x^\dagger)^T  DG(x^\dagger)^{-1}\eta + r_{1, \eps,\eta}
\ena
\en
with some $\theta \in (0,1)$ and the residual $|r_{1, \eps, \eta}| \leq C \eps |\eta|^2$ for some $C> 0$. 
Moreover, for any $y\in B(0, \delta)$,
\be\label{eq:feps-eta2}
f_{\eps, \eta}(y) = f_{\eps, \eta} (0) + \nabla f_{\eps, \eta}(0)^T y + \frac{1}{2}y^T \nabla^2 f_{\eps, \eta}(\xi y) y
\en
for some $\xi  = \xi(y)\in (0,1)$. Then it follows from \cref{eq:feps-eta1} and \cref{eq:feps-eta2} that
\be
\bea
\tilde{I}^{\delta, \eta}_{\eps} &= \int_{B(0,\delta)}  \exp(-\frac{1}{2\eps}|y|^2) f_{\eps,\eta}(y) dy\\
& = \eps^{\frac{d}{2}}\int_{B(0,\eps^{-\frac{1}{2}}\delta)} \exp(-\frac{1}{2}|y|^2) f_{\eps, \eta}(\eps^{\frac{1}{2}}y) dy\\
& = \eps^{\frac{d}{2}}\left(f_{\eps, \eta}(0) \int_{B(0, \eps^{-\frac{1}{2}}\delta)} \exp(-\frac{1}{2} |y|^2) dy + \frac{\eps}{2} \int_{B(0,\eps^{-\frac{1}{2}}\delta)} \exp(-\frac{1}{2}|y|^2) y^T \nabla^2 f_{\eps, \eta}(\xi y) y dy \right)\\
& = (2\pi\eps)^{\frac{d}{2}} \left(\frac{\exp(-V_0(x^\dagger))}{\det(DG(x^\dagger))} +  \nabla L(x^\dagger)^T DG(x^\dagger)^{-1} \sqrt{\eps}\eta + r_{2,\eps, \eta} \right)
\ena
\en
with $|r_{2,\eps, \eta}| \leq C\eps |\eta|^2$. Notice that the linear term in the expansion \cref{eq:feps-eta2} vanishes from the second line to the third line because the region of integration is symmetric with respect to the origin; the final equality holds because we have counted the exponentially decaying Gaussian integral outside of the ball $B(0, \eps^{-\frac{1}{2}}\delta)$ in the residual $r_{2,\eps, \eta}$. Hence we obtain that for $|\eta| \leq \eps^{-\gamma}$ and $\eps$ small enough
$$
I^\delta_{\eps, \eta} = (2\pi\eps)^{\frac{d}{2}} \left(\frac{\exp(-V_0(x^\dagger))}{\det(DG(x^\dagger))} +  \eps^{\frac{1}{2}}\nabla L(x^\dagger)^T DG(x^\dagger)^{-1} \eta + r_{2,\eps, \eta} \right)
$$
with $|r_{2,\eps, \eta}| \leq C\eps |\eta|^2$. As a result, $Z_{\mu,\eps}^{\eta}$ satisfies the same bound as above. Then by using the Taylor expansion of the log function, one obtains that
$$
\log Z^\eta_{\mu,\eps} = \log\left( \frac{(2\pi\eps)^{\frac{d}{2}}\exp(-V_0(x^\dagger))}{\det(DG(x^\dagger))}\right) + \eps^{\frac{1}{2}} p^T \eta + r_{3, \eps, \eta}
$$
where $p$ is vector depending only on $L, G, V_0$ and $x^\dagger$ and $|r_{3,\eps, \eta}| \leq C\eps |\eta|^2$. This implies that when $\eps$ is sufficiently small,
$$
I_1 = \E^{\eta}\left( \log Z_{\mu,\eps}^\eta \mathbf{1}_{|\eta| \leq \eps^{-\gamma}}\right)  =\frac{d}{2} \log (2\pi\eps) -V_0(x^\dagger) + \log \det DG(x^\dagger)+ r_\eps.
$$
with $|r_\eps| \leq C\eps$. Again the first order term $\eps^{\frac{1}{2}}p^T\eta$ vanishes because of the symmetry of the integration region; the bound $|r_\eps| \leq C\eps$ follows from the bound for $r_{3,\eps,\eta}$ and the Gaussian tail bound. This completes the proof.
\end{proof}

\begin{proof}[Proof of \cref{thm:expdkl}]
We prove the theorem by constructing a family of Gaussian measures $\{\overline{\nu}_\eps^\eta\}$ such that 
\be\label{eq:expdkl-1}
\E^{\eta} D_{\text{KL}}(\overline{\nu}_\eps^\eta || \mu_\eps^\eta) \leq C\eps
\en 
for some $C > 0$. Then the theorem is proved by the optimality of $\nu_{\eps, \eta}$. Recall that $m^\dagger_{\eps, \eta}$ is defined by \cref{eq:mdagger0}. Fixing $\gamma\in (0,\frac{1}{2})$, we define $\overline{\nu}_\eps^\eta = N(\overline{m}_{\eps, \eta}, \overline{\bSig}_{\eps, \eta})$ with $\overline{m}_{\eps, \eta}$ defined by
$$
\overline{m}_{\eps, \eta} = \begin{cases}
m^\dagger_{\eps, \eta} & \text{ if } |\eta| \leq \eps^{-\gamma},\\
x^\dagger & \text{ otherwise }
\end{cases}
$$
and that $\overline{\bSig}_{\eps,\eta} =  \big(DG(\overline{m}_{\eps, \eta})^TDG(\overline{m}_{\eps, \eta})\big)^{-1}$. Clearly, 
when $\eps$ is small enough, $\overline{m}_{\eps, \eta}$ admits an expansion 
similar to \cref{eq:mdagger0}. As a consequence, there exist positive constants $C_1, c_2, C_2$ which are independent of $\eta$, such that $\limsup_{\eps \lgt 0} |\overline{m}_{\eps, \eta}| \leq C_1$ and $c_2 \leq \liminf_{\eps \lgt 0} \lambda_{\text{min}} (\overline{\bSig}_{\eps,\eta}) < \limsup_{\eps \lgt 0} \Tr(\bSig_\eps) \leq C_2$ hold for all $\eta$. With the above choice for $(\overline{m}_{\eps, \eta}, \overline{\bSig}_{\eps,\eta})$, an application of \cref{lem:dkl-asym2} with $V_1^\eps(x) = \frac{1}{2}|G(x^\dagger) - G(x) + \sqrt{\eps} \eta|^2$ and $V_2(x) = V_0(x)$ yields that
\be\label{eq:expdkl2}
D_{\text{KL}}(\overline{\nu}_\eps^\eta || \overline{\mu}_\eps^\eta) = V_0(\overline{m}_{\eps, \eta}) -\frac{d}{2}\log (2\pi\eps) + \log \det DG(\overline{m}_{\eps, \eta})+ \log Z_{\mu, \eps}^{\eta} + r_\eps
\en
where $r_\eps \leq C\eps$ with $C = C(C_1,c_2, C_2, M_V)$. By the definition of $\overline{m}_{\eps, \eta}$ and the expansion \cref{eq:mdagger0}, it follows from the Taylor expansion for the function $x \mapsto V_0(x) + \frac{1}{2} \log \det DG(x)$ that when $|\eta| \leq \eps^{-\gamma}$ and $\eps$ is small enough,
\be\label{eq:rhologdet}
V_0(\overline{m}_{\eps, \eta}) + \log \det DG(\overline{m}_{\eps, \eta}) = V_0(x^\dagger) + \log \det DG(x^\dagger) + \sqrt{\eps} q^T \eta + \tilde{r}_{\eps,\eta}
\en
with some $q\in \R^d$ and $|\tilde{r}_{\eps,\eta}| \leq C\eps$ for some $C > 0$. Then the estimate \cref{eq:expdkl-1} follows, by taking the expectation of \cref{eq:expdkl2} and using the equation \cref{eq:rhologdet} and \cref{lem:dkl-asym2}. 
\end{proof}

\begin{remark}\label{rem:expdkl}
\cref{thm:expdkl} proves the rate of convergence with the assumption that $G$ satisfies \cref{assum-bip} (i). However, the convergence rate remains the same when \cref{assum-bip} (ii) is fulfilled, and when the best Gaussian measure is replaced by the best Gaussian mixture. 
\end{remark}

\subsection{Comparison with Classical BvM Results}

We would like to make comparisons between our BvM result for Bayesian inverse problems and classical finite dimensional BvM results for general statistical models \cite{GR03, HM76}.

\begin{itemize}
\item {\bf Assumption.} In the classical framework of Bayesian inferences,  the posterior converges to a Gaussian in the total variation distance (with optimal rate) under the typical assumption that the likelihood function is $C^3$ and that the Fisher information matrix is non-degenerate; see e.g. \cite[Theorem 1.4.2]{GR03} and \cite[Section 4]{HM76}. The asymptotic covariance of the limiting Gaussian is given by the inverse of the Fisher information matrix. In the Bayesian inverse problem setting, we improve the convergence to the stronger sense of KL-divergence, but at the expense of requiring higher differentiability ($C^4$) on the forward map $G$. Moreover, the matrix product $DG^T DG$ takes the place of the Fisher information matrix in the asymptotic covariance, where $DG$ is invertible because of \cref{assum-bip}.

\item {\bf Multimodal Distribution.} The proposed KL-approximation framework allows us to prove the convergence of a multimodal probability measure to a mixture of Gaussian measures. The limiting KL-discrepency between the target measure and the Gaussian approximation is characterized explicitly as a sum of two relative entropies, see \cref{thm:gamma-2}. In addition, in this case the prior does not disappear in the limit and its influence on the posterior is reflected in the weighted coefficients in the Gaussian mixture. 
 To the best of our knowledge, such results
have not been stated in the statistical literature. 

\item {\bf Proof.} Both our proof and classical proofs for the finite dimensional BvM theorems are essentially based on the local Taylor expansion of the posterior around the truth. But the proofs are carried out in different ways.  Classical BvM results in the TV-distance are usually proved by first expanding the posterior density around the maximum likelihood estimator (MLE), which requires tracking the normalization constant,  and then applying the local asymptotic normality of MLE and LeCam's contiguity arguments to obtain the convergence of the posterior. Our proof, instead, takes advantage of the special formulation of the KL-divergence, i.e. the separation of the normalization constant from the log density, thereby  reducing the convergence proof to establishing
precise estimates on the normalization constant (see \cref{lem:expnorm}). 

\end{itemize}

\section{Conclusions}\label{sec:conclusion}

We have studied a methodology widely used in applications, yet little analyzed,
namely the approximation of a given target measure by a Gaussian, or by
a Gaussian mixture. We have employed relative entropy as a measure of goodness
of fit. Our theoretical framework demonstrates the existence of
minimizers of the variational problem, and studies their asymptotic
form in a relevant small parameter limit where the measure concentrates;
the small parameter limit is studied by use of tools from $\Gamma$-convergence.
In the case of a target with asymptotically unimodal distribution the $\Gamma$-limit
demonstrates perfect reconstruction by the approximate single Gaussian method in
the measure concentration limit; 
and in the case of multiple modes it quantifies the errors resulting
from using a single mode fit. Furthermore the Gaussian mixture is shown to
overcome the limitations of a single mode fit, in the case of target measure
with multiple modes. These ideas are exemplified in the analysis of
a Bayesian inverse problem in the small noise or large data set limits,
and connections made to the Bernstein-von Mises theory from asymptotic
statistics.

The BvM theorem of this paper is essentially still parametric. 
A natural interesting future direction would be to study 
infinite-dimensional statistical models \cite{nickl}. 
In particular it would be interesting to apply our measure approximation approach from $\Gamma$-convergence to understand the BvM phenomenon of infinite dimensional non-linear Bayesian inverse problems. In our finite dimensional setting, the inverse problem of interest is 
essentially well-posed since we assume that both $G$ and $DG$ are invertible, so the only ill-posedness comes from the lack of uniqueness. However, for infinite dimensional inverse problems, the degree of ill-posedness (mild/severe) has a big influence on the precise statement of the BvM theorem. Understanding of this issue requires delicate quantitative stability estimates for the underlying inverse problem.  The recent paper \cite{lu2017bernstein} proved a BvM result for high dimensional non-linear inverse problems where dimension of the unknown parameter increases with the decreasing noise level. However, it remains an open problem whether the BvM theorem holds for genuinely infinite dimensional non-linear inverse problems. We will address this problem in future work.

\bibliographystyle{siamplain}
\bibliography{klsiamuq}
\end{document}